\newtheorem{assumption}{Assumption}
\newcommand{\R}{\mathbb{R}}
\newcommand{\Rext}{\R\cup\{+\infty\}}
\newcommand{\set}[1]{\left\{#1\right\}}
\newcommand{\sets}[1]{\{#1\}}
\newcommand{\norm}[1]{\left\Vert#1\right\Vert}
\newcommand{\norms}[1]{\Vert#1\Vert}
\newcommand{\Eproof}{\hfill $\square$}
\newcommand{\prox}{\mathrm{prox}}
\newcommand{\tprox}[2]{\mathrm{prox}_{#1}\left(#2\right)}
\newcommand{\proj}{\mathrm{proj}}
\newcommand{\argmin}{\mathrm{arg}\!\displaystyle\min}
\newcommand{\dom}[1]{\mathrm{dom}(#1)}
\newcommand{\iprods}[1]{\langle #1\rangle}
\newcommand{\Exp}[1]{\mathbb{E}\left[#1\right]}
\newcommand{\Exps}[2]{\mathbb{E}_{#1}\left[#2\right]}
\newcommand{\Probn}{\mathbb{P}}
\newcommand{\Prob}[1]{\mathbb{P}\left(#1\right)}
\newcommand{\pb}{\mathbf{p}}
\newcommand{\Bc}{\mathcal{B}}
\newcommand{\Xc}{\mathcal{X}}
\newcommand{\Sc}{\mathcal{S}}
\newcommand{\Tc}{\mathcal{T}}
\newcommand{\Rc}{\mathcal{R}}
\newcommand{\Fc}{\mathcal{F}}
\newcommand{\Nbb}{\mathbb{N}}
\newcommand{\trace}[1]{\mathrm{trace}\left(#1\right)}
\newcommand{\BigO}[1]{\mathcal{O}\left(#1\right)}
\newcommand{\Uni}[1]{\mathbb{U}\left(#1\right)}
\newcommand{\Unip}[2]{\mathbb{U}_{#1}\left(#2\right)}
\newcommand{\Grad}{\mathcal{G}}
\newcommand{\bsigma}{\boldsymbol{\sigma}}
\newcommand{\cmark}{\ding{51}}%
\newcommand{\xmark}{\ding{55}}%
\newcommand{\done}{\rlap{$\square$}{\raisebox{2pt}{\large\hspace{1pt}\cmark}}%
\hspace{-2.5pt}}
\newcommand{\wontfix}{\rlap{$\square$}{\large\hspace{1pt}\xmark}}
\newcommand{\myeq}[2]{\vspace{-0.25ex}
\begin{equation}\label{#1}
{#2}
\vspace{-0.25ex}
\end{equation}
}
\newcommand{\myeqn}[1]{\vspace{-0.25ex}
\begin{equation*}
{#1}
\vspace{-0.25ex}
\end{equation*}
}
\newcommand{\mytxtbi}[1]{\textbf{\textit{#1}}}
\newcommand{\mytxtit}[1]{\textit{#1}}
\newcommand{\nhan}[1]{{#1}}
\newcommand{\update}[1]{{#1}}
\newcommand{\revise}[1]{#1}
\newcommand{\beforesec}{\vspace{-3.5ex}}
\newcommand{\aftersec}{\vspace{-2.25ex}}
\newcommand{\beforesubsec}{\vspace{-4ex}}
\newcommand{\aftersubsec}{\vspace{-2.5ex}}
\newcommand{\beforesubsubsec}{\vspace{-2.5ex}}
\newcommand{\aftersubsubsec}{\vspace{-2.5ex}}
\newcommand{\beforepara}{\vspace{-2.5ex}}
\begin{document}

\title{A Hybrid Stochastic Optimization Framework for Composite Nonconvex Optimization}

\titlerunning{A Hybrid Stochastic Optimization Framework for Composite Nonconvex Optimization}        

\author{Quoc Tran-Dinh$^{*}$ \and Nhan H. Pham \and Dzung T. Phan \and Lam M. Nguyen}
\authorrunning{Q. Tran-Dinh, N. H. Pham, D. T. Phan, \textit{and} L. M. Nguyen}


\institute{Quoc Tran-Dinh \and  Nhan H. Pham \at
		Department of Statistics and Operations Research\\
		The University of North Carolina at Chapel Hill, 318 Hanes Hall, UNC-Chapel Hill, NC 27599-3260.\\ 
		Email: \url{quoctd@email.unc.edu}, \url{nhanph@live.unc.edu}.
\and
Dzung T. Phan \and Lam M. Nguyen \at
IBM Research, Thomas J. Watson Research Center \\ Yorktown Heights, NY10598, USA.\\
Email: \url{phandu@us.ibm.com}, \url{lamnguyen.mltd@ibm.com}.
\and
$^{*}$Corresponding author.
}

\date{Version 4: The first manuscript  was posted on Arxiv on March 13, 2019}

\maketitle

\vspace{-3ex}
\begin{abstract}
We introduce a new approach to develop stochastic optimization algorithms for a class of stochastic composite and possibly nonconvex optimization problems.
The main idea is to combine two stochastic estimators to create a new hybrid one.  
We first introduce our hybrid estimator and then investigate its fundamental properties to form a foundational theory for algorithmic development.  
Next, we apply our theory to develop several variants of stochastic gradient methods to solve both expectation and finite-sum composite optimization problems. 
Our first algorithm can be viewed as a variant of proximal stochastic gradient methods with a single-loop, but can achieve $\BigO{\sigma^3\varepsilon^{-1} + \sigma \varepsilon^{-3}}$-oracle complexity bound, matching the best-known ones from state-of-the-art double-loop algorithms in the literature, where \revise{$\sigma > 0$} is the variance and $\varepsilon$ is a desired accuracy.
Then, we consider two different variants of our method: adaptive step-size and restarting schemes that have similar theoretical guarantees as in our first algorithm.
We also study two mini-batch variants of the proposed methods.  
In all cases, we achieve the best-known complexity bounds under standard assumptions.
We test our methods on several numerical examples with real datasets and compare them with state-of-the-arts.
Our numerical experiments show that the new methods are comparable and, in many cases, outperform their competitors.
\end{abstract}

\keywords{
Hybrid stochastic estimator \and
stochastic optimization algorithm \and
oracle complexity \and
variance reduction \and
composite nonconvex optimization.
}
\subclass{90C25   \and 90-08}

\beforesec
\section{Introduction}\label{sec:intro}
\aftersec
In this paper, we consider the following composite and possibly nonconvex optimization problem, which is widely studied in the literature:
\myeq{eq:ncvx_prob}{
\min_{x\in\R^p}\Big\{ F(x) := f(x) + \psi(x) \equiv \Exps{\xi\sim\Probn}{f_{\xi}(x)} + \psi(x) \Big\},
}
where $f_{\xi}(\cdot) : \R^p\times \Omega \to \R$ is a stochastic function such that for each $x\in\R^p$, $f_{\xi}(x)$ is a random variable in a given probability space $(\Omega, \Probn)$,  while for each realization $\xi\in\Omega$,  $f_{\xi}(\cdot)$ is smooth on $\R^p$; $f(x) := \Exps{\xi\sim\Probn}{f_{\xi}(x)} = \int_{\Omega}f_{\xi}(x)d\Probn{(\xi)}$ is the expected value of the random function $f_{\xi}(x)$ over $\Omega$; and $\psi : \R^p\to\Rext$ is a proper, closed, and convex function.

In addition to \eqref{eq:ncvx_prob}, we also consider the following composite finite-sum problem:
\myeq{eq:finite_sum}{
\min_{x\in\R^p}\set{ F(x) := f(x) + \psi(x) \equiv \frac{1}{n}\sum_{i=1}^nf_i(x) + \psi(x) },
}
where $f_i : \R^p\to\R$ for $i=1,\cdots, n$ are all smooth functions.
Problem \eqref{eq:finite_sum} can be considered as a special case of \eqref{eq:ncvx_prob} when $\Omega$ is finite, i.e., $\Omega := \set{\xi_1, \xi_2, \cdots, \xi_n}$ and $f_i(x) := n\Prob{\xi=\xi_i}f_{\xi_i}(x)$.
Alternatively, \eqref{eq:finite_sum} can be viewed as a stochastic average approximation of \eqref{eq:ncvx_prob}.
If $n$ is extremely large such that evaluating the full gradient $\nabla{f}(x)$ and the function value $f(x)$ in \eqref{eq:finite_sum} is expensive, then, as usual, we refer to this setting as an online model.

If the regularizer $\psi$ is absent, then we obtain a smooth problem which has been widely studied in the literature.
As another special case, if $\psi$ is the indicator of a nonempty, closed, and convex set $\Xc$, i.e., $\psi := \delta_{\Xc}$, then \eqref{eq:ncvx_prob} also covers constrained nonconvex optimization problems.
In this paper, we do not make any assumption on $\psi$ except for convexity.

\beforesubsec
\subsection{\bf  Our goals, approach, and contribution}
\aftersubsec
\noindent\mytxtbi{Our goals:} Our goal is to develop a new approach to approximate a stationary point of \eqref{eq:ncvx_prob} and its finite-sum setting \eqref{eq:finite_sum} under standard assumptions  used in existing methods.
In this paper, we only focus on Stochastic Gradient Descent-type (SGD) algorithms.
We are also interested in both oracle complexity bounds and implementation aspects.
The ultimate goal is to design simple algorithms (e.g., with a single loop) that are easy to implement and require less parameter tuning effort.

\noindent\mytxtbi{Our approach:}
Our approach relies on a so-called ``hybrid'' idea which merges two existing stochastic estimators through a convex combination to design a ``hybrid'' offspring that inherits the advantages of its underlying estimators.
We will focus on the hybrid estimators formed from the SARAH (StochAstic Recursive grAdient algoritHm) estimator introduced in \cite{nguyen2017sarah} and any given unbiased estimator such as SGD \cite{RM1951}, SVRG (Stochastic Variance Reduced Gradient) \cite{johnson2013accelerating}, or SAGA (stochastic incremental gradient)\cite{Defazio2014}.
For the sake of presentation, we only focus on the SGD estimator in this paper, but our idea can be extended to any unbiased estimator.
We emphasize that our method is fundamentally different from momentum or  exponential moving average-type methods such as in \cite{Cutkosky2019,Kingma2014} where we use two independent estimators instead of a combination of the past  and the current estimators.

While our hybrid estimators are biased, fortunately, they possess some useful properties to develop new stochastic optimization algorithms.
One important attribute is the variance reduced property which often allows us to derive larger or constant step-size in stochastic methods.
Whereas a majority of stochastic algorithms rely on unbiased estimators such as SGD, SVRG, and SAGA, interestingly, recent evidence has shown that biased estimators such as SARAH, biased SAGA, or biased SVRG estimators  also provide comparable or even better algorithms in terms of oracle complexity bounds as well as empirical performance, see, e.g., \cite{driggs2019bias,fang2018spider,Nguyen2019_SARAH,Pham2019,wang2018spiderboost}.

Our approach, on the one hand, can be extended to study second-order methods such as cubic regularization and subsampled schemes as in \cite{bollapragada2016exact,erdogdu2015convergence,Roosta-Khorasani2016,wang2018stochastic,zhang2018adaptive,zhou2019stochastic}.
The main idea is to exploit hybrid estimators to approximate both gradient and Hessian of the objective function similar to \cite{wang2018stochastic,zhang2018adaptive,zhou2019stochastic}.
On the other hand, it can be applied to approximate a second-order stationary point of  \eqref{eq:ncvx_prob} and  \eqref{eq:finite_sum}.
The idea is to integrate our methods with a negative curvature search such as Oja's algorithm \cite{oja1982simplified} or Neon2 \cite{allen2018neon2}, or to employ perturbed/noise gradient techniques such as \cite{fang2019sharp,ge2019stabilized,Li2019a} to approximate a second-order stationary point.

\noindent\mytxtbi{Our contribution:}
To this end, our contribution can be summarized as follows:
\begin{compactitem}
\item[(a)] 
We first introduce a ``hybrid'' approach to merge two stochastic estimators in order to form a new one.
Such a new estimator can be viewed as a convex combination of a biased estimator and an unbiased one to inherit the advantages of its underlying estimators.
Although we only focus on a convex combination between SARAH \cite{nguyen2017sarah} and SGD \cite{RM1951}  estimator, our approach can be extended to cover other possibilities such as SVRG \cite{johnson2013accelerating} or SAGA \cite{Defazio2014}.
Given such a new hybrid estimator, we develop several fundamental properties that can be useful for developing new stochastic optimization algorithms.

\item[(b)] 
Next, we employ our new hybrid SARAH-SGD estimator to develop a novel stochastic proximal gradient algorithm, Algorithm~\ref{alg:A1}, to solve \eqref{eq:ncvx_prob}.
\revise{This algorithm has a single-loop, and if the variance $\sigma > 0$, then it achieves $\BigO{\sigma^3\varepsilon^{-1} + \sigma\varepsilon^{-3}}$-oracle complexity bound.
When $\sigma = 0$ (i.e., no randomness involved in \eqref{eq:ncvx_prob}), its complexity bound reduces to $\BigO{\varepsilon^{-2}}$ as in the deterministic setting.}
To the best of our knowledge, this is the \mytxtbi{first variant} of proximal SGD methods that achieves such an oracle complexity bound without using double loop or check-points as in SVRG or SARAH, or requiring an $n\times p$-table to store gradient components as in SAGA-type algorithms.

\item[(c)] 
Then, we derive two different variants of Algorithm~\ref{alg:A1}: adaptive step-size and restarting schemes.
Both variants have similar complexity bounds as of Algorithm~\ref{alg:A1}.
We also propose a mini-batch variant of Algorithm~\ref{alg:A1} and provide a trade-off analysis between mini-batch sizes and the choice of step-sizes to obtain better practical performance.
\end{compactitem}

Let us emphasize the following additional points of our contribution.
Firstly, the new algorithm, Algorithm~\ref{alg:A1}, is rather different from existing SGD methods.
It first forms a mini-batch stochastic gradient estimator at a given initial point to provide a good approximation to the initial gradient of $f$.
Then, it performs a single loop to update the iterate sequence which consists of two steps: proximal-gradient step and averaging step, where our hybrid estimator is used.
The algorithm therefore has two step-sizes to be updated.

Secondly, our methods work with both single-sample and mini-batch cases, and achieve the best-known complexity bounds in both cases. 
This is different from some existing methods such as SVRG-type  \cite{reddi2016proximal}, SpiderBoost  \cite{wang2018spiderboost}, \revise{and SNVRG \cite{zhou2018stochastic}} that only achieve the best complexity under certain choices of parameters.
Our methods  are also flexible to choose different mini-batch sizes for the hybrid components to achieve different complexity bounds and to adjust the performance.
For instance, in Algorithm~\ref{alg:A1}, we can choose single sample in the SARAH estimator while using a mini-batch in the SGD estimator or vice versa that leads to different trade-off on the choice of the weight as well as the step-sizes.

Finally, our theoretical results on hybrid estimators are also self-contained and independent.
As we have mentioned, they can be used to develop other stochastic algorithms such as second-order methods or perturbed SGD schemes.
We believe that they can also be used in other problems such as compositional and constrained optimization \cite{davis2017proximally,mokhtari2018escaping,wang2017stochastic}.

\beforesubsec
\subsection{\bf  Related work}
\aftersubsec
Both problems \eqref{eq:ncvx_prob} and \eqref{eq:finite_sum} have been widely studied in the literature for both convex and nonconvex models, see, e.g., \cite{bottou2010large,Bottou1998,Defazio2014,ghadimi2013stochastic,johnson2013accelerating,mairal2015incremental,Nemirovski2009,nguyen2017sarah,RM1951,schmidt2017minimizing}.
However, due to applications in deep learning, large-scale nonconvex optimization problems have attracted huge attention in recent years \cite{goodfellow2016deep,lecun2015deep,Unser2019}.
Numerical methods for solving these problems heavily rely on two approaches: deterministic and stochastic approaches, ranging from first-order to second-order methods.
Notable first-order methods include stochastic gradient-type, conditional gradient \cite{reddi2016stochastic}, incremental gradient \cite{Bertsekas2011}, and primal-dual schemes \cite{chambolle2017stochastic}.
In contrast, advanced second-order methods consist of quasi-Newton, trust-region, sketching Newton, subsampled Newton, and cubic regularized Newton-based methods, see, e.g., \cite{byrd2016stochastic,Nesterov2006a,pilanci2015newton,Roosta-Khorasani2016}.

In terms of stochastic first-order algorithms, there has been a tremendously increasing trend in stochastic gradient methods and their variants in the last fifteen years.
SGD-based algorithms can be classified into two categories: non-variance reduction and variance reduction schemes.
The classical SGD method was studied in the early work of Robbins and Monro \cite{RM1951}, and then, e.g., in \cite{Polyak1992} with an accelerated variant via averaging steps, but its convergence rate was then investigated in \cite{Nemirovski2009} under new robust variants.
Ghadimi and Lan extended SGD to nonconvex settings and  analyzed its complexity in \cite{ghadimi2013stochastic}.
Other extensions of SGD can be found, e.g., in \cite{allen2017natasha,davis2017proximally,AdaGrad,ge2015escaping,ghadimi2016accelerated,jofre2019variance,Kingma2014,Konecny2016,Moulines2011,Nguyen2018a,Polyak1992}.

Alternatively, variance reduction-based methods have been intensively studied in recent years for both convex and nonconvex settings.
Apart from mini-batch and importance sampling schemes \cite{ghadimi2016mini,zhao2015stochastic}, the following methods are the most notable.
The first class of algorithms is based on SAG estimator \cite{schmidt2017minimizing}, including SAGA-variants \cite{Defazio2014}.
The second one is SVRG \cite{johnson2013accelerating} and its variants such as Katyusha \cite{allen2016katyusha}, MiG \cite{zhou2018simple}, and many others \cite{li2018simple,reddi2016proximal}.
The third class relies on SARAH \cite{nguyen2017sarah} such as SPIDER \cite{fang2018spider}, SpiderBoost \cite{wang2018spiderboost}, ProxSARAH \cite{Pham2019}, and momentum variants  \cite{zhou2019momentum}.
\revise{The fourth idea is SNVRG \cite{zhou2018stochastic}, which combines different techniques such as nested variance reduction and sampling without replacement.}
Other approaches such as Catalyst \cite{lin2015universal},  SDCA \cite{shalev2013stochastic}, and SEGA \cite{hanzely2018sega} have also been proposed.
These algorithms often require stronger assumptions than SGDs.

In terms of theory, many researchers have focused on theoretical aspects of existing algorithms.
For example, \cite{ghadimi2013stochastic} appeared as one of the first remarkable works studying convergence rates of stochastic gradient-type methods for nonconvex and non-composite finite-sum problems.
They later extended it to the composite setting in \cite{ghadimi2016mini}.
The authors of \cite{wang2018spiderboost}  also investigated the gradient dominant case, and \cite{karimi2016linear} considered both finite-sum and composite finite-sum problems under different assumptions.
Whereas many researchers have been trying to improve complexity upper bounds of stochastic first-order methods using different techniques \cite{allen2017natasha2,allen2018neon2,Allen-Zhu2016,fang2018spider},
other works have attempted to  construct examples to establish lower-bound complexity barriers.
The upper oracle complexity bounds have been substantially improved among these works and some results have matched the lower bound complexity in both convex and nonconvex settings \cite{allen2017natasha,allen2017natasha2,arjevani2019lower,fang2018spider,ghadimi2013stochastic,li2018simple,lei2017non,Pham2019,reddi2016proximal,wang2018spiderboost,zhou2018stochastic}.
We refer to Table~\ref{tbl:SFO_compare} for some notable examples of stochastic gradient-type methods for solving \eqref{eq:ncvx_prob} and \eqref{eq:finite_sum} and their non-composite settings.
In fact, \cite{lei2017non} and \cite{zhou2018stochastic} only study the finite-sum problem with an additional bounded variance assumption, but allow the variance to go to infinite.

\begin{table}[hpt!]
\newcommand{\cellb}[1]{{\!\!}{\color{blue}#1}{\!\!}}
\newcommand{\cellr}[1]{{\!\!}{\color{red}#1}{\!\!}}
\newcommand{\cell}[1]{{\!\!\!}#1{\!\!\!}}
\begin{center}
\resizebox{\textwidth}{!}{
\begin{tabular}{l | c | c | c | c}\toprule
\cell{~~~\textbf{Algorithms}} & \cell{\bf Expectation} &  \cell{\bf Finite-sum} & \cell{\bf Composite} & \cell{\bf Type} \\ \toprule
\cell{GD \cite{Nesterov2004}} & NA & \cell{$\BigO{n\varepsilon^{-2}}$} & \cell{\done}   & \cell{Single} \\ \midrule
\cell{SGD \cite{ghadimi2013stochastic}} & \cell{$\BigO{\sigma^2\varepsilon^{-4}}$} & NA & \cell{\done}   & \cell{Single} \\ \midrule
\cell{SAGA \cite{reddi2016proximal}} &  NA & $\BigO{n + n^{2/3}\varepsilon^{-2}}$ & \cell{\done}    & \cell{Single$^{*}$}  \\ \midrule
\cell{SVRG  \cite{reddi2016proximal}} &  NA & $\BigO{n + n^{2/3}\varepsilon^{-2}}$ & \cell{\done}   & \cell{Double}  \\ \midrule
\cell{SVRG+ \cite{li2018simple}} & \cell{$\BigO{\sigma^2\varepsilon^{-10/3}}$}  & \cell{$\BigO{n+n^{2/3}\varepsilon^{-2}}$} & \cell{\done}    & \cell{Double}  \\ \midrule
\cell{SCSG \cite{lei2017non}} & \cell{N/A} & \cell{$\BigO{\left( \frac{\sigma^2}{\varepsilon^2} \wedge n \right) + \frac{1}{\varepsilon^2} \left( \frac{\sigma^2}{\varepsilon^2} \wedge n \right)^{2/3}}$} & \cell{\wontfix}    & \cell{Double}  \\ \midrule
\cell{SNVRG \cite{zhou2018stochastic}} & \cell{N/A} & \cell{$\BigO{\log^3\left( \frac{\sigma^2}{\varepsilon^2} \wedge n \right) \left[ \left( \frac{\sigma^2}{\varepsilon^2} \wedge n \right) + \frac{1}{\varepsilon^2} \left( \frac{\sigma^2}{\varepsilon^2} \wedge n \right)^{1/2} \right] }$} & \cell{\wontfix}  & \cell{Double}  \\ \midrule
\cell{SPIDER \cite{fang2018spider}} & \cell{$\BigO{\sigma^2\varepsilon^{-2}+\sigma\varepsilon^{-3}}$} &  \cell{$\BigO{n+n^{1/2}\varepsilon^{-2}}$} & \cell{\wontfix}   &  \cell{Double} \\ \midrule
\cell{SpiderBoost \cite{wang2018spiderboost}} &  \cell{$\BigO{\sigma^2\varepsilon^{-2}+\sigma\varepsilon^{-3}}$} & \cell{$\BigO{n+n^{1/2}\varepsilon^{-2}}$} & \cell{\done}   & \cell{Double} \\ \midrule
\cell{ProxSARAH \cite{Pham2019}} & \cell{$\BigO{\sigma^2\varepsilon^{-2} \vee \sigma\varepsilon^{-3}}$} &  \cell{$\BigO{n+n^{1/2}\varepsilon^{-2}}$} & \cell{\done}    & \cell{Double} \\ \midrule
\cellb{This paper} &  \cellb{$\BigO{\sigma^3\varepsilon^{-1} + \sigma\varepsilon^{-3}}$} & \cellb{$\BigO{n + \varepsilon^{-3}}$} & \cellb{\done}  &  \cellb{Single} \\ 
\bottomrule
\end{tabular}}
\caption{A comparison of stochastic first-order oracle complexity bounds and the type of algorithms for nonsmooth nonconvex optimization $($both non-composite and composite case$)$.
{\small
Here, $n$ is the number of data points and $\sigma$ is the variance in Assumption~\ref{as:A1b}, and ``single/double'' means that the algorithm uses single-loop or double-loop, respectively.
All the complexity bounds here must depend on the Lipschitz constant $L$ in Assumption~\ref{as:A1} and $F(x_0) - F^{\star}$, the difference between the initial objective value $F(x_0)$ and the lower-bound $F^{\star}$ in Assumption~\ref{as:A0}. 
We assume that $L = \BigO{1}$ and ignore these quantities in the complexity bounds.
\revise{In addition, we also assume that $\sigma > 0$ and dominates $L$ and $F(x_0) - F^{\star}$ to simplify the bounds.}
Note that SAGA is a single-loop method, but it requires a matrix of size $n\times p$ to store stochastic gradients~$(^{*})$.
}
}\label{tbl:SFO_compare}
\end{center}
\vspace{-6ex}
\end{table}

In the convex case, there exist numerous research papers including \cite{agarwal2010information,agarwal2015lower,carmon2017lower,foster2019complexity,Nemirovskii1983,Nesterov2004,woodworth2016tight} that study the lower bound complexity.
In \cite{fang2018spider,zhou2019lower}, the authors  constructed a lower-bound complexity for nonconvex finite-sum problems covered by \eqref{eq:finite_sum}.
They showed that the lower-bound complexity for any stochastic gradient method relied on only smoothness assumption to achieve an $\varepsilon$-stationary point in expectation  is $\Omega\left({n^{1/2}\varepsilon^{-2}}\right)$.
For the expectation problem \eqref{eq:ncvx_prob}, the best-known complexity bound to obtain an $\varepsilon$-stationary point in expectation is $\BigO{\sigma\varepsilon^{-3} + \sigma^2\varepsilon^{-2}}$ as shown in \cite{fang2018spider,wang2018spiderboost}, where \revise{$\sigma > 0$} is an upper bound of the variance (see Assumption~\ref{as:A1b}).
\revise{Very recently, \cite{allen2017natasha2} provides a study on lower-bound complexity for the non-composite and nonconvex setting of \eqref{eq:ncvx_prob} under different sets of assumptions.}

While stochastic algorithms for solving the non-composite setting, i.e., $\psi = 0$, are well-developed and have received considerable attention \cite{allen2017natasha2,allen2018neon2,Allen-Zhu2016,fang2018spider,lei2017non,Nguyen2018_iSARAH,Nguyen2019_SARAH,nguyen2017sarahnonconvex,reddi2016proximal,zhou2018stochastic}, methods for composite setting remain limited \cite{reddi2016proximal,wang2018spiderboost}.
In this paper, we will develop a novel approach to design stochastic optimization algorithms for solving the composite problems \eqref{eq:ncvx_prob} and \eqref{eq:finite_sum}.
Our approach is rather different from existing ones and we call it a ``hybrid'' approach.

\beforesubsec
\subsection{\bf Comparison}
\aftersubsec
Let us compare our algorithms and existing methods in the following aspects:
\beforepara
\paragraph{\mytxtbi{$\mathrm{(a)}$ Single-loop vs. multiple-loop:}}
As mentioned, we aim at developing practical methods that are easy to implement.
One major difference between our methods and existing state-of-the-arts is the algorithmic style: single-loop vs. multiple-loop.
As discussed in several works, including \cite{kovalev2019don}, single-loop methods have notable advantages over double-loop methods, including tuning parameters.
The single-loop style consists of SGD, SAGA, and their variants \cite{Defazio2014,defazio2014finito,ghadimi2013stochastic,Nemirovski2009,Polyak1992,RM1951,schmidt2017minimizing}, while the double-loop style comprises SVRG, SARAH, and their variants \cite{johnson2013accelerating,nguyen2017sarah}.
Other algorithms such as Natasha \cite{allen2017natasha} or Natasha1.5 \cite{allen2017natasha2} even have three loops.
Let us compare these methods in detail as follows:
\begin{compactitem}
\item[$\bullet$] SGD and SAGA-type methods have single-loop, but SAGA-type algorithms use an $n\times p$-matrix to maintain $n$ individual gradients which can be very large if $n$ and $p$ are large.
In addition, SAGA has not yet been applied to solve \eqref{eq:ncvx_prob}.
Our first algorithm, Algorithm~\ref{alg:A1}, has single-loop as SGD and SAGA, and does not require heavy memory storage.
However, to apply to \eqref{eq:finite_sum}, it still requires either an additional bounded variance condition (see \eqref{eq:L_smooth2}) compared to SAGA.
But if it solves \eqref{eq:ncvx_prob}, then it requires the same standard assumptions as used in many existing variance reduction methods.
In terms of complexity, Algorithm~\ref{alg:A1} is much better than SGD. 
But as a compensation, it uses a slightly stronger assumption: $L$-average smoothness in Assumption~\ref{as:A1} compared to SGD, which only requires the $L$-smoothness of the expected value function $f$. 
To the best of our knowledge, Algorithm~\ref{alg:A1} is the first single-loop SGD variant that achieves the best-known complexity.
Another related and concurrent work is \cite{Cutkosky2019}, which uses momentum approach, but requires additional bounded gradient assumption to achieve similar complexity as Algorithm~\ref{alg:A1}.

\item[$\bullet$] Algorithm~\ref{alg:A2} has double-loop similar to SVRG and SARAH-type methods.
While the double-loop in SVRG, SARAH, and their variants are required to achieve convergence, it is optional in Algorithm~\ref{alg:A2} as a restarting variant and no parameter tuning is needed.
Note that double-loop or multiple-loop methods require to tune more parameters such as epoch lengths and possibly the mini-batch size of the snapshot point.
In addition, the choice of the snapshot point also matters.
\end{compactitem}
\beforepara
\paragraph{\mytxtbi{$\mathrm{(b)}$ Single-sample and mini-batch:}}
Our methods work with both single sample and mini-batch, and in both cases, they achieve the best-known complexity bounds \cite{arjevani2019lower}.
This is different from some existing methods such as SVRG, SNVRG, or SARAH-based methods \cite{reddi2016proximal,wang2018spiderboost,zhou2018stochastic} where the best complexity is only obtained under an appropriate parameter selection.

\beforepara
\paragraph{\mytxtbi{$\mathrm{(c)}$ Oracle complexity bounds:}}
Algorithm~\ref{alg:A1} and its variants all achieve the best-known complexity bounds as in \cite{Pham2019,wang2018spiderboost} for solving \eqref{eq:ncvx_prob}.
In early works such as Natasha  \cite{allen2017natasha} and Natasha1.5 \cite{allen2017natasha2} which are based on the SVRG estimator, the best complexity is often $\BigO{\sigma^2\varepsilon^{-2}+\sigma\varepsilon^{-10/3}}$ for solving \eqref{eq:ncvx_prob} and $\BigO{n + n^{2/3}\varepsilon^{-2}}$ for solving \eqref{eq:finite_sum}.
By combining with additional sophisticated tricks, these complexity bounds are slightly improved.
For instance, Natasha \cite{allen2017natasha} or Natasha1.5 \cite{allen2017natasha2} can achieve  $\BigO{n + n^{2/3}\varepsilon^{-2}}$ in the finite-sum case and $\BigO{\varepsilon^{-3} + \sigma^{1/3}\varepsilon^{-10/3}}$ in the expectation case, but they require three loops with several parameter adjustment, which are difficult to tune in practice.
\revise{SNVRG \cite{zhou2018stochastic} exploits a nested variance reduction technique with dynamic epoch lengths as used in \cite{lei2017non} to improve its complexity bounds.
However,  \cite{zhou2018stochastic} only focuses on the non-composite finite-sum problems, and its final complexity bound is $\BigO{(n+n^{1/2}\varepsilon^{-2})\log^3(n)}$.}
Again, this method also requires complicated parameter selection procedure.
To achieve better complexity bounds, SARAH-based methods have been studied in \cite{fang2018spider,Nguyen2019_SARAH,Pham2019,wang2018spiderboost}.
Their oracle complexity meets  the lower-bound one (up to a constant factor), see \cite{arjevani2019lower,fang2018spider,Pham2019}.

\beforesubsec
\subsection{\bf Paper organization}
\aftersubsec
The rest of this paper is organized as follows.
Section~\ref{sec:basic_tools} discusses the main assumptions of our problems \eqref{eq:ncvx_prob} and \eqref{eq:finite_sum}, and their optimality conditions.
Section~\ref{sec:stochastic_estimators} develops new hybrid stochastic estimators and investigates their properties.
We consider both single-sample and mini-batch cases.
Section~\ref{sec:sgd_algs} studies a new class of hybrid gradient algorithms to solve both \eqref{eq:ncvx_prob} and \eqref{eq:finite_sum}.
We develop three different variants of hybrid algorithms and analyze their convergence and complexity estimates.
Section~\ref{sec:sgd_mini_batch} extends our algorithms to mini-batch cases.
Section~\ref{sec:num_experiments} gives several numerical examples and compares our methods with existing state-of-the-arts.
For the sake of presentation, many technical proofs are provided in the appendix.

\beforesec
\section{Basic Notations, Fundamental Assumptions, and Optimality Condition}\label{sec:basic_tools}
\aftersec
In this section, we first recall some basic notation and concepts.
Then, we state the fundamental assumptions imposed on  \eqref{eq:ncvx_prob} and \eqref{eq:finite_sum} and their optimality condition.
\beforesubsec
\subsection{\bf  Notations and basic concepts}
\aftersubsec
We work with the Euclidean spaces, $\R^p$ and $\R^n$, equipped with standard inner product $\iprods{\cdot,\cdot}$ and Euclidean norm $\norm{\cdot}$.
For any function $f$, $\dom{f} := \set{ x\in\R^p \mid f(x) < +\infty}$ denotes the effective domain of $f$. 
If $f$ is continuously differentiable, then $\nabla{f}$ denotes  its gradient.
If, in addition, $f$ is twice continuously differentiable, then $\nabla^2{f}$ denotes its Hessian.

For a stochastic function $f_{\xi}$ defined on a probability space $(\Omega, \mathbb{P})$, we use $\Exps{\xi}{f_{\xi}} := \Exps{\xi\sim\Probn}{f_{\xi}}$ to denote the expected value or vector of $f_{\xi}$ w.r.t. $\xi$ on $\Omega$.
We also overload the notation $\Exps{\xi_t}{\cdot}$ to express the expected value w.r.t. a realization $\xi_t$ in both single-sample and mini-batch cases.
Given a finite set $\Sc_m := \set{s_1, \cdots, s_m}$, we denote $s \sim \Unip{\pb}{\Sc_m}$ if $\Prob{s = s_i} = \pb_i$ for $\pb_i > 0$ and $\sum_{i=1}^m\pb_i = 1$.
If $\pb_i = \frac{1}{m}$ for $i=1,\cdots, m$, then we write $s \sim \Uni{\Sc_m}$ by dropping the probability distribution $\pb$.

Given a stochastic mapping $G : \R^p\times \Omega \to\R^q$ depending on a random vector $\xi\in\Omega$, we say that $G$ is $L$-average Lipschitz continuous if $\Exps{\xi}{\norms{G(x) - G(y)}^2} \leq L^2\norms{x - y}^2$ for all $x, y$, where $L \in (0, +\infty)$ is called the Lipschitz constant of $G$.
If $G$ is a deterministic function, then this condition becomes $\norms{G(x) - G(y)} \leq L\norms{x-y}$ stated that $G$ is $L$-Lipschitz continuous.
In particular, if this condition holds for $G = \nabla{f}$, then we say that $f$ is $L$-smooth.

For a proper, closed, and convex function $\psi : \R^p\to\Rext$,  $\partial{\psi}(x) := \{w\in\R^p \mid \psi(y) \geq \psi(x) + \iprods{w, y-x},~\forall y\in\dom{f} \}$ denotes its subdifferential at $x$, and $\prox_{\psi}(x) := \argmin_u\set{ \psi(x) + \tfrac{1}{2}\norms{u - x}^2}$ denotes its proximal operator. 
Note that $\prox_{\eta\psi}$ is nonexpansive, i.e., $\norms{\prox_{\eta\psi}(x) - \prox_{\eta\psi}(y)} \leq \norms{x - y}$ for all $x,y\in\dom{\psi}$.
If $\psi$ is the indicator $\delta_{\Xc}$ of a nonempty, closed, and convex set $\Xc$, then $\prox_{\delta_{\Xc}}$ reduces to the Euclidean projection $\proj_{\Xc}$ onto $\Xc$.

If $x$ is a matrix, then $\norm{x}$ is the spectral norm of $x$ and the inner product of two matrices $x$ and $y$ is defined as $\iprods{x,y} := \trace{x^{\top}y}$.
Also, $\mathbb{N}_{+}$ stands for the set of positive integer numbers, and $[n] := \set{1, 2, \cdots, n}$.
Given $a\in\R$, $\lfloor a\rfloor$ denotes the maximum integer number that is less than or equal to $a$.
We also use $\BigO{\cdot}$ to express complexity bounds of algorithms.

\beforesubsec
\subsection{\bf  Fundamental assumptions}
\aftersubsec
Our algorithms developed in the sequel rely on the following fundamental assumptions:

\begin{assumption}\label{as:A0}
Both problems \eqref{eq:ncvx_prob} and \eqref{eq:finite_sum} satisfy the following conditions:
\begin{compactitem}
\item[$\mathrm{(a)}$] $($\textbf{Convexity of the regularizer}$)$ $\psi : \R^p\to\Rext$ is a proper, closed, and convex function.
The domain $\dom{F} := \dom{f}\cap\dom{g}$ is nonempty.

\item[$\mathrm{(b)}$] $($\textbf{Boundedness from below}$)$ There exists a finite lower bound 
\myeq{eq:lower_bound}{
F^{\star} := \inf_{x\in\R^p}\Big\{ F(x) := f(x) + \psi(x) \Big\} > -\infty.
}
\end{compactitem}
\end{assumption}
Assumption~\ref{as:A0}(b) is fundamental and required for any algorithm.
Here, since $\psi$ is proper, closed, and convex, its proximal operator $\prox_{\eta\psi}(\cdot)$ is well-defined, single-valued, and non-expansive.
We assume that this proximal operator can be computed exactly.

\begin{assumption}[\textbf{$L$-average smoothness}]\label{as:A1} 
The expected value function $f(\cdot)$ in \eqref{eq:ncvx_prob}  is $L$-smooth on $\dom{F}$, i.e., there exists $L\in (0, +\infty)$ such that
\myeq{eq:L_smooth}{
\Exps{\xi}{\norm{\nabla{f}_{\xi}(x) - \nabla{f}_{\xi}(y)}^2} \leq L^2\norms{x - y}^2,~~\forall x, y\in\dom{F}.
}
In the finite sum setting \eqref{eq:finite_sum}, the $L$-smoothness condition \eqref{eq:L_smooth} can be expressed as the $L$-average smoothness of  all $f_i$ with the moduli $L$ as:
\myeq{eq:L_smooth2}{
\frac{1}{n}\sum_{i=1}^n\norm{\nabla{f_i}(x) - \nabla{f_i}(y)}^2 \leq L^2\norms{x - y}^2,~~\forall x, y\in\dom{F}.
}
\end{assumption}

\begin{assumption}[\textbf{Bounded variance}]\label{as:A1b}
There exists $\sigma \in [0, \infty)$ such that 
\myeq{eq:bounded_variance2}{
\Exps{\xi}{\norms{\nabla{f}_{\xi}(x) - \nabla{f}(x)}^2} \leq \sigma^2, ~~~\forall x\in\dom{F}.
}
The bounded variance condition for \eqref{eq:finite_sum} becomes
\myeq{eq:bounded_variance2}{
\Exps{i}{\norms{\nabla{f}_i(x) - \nabla{f}(x)}^2} \equiv \frac{1}{n}\sum_{i=1}^n\norms{\nabla{f_i}(x) - \nabla{f}(x)}^2 \leq \sigma^2, ~~~\forall x\in\dom{F}.
}
\end{assumption}
\revise{Assumptions \ref{as:A1} and \ref{as:A1b} are widely used in stochastic optimization methods. 
They or their stronger versions are required for all existing variance reduced-based stochastic gradient-based methods for solving \eqref{eq:ncvx_prob}.
The $L$-average smoothness in \eqref{eq:L_smooth2} is also called \mytxtit{mean-squared smoothness}, see, e.g., \cite{arjevani2019lower}.
In the finite-sum setting \eqref{eq:finite_sum}, if $f_i$ is $L_i$-smooth, then $f$ is also $L$-average smooth with $L = \frac{1}{\sqrt{n}}(\sum_{i=1}^nL_i^2)^{1/2}$.
Conversely, if $f$ is $L$-average smooth, then $f_i$ is also $L$-smooth with $L_i = \sqrt{n}L$.
Therefore, the $L$-average smoothness constant is generally smaller than the one derived from the individual smoothness constant of each $f_i$ \cite{Pham2019}.

The $L$-average smoothness is stronger than the $L$-smoothness of the expected value function $f$ used in standard SGD schemes \cite{ghadimi2013stochastic}.
Indeed, the $L$-average smoothness of $f$ implies the $L$-smoothness of the expected value function $f$ due to Jensen's inequality $\Vert \Exps{\xi}{\nabla{f}_{\xi}(x) - \nabla{f}_{\xi}(y)}\Vert^2 \leq \Exps{\xi}{\norm{\nabla{f}_{\xi}(x) - \nabla{f}_{\xi}(y)}^2}$, but not vice versa.
Fortunately, Assumption~\ref{as:A1} holds for many optimization problems in machine learning and statistics such as binary classification, linear regression, and neural networks.
If $\sigma = 0$, then \eqref{eq:ncvx_prob} reduces to a deterministic setting, while \eqref{eq:bounded_variance2} forces $f_i = f$ for all $i=1,\cdots, n$ in \eqref{eq:finite_sum}.
}

\beforesubsec
\subsection{\bf  First-order optimality condition}
\aftersubsec
The optimality condition of \eqref{eq:ncvx_prob} (or \eqref{eq:finite_sum}) can be written as
\myeq{eq:opt_cond}{
0 \in \nabla{f}(x^{\star}) + \partial{\psi}(x^{\star}).  
}
Any point $x^{\star}$ satisfying \eqref{eq:opt_cond} is called a stationary point of \eqref{eq:ncvx_prob} (or \eqref{eq:finite_sum}).
Note that \eqref{eq:opt_cond} can be written equivalently to 
\myeq{eq:grad_map}{
\Grad_{\eta}(x^{\star}) := \frac{1}{\eta}\left(x^{\star} - \prox_{\eta\psi}(x^{\star} - \eta\nabla{f}(x^{\star}))\right) = 0.
}
Here, $\Grad_{\eta}$ is called the gradient mapping of $F$ in \eqref{eq:ncvx_prob} for any $\eta > 0$.
It is obvious that if $\psi = 0$, then $\Grad_{\eta}(x) = \nabla{f}(x)$, the gradient of $f$ at $x$.
Our goal is to seek an $\varepsilon$-stationary point $\overline{x}_T$ of \eqref{eq:ncvx_prob} or \eqref{eq:finite_sum} defined as follows:

\begin{definition}\label{de:stationary_point}
\textit{
Given a desired acuracy $\varepsilon > 0$, a point $\overline{x}_T\in\dom{F}$ is said to be an $\varepsilon$-stationary point of \eqref{eq:ncvx_prob} or \eqref{eq:finite_sum} if
\myeq{eq:approx_opt_cond}{
\Exp{\norms{\Grad_{\eta}(\overline{x}_T)}^2} \leq \varepsilon^2.
}
Here, the expectation is taken over all the randomness rendered from both $\xi$ in \eqref{eq:ncvx_prob} and the algorithm that is solving the problem. 
}
\end{definition}

Let us clarify why $\overline{x}_T$ is an approximate stationary point of \eqref{eq:ncvx_prob} (or \eqref{eq:finite_sum}).
Indeed, if $\overline{x}_T^{+} := \prox_{\eta\psi}(\overline{x}_T - \eta\nabla{f}(\overline{x}_T))$, then $\Exp{\norms{\Grad_{\eta}(\overline{x}_T)}^2} \leq \varepsilon^2$ leads to $\Exp{\norms{\overline{x}_T^{+} - \overline{x}_T}^2} \leq \eta^2\varepsilon^2$.
On the other hand, $\overline{x}_T^{+} = \prox_{\eta\psi}(\overline{x}_T - \eta\nabla{f}(\overline{x}_T))$ is equivalent to $\frac{1}{\eta}(\overline{x}_T - \overline{x}_T^{+}) \in \nabla{f}(\overline{x}_T) + \partial{\psi}(\overline{x}_T^{+})$.
Therefore, $\norms{\nabla{f}(\overline{x}_T^{+}) + \nabla{\psi}(\overline{x}_T^{+})} \leq \norms{\nabla{f}(\overline{x}_T^{+}) - \nabla{f}(\overline{x}_T)} + \frac{1}{\eta}\norms{\overline{x}_T^{+} - \overline{x}_T}$ for some $\nabla{\psi}(\overline{x}_T^+) \in \partial{\psi}(\overline{x}_T^+)$.
Using the $L$-average smoothness of $f$, we have 
\myeqn{
\Exp{\norms{\nabla{f}(\overline{x}_T^{+}) + \nabla{\psi}(\overline{x}_T^{+})}^2} \leq 2\left(L^2 + \tfrac{1}{\eta^2}\right)\Exp{\norms{\overline{x}_T^{+} - \overline{x}_T}^2} \leq 2(1+L^2\eta^2)\varepsilon^2.
}
This shows that $\overline{x}_T^+$ is an approximate stationary point of \eqref{eq:ncvx_prob}  (or \eqref{eq:finite_sum}).

In practice, we often replace the condition \eqref{eq:approx_opt_cond} by $\min_{0\leq t\leq T} \norms{\Grad_{\eta}(x_t)} \leq \varepsilon$, which leads to the ``best'' iterate convergence in expectation.

\beforesec
\section{Hybrid Stochastic Estimators: Definition and Fundamental Properties}\label{sec:stochastic_estimators}
\aftersec
In this section, we propose new stochastic estimators for a generic function $G$ that can cover  function value, gradient, and Hessian of any expected value function $f$ in \eqref{eq:ncvx_prob}.

\beforesubsec
\subsection{\bf  The construction of hybrid stochastic estimators}\label{subsec:biased_estimator} 
\aftersubsec
Given a function $G(x) := \Exps{\xi}{G_{\xi}(x)}$, where $G_{\xi}$ is a (vector) stochastic function from $\R^p\times\Omega\to\R^q$.
We define the following stochastic estimator of $G$.
As concrete examples, $G$ can be the gradient mapping $\nabla{f}$ of $f$ or the Hessian mapping $\nabla^2{f}$ of $f$ in problem \eqref{eq:ncvx_prob} or \eqref{eq:finite_sum}.

\begin{definition}\label{de:hybrid_estimator}
\textit{
Let $u_t$ be an unbiased stochastic estimator of $G(x_t)$ formed by a realization $\zeta_t$ of $\xi$, i.e., $\Exps{\zeta_t}{u_t} = G(x_t)$ at a given $x_t$.
The following quantity:
\myeq{eq:v_estimator}{
v_t :=  \beta_{t-1}v_{t-1} + \beta_{t-1}\left[G_{\xi_t}(x_t) - G_{\xi_t}(x_{t-1})\right] + (1-\beta_{t-1})u_t,
}
is called a \textit{hybrid stochastic estimator} of $G$ at $x_t$, where $\xi_t$ and $\zeta_t$ are two independent realizations of $\xi$ on $\Omega$  and $\beta_{t-1} \in [0, 1]$ is a given weight.
}
\end{definition}
We observe from \eqref{eq:v_estimator} two extreme cases as follows:
\begin{compactitem}
\item[$\bullet$] If $\beta_t = 0$, then we obtain a simple unbiased stochastic estimator, i.e., $v_t = u_t$. 
\item[$\bullet$] If $\beta_t = 1$, then we obtain the SARAH-type estimator as studied in \cite{nguyen2017sarah} but for general function $G$ instead of just gradient mappings, i.e., $v_t = v_{t-1} + G_{\xi_t}(x_t) - G_{\xi_t}(x_{t-1})$.
\end{compactitem}
In this paper, we are interested in the case $\beta_t \in (0, 1)$, which can be referred to as a hybrid recursive stochastic estimator.

Note that we can rewrite $v_t$ as
\myeqn{
v_t :=   \beta_{t-1}G_{\xi_t}(x_t) + (1-\beta_{t-1})u_t + \beta_{t-1}\left[ v_{t-1} - G_{\xi_t}(x_{t-1})\right].
}
The first two terms are two stochastic estimators evaluated at $x_t$, while the third term is the difference $\delta_{t-1} := v_{t-1} - G_{\xi_t}(x_{t-1})$ of the previous estimator and a stochastic estimator at the previous iterate.
Here, since $\beta_{t-1} \in (0, 1)$, the estimator $v_t$ can be viewed as the way of emphasizing on the new information at $x_t$ than the old one evaluated at $x_{t-1}$.

In fact, if $G = \nabla{f}$, then the hybrid estimator $v_t$ covers many other estimators, including SGD, SVRG, and SARAH as follows:
\begin{compactitem}
\item[$\bullet$] \textbf{The classical stochastic estimator:} $u_t := G_{\zeta_t}(x_t)$. 
\item[$\bullet$] \textbf{The SVRG estimator:} $u_t := u_t^{\mathrm{svrg}} = \overline{G}(\widetilde{x}) + G_{\zeta_t}(x_t) - G_{\zeta_t}(\widetilde{x})$, where $\overline{G}(\widetilde{x})$ is a given unbiased snapshot evaluated at a given point $\widetilde{x}$.
\item[$\bullet$] \textbf{The SAGA estimator:} $u_t := u_t^{\textrm{saga}} = G_{j_t}(y_{t+1}^{j_t}) - G_{j_t}(y_t^{j_t}) + \frac{1}{n}\sum_{i=1}^nG_i(y_t^i)$, where $y_{t+1}^{j_t} = x_t$ if $i = j_t$ and $y_{t+1}^i = y_t^i$ if $i\neq j_t$.
\end{compactitem}
While the classical stochastic and SVRG estimators can be used in both expectation and finite-sum settings, this SAGA estimator is \nhan{only} applicable to the finite-sum setting \eqref{eq:finite_sum}.

\beforesubsec
\subsection{\bf Properties of hybrid stochastic estimators}
\aftersubsec
Let us first define 
\myeq{eq:sigma_field}{
\Fc_t := \sigma\left(x_0, \xi_0,\zeta_0, \cdots, \xi_{t-1}, \zeta_{t-1}\right)
}
the $\sigma$-field generated by the history of realizations $\set{x_0, \xi_0,\zeta_0, \cdots, \xi_{t-1}, \zeta_{t-1}}$ of $\xi$ up to the iteration $t$.
We first prove  the following property of  the hybrid stochastic estimator $v_t$.

\begin{lemma}\label{le:key_estimate10}
Let $v_t$ be defined by \eqref{eq:v_estimator}.
Then
\myeq{eq:biased_estimator}{
\Exps{(\xi_t,\zeta_t)}{v_t} = G(x_t) + \beta_{t-1}\left[ v_{t-1} - G(x_{t-1})\right].
}
If $\beta_{t-1} \neq 0$, then $v_t$ is a biased estimator of $G(x_t)$.
Moreover, we have
\myeq{eq:key_estimate10}{
\begin{array}{lcl}
\Exps{(\xi_t,\zeta_t)}{\norms{v_t - G(x_t)}^2} &=&  \beta_{t-1}^2\norms{v_{t-1}  - G(x_{t-1})}^2 - \beta_{t-1}^2\norms{G(x_t) - G(x_{t-1})}^2 \vspace{1ex}\\
&& + {~} \beta_{t-1}^2\Exps{\xi_t}{\norms{G_{\xi_t}(x_t) - G_{\xi_t}(x_{t-1})}^2} \vspace{1ex}\\
&& + {~} (1-\beta_{t-1})^2\Exps{\zeta_t}{\norms{u_t - G(x_t)}^2}.
\end{array}
}
\end{lemma}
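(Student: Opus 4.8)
The plan is to handle the two claims in turn, throughout conditioning on the $\sigma$-field $\Fc_t$ from \eqref{eq:sigma_field}, so that $x_{t-1}$, $x_t$, $v_{t-1}$, and the weight $\beta_{t-1}$ are all treated as deterministic under $\Exps{(\xi_t,\zeta_t)}{\cdot}$. For the bias identity \eqref{eq:biased_estimator}, I would apply expectation termwise to \eqref{eq:v_estimator}. Because $\xi_t$ is a fresh realization independent of $\Fc_t$, we have $\Exps{\xi_t}{G_{\xi_t}(x_t)} = G(x_t)$ and $\Exps{\xi_t}{G_{\xi_t}(x_{t-1})} = G(x_{t-1})$, while the unbiasedness hypothesis of Definition~\ref{de:hybrid_estimator} gives $\Exps{\zeta_t}{u_t} = G(x_t)$. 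Collecting terms and using that the coefficient of $G(x_t)$ is $\beta_{t-1} + (1-\beta_{t-1}) = 1$ yields $\Exps{(\xi_t,\zeta_t)}{v_t} = G(x_t) + \beta_{t-1}(v_{t-1} - G(x_{t-1}))$, which is nonzero in general whenever $\beta_{t-1}\neq 0$, establishing the bias.

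For the variance identity \eqref{eq:key_estimate10}, the key idea is to write the error $v_t - G(x_t)$ as its conditional mean plus two independent, conditionally mean-zero fluctuations. Subtracting \eqref{eq:biased_estimator} from \eqref{eq:v_estimator} gives
\[
v_t - G(x_t) = \underbrace{\beta_{t-1}\left(v_{t-1}-G(x_{t-1})\right)}_{=:~m_t} + \Delta_t + \Theta_t,
\]
where $\Delta_t := \beta_{t-1}\left[(G_{\xi_t}(x_t)-G_{\xi_t}(x_{t-1})) - (G(x_t)-G(x_{t-1}))\right]$ collects the $\xi_t$-randomness and $\Theta_t := (1-\beta_{t-1})(u_t - G(x_t))$ collects the $\zeta_t$-randomness. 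By construction $\Exps{\xi_t}{\Delta_t}=0$ and $\Exps{\zeta_t}{\Theta_t}=0$, while $m_t$ is $\Fc_t$-measurable, hence constant under the conditional expectation.

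I would then expand $\norms{m_t + \Delta_t + \Theta_t}^2$ and take $\Exps{(\xi_t,\zeta_t)}{\cdot}$. The main—and only delicate—point is that all three cross terms vanish: $\iprods{m_t,\Delta_t}$ and $\iprods{m_t,\Theta_t}$ because $m_t$ is deterministic and $\Delta_t,\Theta_t$ are mean-zero, and $\iprods{\Delta_t,\Theta_t}$ because $\xi_t$ and $\zeta_t$ are \emph{independent} (Definition~\ref{de:hybrid_estimator}), so the joint expectation factorizes as $\iprods{\Exps{\xi_t}{\Delta_t},\,\Exps{\zeta_t}{\Theta_t}} = 0$. What remains is $\norms{m_t}^2 + \Exps{\xi_t}{\norms{\Delta_t}^2} + \Exps{\zeta_t}{\norms{\Theta_t}^2}$. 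The first term equals $\beta_{t-1}^2\norms{v_{t-1}-G(x_{t-1})}^2$ and the third equals $(1-\beta_{t-1})^2\Exps{\zeta_t}{\norms{u_t - G(x_t)}^2}$, matching the outer two terms of \eqref{eq:key_estimate10}.

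Finally, to recover the middle two terms I would apply the elementary variance decomposition $\Exp{\norms{X - \Exp{X}}^2} = \Exp{\norms{X}^2} - \norms{\Exp{X}}^2$ to the centered quantity $\Delta_t$, obtaining $\Exps{\xi_t}{\norms{\Delta_t}^2} = \beta_{t-1}^2\Exps{\xi_t}{\norms{G_{\xi_t}(x_t)-G_{\xi_t}(x_{t-1})}^2} - \beta_{t-1}^2\norms{G(x_t)-G(x_{t-1})}^2$. Summing the four pieces gives \eqref{eq:key_estimate10}. The only step requiring genuine care is the vanishing of the cross terms, so I would make the independence of $\xi_t$ and $\zeta_t$ explicit precisely where $\iprods{\Delta_t,\Theta_t}$ is treated; the rest is linearity of expectation and the second-moment-minus-squared-mean identity.
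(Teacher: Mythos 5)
Your proposal is correct and follows essentially the same route as the paper: the paper likewise writes $v_t - G(x_t) = \beta_{t-1}(v_{t-1}-G(x_{t-1})) + \beta_{t-1}(\Delta_{\xi_t}-\Delta_t) + (1-\beta_{t-1})\hat{\delta}_t$ (your $m_t + \Delta_t + \Theta_t$), expands the square, kills the three cross terms via the mean-zero property and the independence of $\xi_t$ and $\zeta_t$, and applies the identity $\Exps{\xi_t}{\norms{\Delta_{\xi_t}-\Delta_t}^2} = \Exps{\xi_t}{\norms{\Delta_{\xi_t}}^2} - \norms{\Delta_t}^2$. The only cosmetic difference is that the paper organizes the computation as an iterated conditional expectation (first over $\xi_t$ given $\zeta_t$, then over $\zeta_t$) rather than factorizing the joint expectation of the $\iprods{\Delta_t,\Theta_t}$ cross term directly.
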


\begin{proof}
By taking the expectation of both sides in \eqref{eq:v_estimator} w.r.t. $(\xi_t,\zeta_t)$ and using the fact that $\xi_t$ and $\zeta_t$ are independent, we can easily obtain \eqref{eq:biased_estimator}.

Let us first denote $\delta_t := v_t - G(x_t)$, $\hat{\delta}_t := u_t - G(x_t)$, $\Delta_{\xi_t} := G_{\xi_t}(x_t) - G_{\xi_t}(x_{t-1})$, and $\Delta_t := G(x_t) - G(x_{t-1})$.
Clearly, $\Exps{\xi_t}{\Delta_{\xi_t}} = \Delta_t$ and $\Exps{\zeta_t}{\hat{\delta}_t} = 0$.
Next, we write
\myeqn{ 
\begin{array}{lcl}
\delta_t &:=& v_t - G(x_t) =  \beta_{t-1}(v_{t-1}  - G(x_{t-1})) + \beta_{t-1}(\Delta_{\xi_t} - \Delta_t)  + (1-\beta_{t-1})\big[u_t - G(x_t)\big] \vspace{1ex}\\
&= & \beta_{t-1}\delta_{t-1} + \beta_{t-1}(\Delta_{\xi_t} - \Delta_t) + (1-\beta_{t-1})\hat{\delta}_t.
\end{array}
}
In this case, we have
\myeqn{
\begin{array}{lcl}
\norms{\delta_t}^2 &=& \beta_{t-1}^2\norms{\delta_{t-1}}^2 + \beta_{t-1}^2\norms{\Delta_{\xi_t} - \Delta_t}^2 + (1-\beta_{t-1})^2\norms{\hat{\delta}_t}^2 \vspace{1ex}\\
&&+ {~} 2\beta_{t-1}^2\iprods{\delta_{t-1}, \Delta_{\xi_t} - \Delta_t} + 2\beta_{t-1}(1-\beta_{t-1})\iprods{\delta_{t-1}, \hat{\delta}_t} + 2\beta_{t-1}(1-\beta_{t-1})\iprods{\Delta_{\xi_t} - \Delta_t,\hat{\delta}_t}.
\end{array}
}
Taking expectation w.r.t. $\xi_t$ conditioned on $\zeta_t$, and noting that $\Exps{\xi_t}{\Delta_{\xi_t}} = \Delta_t$, we obtain
\myeqn{
\begin{array}{lcl}
\Exps{\xi_t}{\norms{\delta_t}^2} &=&  \beta_{t-1}^2\norms{\delta_{t-1}}^2 + \beta_{t-1}^2\Exps{\xi_t}{\norms{\Delta_{\xi_t} - \Delta_t}^2} + (1-\beta_{t-1})^2\norms{\hat{\delta}_t}^2 \vspace{1ex}\\
&&+ {~} 2\beta_{t-1}(1-\beta_{t-1})\iprods{\delta_{t-1}, \hat{\delta}_t}.
\end{array}
}
Taking the expectation w.r.t. $\zeta_t$, and noting that $\Exps{(\xi_t,\zeta_t)}{\cdot} = \Exps{\zeta_t}{\Exps{\xi_t}{\cdot\mid \zeta_t}}$, $\Exps{\zeta_t}{\hat{\delta}_t} = 0$, and $\Exps{\xi_t}{\norms{\Delta_{\xi_t} - \Delta_t}^2} = \Exps{\xi_t}{\norms{\Delta_{\xi_t}}^2} - \norms{\Delta_t}^2$, we obtain
\myeqn{
\begin{array}{ll}
\Exps{(\xi_t,\zeta_t)}{\norms{\delta_t}^2} &=  \beta_{t-1}^2\norms{\delta_{t-1}}^2 + \beta_{t-1}^2\Exps{\xi_t}{\norms{\Delta_{\xi_t}}^2} - \norms{\Delta_t}^2 +  (1-\beta_{t-1})^2\Exps{\zeta_t}{\norms{\hat{\delta}_t}^2},
\end{array}
}
which is exactly \eqref{eq:key_estimate10} by substituting back the definitions of $\delta_t$, $\Delta_t$, $\Delta_{\xi_t}$, and $\hat{\delta}_t$ into it.  
\Eproof
\end{proof}

\begin{remark}\label{re:biased_variance_trade_off}
From \eqref{eq:v_estimator}, we can see that $v_t$ remains a biased estimator as long as $\beta_{t-1} \in (0, 1]$. 
Its biased term is 
\myeqn{
\mathrm{Bias}[v_t\mid \Fc_t] = \norms{\Exps{(\xi_t,\zeta_t)}{v_t - G(x_t) \mid \Fc_t}} = \beta_{t-1}\norms{v_{t-1} - G(x_{t-1})} \leq \norms{v_{t-1} - G(x_{t-1})}.
}
Clearly, the biased term of the estimator $v_t$ is  smaller than the one in the SARAH estimator $v_t^{\textrm{sarah}} := v_{t-1}^{\textrm{sarah}} + G_{\xi_t}(x_t) - G_{\xi_t}(x_{t-1})$ in \cite{nguyen2017sarah}, which is $\mathrm{Bias}[v^{\mathrm{sarah}}_t \mid \Fc_t] = \norms{v_{t-1}^{\mathrm{sarah}} - G(x_{t-1})}$.
\end{remark}

The following lemma bounds the variance $\Delta_t := v_t - \nabla{f}(x_t)$ of $v_t$ defined in \eqref{eq:v_estimator}. 

\begin{lemma}\label{le:upper_bound_new}
Assume that $G_{\xi}$ is $L$-average Lipschitz continuous  and $u_t := G_{\zeta_t}(x_t)$ is an unbiased stochastic estimator of $G$.
Then, we have the following upper bound:
\myeq{eq:vt_variance_bound_new}{
\Exp{\norms{v_t - G(x_t)}^2} \leq \omega_t\Exp{\norms{v_0 - G(x_0)}^2} + L^2\sum_{i=0}^{t-1}\omega_{i,t}\Exp{\norms{x_{i+1} - x_{i}}^2} + S_t,
}
where the expectation is taking over all the randomness $\Fc_{t+1} := \sigma(x_0,\xi_0, \zeta_0, \cdots, \xi_t, \zeta_t)$, and
\myeq{eq:other_quantities}{
\left\{\begin{array}{lcl}
\omega_{t} &:=& \prod_{i=1}^{t}\beta_{i-1}^2, \vspace{1ex}\\
\omega_{i, t} &:=& \prod_{j=i+1}^{t}\beta_{j-1}^2,~~~i=0,\cdots, t, \vspace{1ex}\\
S_{t} &:=& \sum_{i=0}^{t-1}\big(\prod_{j=i+2}^{t}\beta_{j-1}^2\big)(1-\beta_i)^2\Exp{\norms{u_{i+1} - G(x_{i+1})}^2}.
\end{array}\right.
}
\end{lemma}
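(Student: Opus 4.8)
The plan is to reduce the claim to a one-step recursion supplied directly by Lemma~\ref{le:key_estimate10} and then unroll that recursion by induction on $t$. First I would invoke \eqref{eq:key_estimate10}, which writes the conditional second moment $\Exps{(\xi_t,\zeta_t)}{\norms{v_t - G(x_t)}^2}$ as four terms. Two of them are handled immediately: the term $-\beta_{t-1}^2\norms{G(x_t) - G(x_{t-1})}^2$ is nonpositive and can simply be discarded, while $\beta_{t-1}^2\Exps{\xi_t}{\norms{G_{\xi_t}(x_t) - G_{\xi_t}(x_{t-1})}^2}$ is bounded above by $\beta_{t-1}^2 L^2\norms{x_t - x_{t-1}}^2$ using the $L$-average Lipschitz continuity of $G_\xi$. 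Taking the total expectation over $\Fc_{t+1}$ via the tower property $\Exp{\cdot} = \Exp{\Exps{(\xi_t,\zeta_t)}{\cdot\mid\Fc_t}}$ then produces the clean one-step estimate
\myeqn{
\Exp{\norms{v_t - G(x_t)}^2} \leq \beta_{t-1}^2\Exp{\norms{v_{t-1} - G(x_{t-1})}^2} + \beta_{t-1}^2 L^2\Exp{\norms{x_t - x_{t-1}}^2} + (1-\beta_{t-1})^2\Exp{\norms{u_t - G(x_t)}^2}.
}

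Next I would unroll this recursion. Writing $D_t := \Exp{\norms{v_t - G(x_t)}^2}$, the estimate has the form $D_t \leq \beta_{t-1}^2 D_{t-1} + b_t + c_t$, where $b_t := \beta_{t-1}^2 L^2\Exp{\norms{x_t - x_{t-1}}^2}$ collects the Lipschitz contribution and $c_t := (1-\beta_{t-1})^2\Exp{\norms{u_t - G(x_t)}^2}$ the unbiased-estimator contribution at step $t$. Iterating from $t$ down to $0$ yields the factor $\prod_{k=1}^t\beta_{k-1}^2 = \omega_t$ multiplying $D_0$, together with $\sum_{k=1}^t\big(\prod_{j=k+1}^t\beta_{j-1}^2\big)(b_k + c_k)$. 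The only real work is then bookkeeping the products: for the Lipschitz part, the weight $\beta_{k-1}^2\prod_{j=k+1}^t\beta_{j-1}^2$ collapses to $\prod_{j=k}^t\beta_{j-1}^2$, which after the reindexing $i=k-1$ is exactly $\omega_{i,t} = \prod_{j=i+1}^t\beta_{j-1}^2$ attached to $L^2\Exp{\norms{x_{i+1}-x_i}^2}$; for the estimator part, the weight $\prod_{j=k+1}^t\beta_{j-1}^2$ together with the factor $(1-\beta_{k-1})^2$ reindexes precisely to the summand of $S_t$.

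The main, though routine, obstacle is this index alignment, and in particular the asymmetry between the weights $\omega_{i,t}$ and those appearing in $S_t$. The point to get right is that the coefficient $\beta_{k-1}^2$ of $D_{t-1}$ in the recursion is \emph{absorbed} into the Lipschitz weight $b_k$ (which is why $\omega_{i,t}$ runs from $j=i+1$) but is \emph{not} present in the estimator weight $c_k$ (which is why $S_t$ carries the shorter product $\prod_{j=i+2}^t\beta_{j-1}^2$ alongside the extra factor $(1-\beta_i)^2$). The cleanest way to make this rigorous is a formal induction on $t$: the base case $t=0$ is the identity $D_0 = \Exp{\norms{v_0 - G(x_0)}^2}$, and the inductive step substitutes the induction hypothesis for $D_{t-1}$ into the one-step recursion and recognizes the resulting truncated products as the claimed weights $\omega_t$, $\omega_{i,t}$, and those in $S_t$, giving \eqref{eq:vt_variance_bound_new}.
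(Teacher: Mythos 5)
Your proposal is correct and follows essentially the same route as the paper's proof: both start from the one-step identity in Lemma~\ref{le:key_estimate10}, drop the nonpositive term, bound the stochastic difference via the $L$-average Lipschitz condition, take the full expectation, and unroll the resulting recursion, with the index bookkeeping (the $\beta_{k-1}^2$ factor absorbed into $\omega_{i,t}$ but absent from the weights in $S_t$) matching the paper exactly.
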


\begin{proof}
We first upper bound \eqref{eq:key_estimate10} by using $\sigma_t^2 := \Exps{\zeta_t}{\norms{u_t - G(x_t)}^2}$  and then taking the full expectation over $\Fc_{t+1} := \sigma(x_0, \xi_0, \zeta_0, \cdots, \xi_t, \zeta_t)$ as
\myeqn{
\begin{array}{lcl}
\Exp{\norms{v_t - G(x_t)}^2} &\leq&  \beta_{t-1}^2\Exp{\norms{v_{t-1}  - G(x_{t-1})}^2} + \beta_{t-1}^2\Exp{\norms{G_{\xi_t}(x_t) - G_{\xi_t}(x_{t-1})}^2} \vspace{1ex}\\
&& +{~} (1-\beta_{t-1})^2\sigma_t^2 \vspace{1ex}\\
&\overset{\tiny\eqref{eq:L_smooth}}{\leq}&  \beta_{t-1}^2\Exp{\norms{v_{t-1}  - G(x_{t-1})}^2} + \beta_{t-1}^2L^2\Exp{\norms{x_t - x_{t-1}}^2} + (1-\beta_{t-1})^2\sigma_t^2.
\end{array}
}
If we define $A_t^2 := \Exp{\norms{ v_t - G(x_t)}^2}$ and $B_{t-1}^2 := \Exp{\norms{x_t-x_{t-1}}^2}$, then the above inequality can be rewritten as
\myeqn{
A_t^2 \leq \beta_{t-1}^2A_{t-1}^2 +  L^2\beta_{t-1}^2B_{t-1}^2 + (1-\beta_{t-1})^2\sigma^2_t.
}
By induction, the last inequality implies 
\myeqn{
\begin{array}{lcl}
A_t^2 &\leq& \beta_{t-1}^2A_{t-1}^2 +  L^2\beta_{t-1}^2B_{t-1}^2 + (1-\beta_{t-1})^2\sigma^2_t \vspace{1ex}\\
&\leq & \beta_{t-1}^2\beta_{t-2}^2\big[\beta_{t-3}^2A_{t-3}^2 +  L^2\beta_{t-3}^2B_{t-3}^2 + (1-\beta_{t-3})^2\sigma^2_{t-2}\big] \vspace{1ex}\\
&& + {~} L^2\beta_{t-1}^2\beta_{t-2}^2B_{t-2}^2 + L^2\beta_{t-1}^2B_{t-1}^2 + \big[(1-\beta_{t-1})^2\sigma^2_t + \beta_{t-1}^2(1-\beta_{t-2})^2\sigma^2_{t-1}\big] \vspace{1ex}\\
&= & \beta_{t-1}^2\beta_{t-2}^2\beta_{t-3}^2A_{t-3}^2 +  L^2\beta_{t-1}^2\beta_{t-2}^2\beta_{t-3}^2B_{t-3}^2 +   L^2\beta_{t-1}^2\beta_{t-2}^2B_{t-2}^2 \vspace{1ex}\\
&& + {~} L^2\beta_{t-1}^2B_{t-1}^2 + \big[(1-\beta_{t-1})^2\sigma^2_t + \beta_{t-1}^2(1-\beta_{t-2})^2\sigma^2_{t-1} + \beta_{t-1}^2\beta_{t-2}^2(1-\beta_{t-3})^2\sigma^2_{t-2}\big] \vspace{1ex}\\
& \cdots & \cdots\cdots \vspace{1ex}\\
&\leq & (\beta_{t-1}^2\cdots\beta_0^2)A_0^2 + L^2(\beta_{t-1}^2\cdots\beta_0^2)B_0^2 + L^2(\beta_{t-1}^2\cdots\beta_1^2)B_1^2 + \cdots + L^2\beta_{t-1}^2B_{t-1}^2 \vspace{1ex}\\
&& + {~} \big[(1-\beta_{t-1})^2\sigma^2_t + \beta_{t-1}^2(1-\beta_{t-2})^2\sigma^2_{t-1} + \beta_{t-1}^2\beta_{t-2}^2(1-\beta_{t-3})^2\sigma^2_{t-2} + \cdots \vspace{1ex}\\
&& + {~} \beta_{t-1}^2\beta_{t-2}^2\cdots\beta_1^2(1-\beta_0)^2\sigma^2_1\big].
\end{array}
}
Here, we use a convention that $\prod_{i=t+1}^t\beta_i^2 = 1$.
As a result, the last expression can be written in the following compact form:
\myeq{eq:key_estimate1d}{
A_t^2 \leq \Big(\prod_{i=1}^{t}\beta_{i-1}^2\Big)A_0^2 + L^2\sum_{i=0}^{t-1}\Big(\prod_{j=i+1}^{t}\beta_{j-1}^2\Big)B_i^2 + \sum_{i=0}^{t-1}\Big(\prod_{j=i+2}^{t}\beta_{j-1}^2\Big)(1-\beta_i)^2\sigma^2_{i+1}.
}
Define $\omega_{t} := \prod_{i=1}^{t}\beta_{i-1}^2$, $\omega_{i, t} := \prod_{j=i+1}^{t}\beta_{j-1}^2$, and $S_{t} := \sum_{i=0}^{t-1}s_i = \sum_{i=0}^{t-1}\big(\prod_{j=i+2}^{t}\beta_{j-1}^2\big)(1-\beta_i)^2\sigma^2_{i+1}$ with $s_i := (1-\beta_i)^2\sigma^2_{i+1}\big(\prod_{j=i+2}^{t}\beta_{j-1}^2\big)$.
Then, we can rewrite \eqref{eq:key_estimate1d} as
\myeqn{
A_t^2 \leq \omega_{t}A_0^2 + L^2\sum_{i=0}^{t-1}\omega_{i,t}B_i^2 + S_{t},
}
which is exactly \eqref{eq:vt_variance_bound_new} by using the definition of $A_t$ and $B_t$ above.
\Eproof
\end{proof}

\beforesubsec
\subsection{\bf Mini-batch hybrid stochastic estimators}\label{subsec:mini_batch_biased_estimator} 
\aftersubsec
We can also consider a mini-batch hybrid stochastic estimator $\hat{v}_t$ of $G(x_t)$ as:
\myeq{eq:vhat_t}{
\hat{v}_t := \beta_{t-1}\hat{v}_{t-1} + \frac{\beta_{t-1}}{b_t}\sum_{i\in\Bc_t}\left[ G_{\xi_i}(x_t) - G_{\xi_i}(x_{t-1})\right] +  (1-\beta_{t-1})u_t,
}
where $\beta_{t-1}\in [0, 1]$ and $\Bc_t$ is a proper mini-batch of size $b_t$ (i.e., for any $\xi_t\in\Bc_t$, $\Prob{\xi_t\in\Bc_t} > 0$) and independent of $u_t$.
Note that $u_t$ can also be a mini-batch unbiased estimator of $G(x_t)$, e.g., $u_t := \frac{1}{\hat{b}_t}\sum_{j\in\hat{\Bc}_t}G_{\zeta_j}(x_t)$, where $\hat{\Bc}_t$ is a mini-batch of size $\hat{b}_t$ and independent of $\Bc_t$.

For $\hat{v}_t$ defined by \eqref{eq:vhat_t}, we have the following property, whose proof is in Appendix~\ref{apdx:le:key_pro_of_vhat_t}.

\begin{lemma}\label{le:key_pro_of_vhat_t}
Let  $\hat{v}_t$ be the mini-batch stochastic estimator of $G(x_t)$ defined by \eqref{eq:vhat_t}, where $u_t$ is also a mini-batch unbiased stochastic estimator of $G(x_t)$ with $\Exps{\hat{\Bc_t}}{u_t} = G(x_t)$ such that $\Bc_t$ is independent of $\hat{\Bc}_t$.
Then, the following estimates hold:
\myeq{eq:key_pro_of_vhat_t}{
\begin{array}{lcl}
\Exps{(\Bc_t, \hat{\Bc}_t)}{\hat{v}_t} &=& G(x_t) + \beta_{t-1}(\hat{v}_{t-1} - G(x_{t-1})), \vspace{1.5ex}\\
\Exps{(\Bc_t, \hat{\Bc}_t)}{\norms{\hat{v}_t -  G(x_t)}^2} &=&  \beta_{t-1}^2\norms{\hat{v}_{t-1}  - G(x_{t-1})}^2 - \rho\beta_{t-1}^2\norms{G(x_t) - G(x_{t-1})}^2 \vspace{1ex}\\
&& + {~} \rho\beta_{t-1}^2\Exps{\xi}{\norms{G_{\xi}(x_t) - G_{\xi}(x_{t-1})}^2} \vspace{1ex}\\
&& + {~} (1-\beta_{t-1})^2\Exps{\hat{\Bc}_t}{\norms{u_t - G(x_t)}^2},
\end{array}
}
where $\rho := \frac{n - b_t}{(n-1)b_t}$ if $G(x) := \frac{1}{n}\sum_{i=1}^nG_i(x)$ is a finite-sum, and $\rho := \frac{1}{b_t}$, otherwise $($i.e., $G(x) := \Exps{\xi}{G_{\xi}(x)}$$)$.
\end{lemma}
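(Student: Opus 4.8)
The plan is to mirror the single-sample argument in Lemma~\ref{le:key_estimate10}, replacing the individual stochastic difference by its mini-batch average and tracking the resulting variance factor $\rho$. For the first identity in \eqref{eq:key_pro_of_vhat_t}, I would take $\Exps{(\Bc_t,\hat{\Bc}_t)}{\cdot}$ of \eqref{eq:vhat_t} conditioned on $\Fc_t$. Since each index in $\Bc_t$ is drawn from a proper distribution, the average is unbiased for the full difference, i.e. $\Exps{\Bc_t}{\frac{1}{b_t}\sum_{i\in\Bc_t}[G_{\xi_i}(x_t) - G_{\xi_i}(x_{t-1})]} = G(x_t) - G(x_{t-1})$, and $\Exps{\hat{\Bc}_t}{u_t} = G(x_t)$ by assumption. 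Hence the expectation of $\hat{v}_t$ collapses to $G(x_t) + \beta_{t-1}(\hat{v}_{t-1} - G(x_{t-1}))$, giving the biased-estimator identity directly.

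For the second-moment identity, I would introduce the shorthand $\hat{\delta}_t := \hat{v}_t - G(x_t)$, $\hat{e}_t := u_t - G(x_t)$, the mini-batch difference $\bar{\Delta}_{\Bc_t} := \frac{1}{b_t}\sum_{i\in\Bc_t}[G_{\xi_i}(x_t) - G_{\xi_i}(x_{t-1})]$, and the full difference $\Delta_t := G(x_t) - G(x_{t-1})$. A direct rearrangement of \eqref{eq:vhat_t} yields the decomposition
\myeqn{
\hat{\delta}_t = \beta_{t-1}\hat{\delta}_{t-1} + \beta_{t-1}(\bar{\Delta}_{\Bc_t} - \Delta_t) + (1-\beta_{t-1})\hat{e}_t.
}
I would then expand $\norms{\hat{\delta}_t}^2$ into three squared terms plus three cross terms and take the conditional expectation over $(\Bc_t,\hat{\Bc}_t)$. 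Because $\hat{\delta}_{t-1}$ is $\Fc_t$-measurable, $\Exps{\Bc_t}{\bar{\Delta}_{\Bc_t} - \Delta_t} = 0$, $\Exps{\hat{\Bc}_t}{\hat{e}_t} = 0$, and $\Bc_t$ is independent of $\hat{\Bc}_t$, all three cross terms vanish, leaving
\myeqn{
\Exps{(\Bc_t,\hat{\Bc}_t)}{\norms{\hat{\delta}_t}^2} = \beta_{t-1}^2\norms{\hat{\delta}_{t-1}}^2 + \beta_{t-1}^2\Exps{\Bc_t}{\norms{\bar{\Delta}_{\Bc_t} - \Delta_t}^2} + (1-\beta_{t-1})^2\Exps{\hat{\Bc}_t}{\norms{\hat{e}_t}^2}.
}

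The crux of the argument, and the step I expect to be most delicate, is computing the mini-batch variance $\Exps{\Bc_t}{\norms{\bar{\Delta}_{\Bc_t} - \Delta_t}^2}$, which is precisely where the factor $\rho$ enters and where the two settings diverge. In the expectation case the indices are sampled i.i.d., so the variance of the average scales as $\frac{1}{b_t}$ times the per-sample variance, giving $\Exps{\Bc_t}{\norms{\bar{\Delta}_{\Bc_t} - \Delta_t}^2} = \frac{1}{b_t}\big(\Exps{\xi}{\norms{G_{\xi}(x_t) - G_{\xi}(x_{t-1})}^2} - \norms{\Delta_t}^2\big)$ and thus $\rho = \frac{1}{b_t}$. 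In the finite-sum case $\Bc_t$ is a uniformly random subset of size $b_t$ drawn without replacement from $[n]$, so the covariance between distinct samples is nonzero; the standard finite-population variance identity then produces the correction factor $\rho = \frac{n-b_t}{(n-1)b_t}$ multiplying the population variance $\Exps{\xi}{\norms{G_{\xi}(x_t) - G_{\xi}(x_{t-1})}^2} - \norms{\Delta_t}^2$ (using the divisor-$n$ convention, which matches since $\frac{1}{n}\sum_i\norms{G_i(x_t)-G_i(x_{t-1})}^2 - \norms{\Delta_t}^2 = \frac{1}{n}\sum_i\norms{G_i(x_t)-G_i(x_{t-1}) - \Delta_t}^2$). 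Substituting either expression, splitting $\rho(\cdots)$ into the $+\rho\beta_{t-1}^2\Exps{\xi}{\cdots}$ and $-\rho\beta_{t-1}^2\norms{\Delta_t}^2$ contributions, and restoring the definitions of $\hat{\delta}_t$, $\Delta_t$, and $\hat{e}_t$ reproduces \eqref{eq:key_pro_of_vhat_t} exactly. The main obstacle is therefore the careful bookkeeping of the without-replacement covariance that yields the finite-population correction; everything else is a routine repetition of the single-sample computation, with the cross terms eliminated by independence and unbiasedness.
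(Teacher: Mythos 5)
Your proposal is correct and follows essentially the same route as the paper: the same decomposition $\hat{\delta}_t = \beta_{t-1}\hat{\delta}_{t-1} + \beta_{t-1}(\Delta_{\Bc_t} - \Delta_t) + (1-\beta_{t-1})\hat{e}_t$, elimination of all cross terms via unbiasedness and the independence of $\Bc_t$ and $\hat{\Bc}_t$, and the with/without-replacement variance factor $\rho$. The only cosmetic difference is that you work with the centered quantity $\Exps{\Bc_t}{\norms{\Delta_{\Bc_t}-\Delta_t}^2}$ and derive the finite-population correction yourself, whereas the paper keeps the raw second moment $\Exps{\Bc_t}{\norms{\Delta_{\Bc_t}}^2}$ and cites \cite[Lemma 2]{Pham2019} for the same identity.
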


Similar to Lemma \ref{le:upper_bound_new}, we can bound the variance $\Exp{\|\hat{v}_t - G(x_t)\|^2}$ of the mini-batch hybrid stochastic estimator $\hat{v}_t$ from \eqref{eq:vhat_t} in the following lemma, whose proof is in Appendix~\ref{apdx:le:upper_bound_new_batch}.
For simplicity of presentation, we choose $b_t := b \in\Nbb_{+}$ and $\hat{b}_t := \hat{b} \in\Nbb_{+}$.

\begin{lemma}\label{le:upper_bound_new_batch}
Assume that $G$ is $L$-average Lipschitz continuous.
Let $u_t := \frac{1}{\hat{b}_t}\sum_{j\in\hat{\Bc}_t}G_{\zeta_j}(x_t)$ be a mini-batch unbiased estimator of $G(x_t)$ and $\hat{v}_t$ be given by \eqref{eq:vhat_t} such that $\Bc_t$ and $\hat{\Bc}_t$ are independent mini-batches of sizes $b_t := b \in\Nbb_{+}$ and $\hat{b}_t := \hat{b} \in\Nbb_{+}$, respectively for all $t\geq 0$.
Then, we have the following upper bound on the variance $\Exp{\norms{\hat{v}_t - G(x_t)}^2}$:
\myeq{eq:vt_variance_bound_new_batch}{
\Exp{\norms{\hat{v}_t - G(x_t)}^2} \leq \omega_t \Exp{\norms{\hat{v}_0 - G(x_0)}^2} + \rho L^2\sum_{i=0}^{t-1}\omega_{i,t} \Exp{\norms{x_{i+1} - x_{i}}^2} + \hat{\rho} S_t,
}
where the expectation is taking over all the randomness $\Fc_{t+1} := \sigma(x_0,\Bc_0, \hat{\Bc}_0, \cdots, \Bc_t, \hat{\Bc}_t)$, and $\omega_t$, $\omega_{i,t}$, and $S_t$ are defined in \eqref{eq:other_quantities}.
Here, $\rho := \frac{n - b}{(n-1)b}$ if  $G(x) := \frac{1}{n}\sum_{i=1}^nG_i(x)$ is a finite-sum, and $\rho := \frac{1}{b}$, otherwise $($i.e., $G(x) := \Exps{\xi}{G_{\xi}(x)}$$)$.
\end{lemma}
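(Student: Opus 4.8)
The plan is to mirror the proof of Lemma~\ref{le:upper_bound_new} almost verbatim, replacing the single-sample one-step identity \eqref{eq:key_estimate10} by its mini-batch counterpart \eqref{eq:key_pro_of_vhat_t} from Lemma~\ref{le:key_pro_of_vhat_t}, whose only structural difference is that the factor $\rho$ now multiplies the two gradient-difference terms while the unbiased-estimator term keeps the coefficient $(1-\beta_{t-1})^2$. First I would start from \eqref{eq:key_pro_of_vhat_t} and discard the nonpositive term $-\rho\beta_{t-1}^2\norms{G(x_t) - G(x_{t-1})}^2$ to pass to an inequality. Then, using the $L$-average Lipschitz continuity of $G$, I would bound $\Exps{\xi}{\norms{G_{\xi}(x_t) - G_{\xi}(x_{t-1})}^2} \leq L^2\norms{x_t - x_{t-1}}^2$, which attaches the factor $\rho L^2$ to the displacement term.

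Next I would take the full expectation over $\Fc_{t+1}$ and set $A_t^2 := \Exp{\norms{\hat{v}_t - G(x_t)}^2}$ and $B_{t-1}^2 := \Exp{\norms{x_t - x_{t-1}}^2}$, so that the one-step bound becomes the scalar recursion
\myeqn{
A_t^2 \leq \beta_{t-1}^2A_{t-1}^2 + \rho L^2\beta_{t-1}^2B_{t-1}^2 + (1-\beta_{t-1})^2\Exps{\hat{\Bc}_t}{\norms{u_t - G(x_t)}^2}.
}
This has exactly the same shape as the recursion in the proof of Lemma~\ref{le:upper_bound_new}, except for the extra $\rho$ on the middle term and the fact that the last term is the variance of a mini-batch unbiased estimator rather than a single-sample one.

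Then I would unroll this recursion by induction on $t$, exactly as in the derivation leading to \eqref{eq:key_estimate1d}, using the convention $\prod_{i=t+1}^{t}\beta_i^2 = 1$. Each backward substitution peels off one factor $\beta_{t-1}^2$ and accumulates one displacement term and one variance term, so after $t$ steps the initial term carries $\omega_t = \prod_{i=1}^t\beta_{i-1}^2$, the $i$-th displacement term carries $\rho L^2\omega_{i,t}$ with $\omega_{i,t} = \prod_{j=i+1}^t\beta_{j-1}^2$, and the variance contributions assemble into the sum $S_t$ of \eqref{eq:other_quantities}. Finally, since $u_t$ is a mini-batch unbiased estimator of batch size $\hat{b}$, its variance factors as $\Exps{\hat{\Bc}_t}{\norms{u_t - G(x_t)}^2} = \hat{\rho}\,\Exps{\zeta}{\norms{G_{\zeta}(x_t) - G(x_t)}^2}$ with the mini-batch reduction factor $\hat{\rho}$; pulling $\hat{\rho}$ out of every variance term produces the global factor $\hat{\rho}$ in front of $S_t$ and yields exactly \eqref{eq:vt_variance_bound_new_batch}.

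The computation is essentially bookkeeping, so the main point of care is keeping the two reduction factors attached to the correct groups of terms throughout the unrolling: $\rho$ must stay with the gradient-difference (displacement) terms, while $\hat{\rho}$ must stay with the unbiased-estimator variance terms, and the two never interfere because $\Bc_t$ and $\hat{\Bc}_t$ are assumed independent. The only genuinely new ingredient relative to Lemma~\ref{le:upper_bound_new} is the one-step identity \eqref{eq:key_pro_of_vhat_t}, which is already furnished by Lemma~\ref{le:key_pro_of_vhat_t}; granting that, the remainder of the argument is the same routine induction, so I expect no substantial obstacle beyond tracking $\rho$ and $\hat{\rho}$ accurately.
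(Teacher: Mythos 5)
Your proposal is correct and follows essentially the same route as the paper's own proof: start from the one-step identity \eqref{eq:key_pro_of_vhat_t}, drop the nonpositive term, apply the $L$-average Lipschitz bound to attach $\rho L^2$ to the displacement terms, unroll the resulting scalar recursion exactly as in Lemma~\ref{le:upper_bound_new}, and factor the mini-batch reduction $\hat{\rho}$ out of the unbiased-estimator variance terms to obtain $\hat{\rho}S_t$. The only cosmetic difference is that the paper invokes \cite[Lemma 2]{Pham2019} (as an inequality) for the mini-batch variance factorization, whereas you state it as an identity; either version suffices for the upper bound.
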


The theoretical results developed in Section~\ref{sec:stochastic_estimators} are self-contained.
They can be specified to develop different stochastic optimization methods, including first-order and second-order schemes, for solving \eqref{eq:ncvx_prob} and \eqref{eq:finite_sum}, and other related problems.
In the \nhan{following} sections, we only exploit these properties for $G = \nabla{f}$, the first-order derivative of $f$, to develop stochastic gradient-type methods for solving both \eqref{eq:ncvx_prob} and \eqref{eq:finite_sum}.

\beforesec
\section{Proximal Hybrid SARAH-SGD Algorithms}\label{sec:sgd_algs}
\aftersec
In this section, we utilize our hybrid stochastic estimator above with $G_{\xi}(x) := \nabla{f}_{\xi}(x)$ to develop new stochastic gradient algorithms for solving \eqref{eq:ncvx_prob} and its finite-sum setting \eqref{eq:finite_sum}.

\beforesubsec
\subsection{\bf The single-loop stochastic proximal gradient algorithm}
\aftersubsec
Our first algorithm is a single-loop stochastic proximal gradient scheme for solving \eqref{eq:ncvx_prob}.
This algorithm is described in detail as in Algorithm~\ref{alg:A1}.

\begin{algorithm}[ht!]\caption{(\textbf{Prox}imal \textbf{H}ybrid  \textbf{S}tochastic \textbf{G}radient \textbf{D}escent (ProxHSGD) algorithm)}\label{alg:A1}
\normalsize
\begin{algorithmic}[1]
   \State{\bfseries Initialization:} An arbitrarily initial point $x^0 \in\dom{F}$.
   \vspace{0.5ex}  
   \State\hspace{0ex}\label{step:o1} Input the parameters $\tilde{b} \in \Nbb_{+}$, $\beta_t\in (0, 1)$, $\gamma_t \in (0, 1]$, and $\eta_t > 0$ (will be specified later).{\!\!\!}
   \vspace{0.65ex}
   \State\hspace{0ex}\label{step:o2} Generate an unbiased estimator $v_0 := \frac{1}{\tilde{b}}\sum_{\tilde{\xi}_i\in\widetilde{\Bc}}\nabla{f}_{\tilde{\xi}_i}(x_0)$ at $x_0$ using a mini-batch $\widetilde{\Bc}$.
   \vspace{0.65ex}   
   \State\hspace{0ex}\label{step:o3} Update $\widehat{x}_1 := \tprox{\eta_0\psi}{x_0 - \eta_0v_0}$ and $x_1 := (1-\gamma_0)x_0 + \gamma_0\widehat{x}_1$.
   \vspace{0.65ex}   
   \State\hspace{0ex}\label{step:o4}{\bfseries For $t := 1,\cdots,m$ do}
   \vspace{0.5ex}   
   \State\hspace{3ex}\label{step:i1} Generate a proper sample pair $(\xi_t, \zeta_t)$ independently (single sample or mini-batch).
   \vspace{0.65ex}   
   \State\hspace{3ex}\label{step:i2} Evaluate $v_{t}  := \beta_{t-1}v_{t-1}  + \beta_{t-1}\left[\nabla{f}_{\xi_t}(x_{t}) - \nabla{f}_{\xi_t}(x_{t-1})\right] + (1-\beta_{t-1})\nabla{f}_{\zeta_t}(x_{t})$.
   \vspace{0.65ex}  
   \State\hspace{3ex}\label{step:i3} Update $\widehat{x}_{t+1} := \tprox{\eta_t\psi}{x_{t} - \eta_t v_{t}}$ and $x_{t+1} := (1-\gamma_t)x_t + \gamma_t\widehat{x}_{t+1}$.
   \vspace{0.5ex}   
   \State\hspace{0ex}{\bfseries EndFor}
   \vspace{0.5ex}   
   \State\hspace{0ex}\label{step:o5}Choose $\overline{x}_m$ from $\set{x_0, x_1, \cdots, x_m}$ (at random or deterministic, specified later). 
\end{algorithmic}
\end{algorithm}

\noindent Let us discuss the differences between Algorithm~\ref{alg:A1} and existing SGD methods:
\begin{compactitem}
\item[$\bullet$] 
Firstly, Algorithm~\ref{alg:A1} starts with a relatively large mini-batch $\widetilde{\Bc}$ to compute an initial estimate for the initial gradient $\nabla{f}(x_0)$ at $x_0$.
This is quite different from existing methods where they often use single-sample, mini-batch, or increasing mini-batch sizes for the whole algorithms (e.g., \cite{ghadimi2016mini}), and do not separate into two phases as in Algorithm~\ref{alg:A1}: 
\begin{compactitem}
\item[$\diamond$] Phase 1: Step~\ref{step:o2}: Find an appropriate initial search direction $v_0$.
\item[$\diamond$] Phase 2: Step~\ref{step:o3} to Step~\ref{step:i3}: Update the iterate sequence $\set{x_t}$.
\end{compactitem}
The idea is to find a good stochastic approximation $v_0$ for $\nabla{f}(x_0)$ as an initial search direction to guide the algorithm moving into a good direction.

\item[$\bullet$] 
Secondly, Algorithm~\ref{alg:A1} adopts the idea of ProxSARAH in \cite{Pham2019} with two steps in $\widehat{x}_t$ and $x_t$ to handle the composite setting.
This is different from existing methods as well as methods for non-composite problems where we use two step-sizes $\eta_t$ and $\gamma_t$.
While the first update on $\widehat{x}_t$ is a standard proximal gradient step, the second one on $x_t$ is an averaging step.
If $\psi = 0$, i.e., in the non-composite problems, then Steps~\ref{step:o3} and~\ref{step:i3} become{\!\!}
\myeqn{
x_{t+1} := x_t - \hat{\eta}_tv_t,~~~~\text{where}~~~\hat{\eta}_t := \gamma_t\eta_t.
}
Therefore, the product $\gamma_t\eta_t$ can be viewed as a combined step-size of Algorithm~\ref{alg:A1}.
Note that by using $\widetilde{\Grad}_{\eta_t}(x_t)$ to approximate the gradient mapping $\Grad_{\eta}$ defined by \eqref{eq:grad_map}, we can rewrite the main step of Algorithm~\ref{alg:A1} as
\myeqn{
x_{t+1} := x_t - \hat{\eta}_t \widetilde{\Grad}_{\eta_t}(x_t),~~\text{where}~~\widetilde{\Grad}_{\eta_t}(x_t) := \tfrac{1}{\eta_t}\left(x_t -  \tprox{\eta_t\psi}{x_{t} - \eta_t v_{t}}\right)~~\text{and}~~\hat{\eta}_t := \gamma_t\eta_t.
}

\item[$\bullet$] 
Thirdly, another main difference between Algorithm~\ref{alg:A1} and existing methods is at Step~\ref{step:i2}, where we use our hybrid stochastic gradient estimator $v_t$.
In addition, we will show in the sequel that by using different step-sizes, Algorithm~\ref{alg:A1} leads to different variants.
\end{compactitem}
Note that Algorithm~\ref{alg:A1} has only one loop as standard SGD or SAGA.
Hitherto, SAGA has been developed to solve the finite-sum setting \eqref{eq:finite_sum}, and there has existed no variant for solving \eqref{eq:ncvx_prob} yet.
Algorithm~\ref{alg:A1} can solve both \eqref{eq:ncvx_prob} and \eqref{eq:finite_sum}.
Moreover, it does not use an $n\times p$-table to store past gradient components as in SAGA so that it almost has the same memory requirement as in SGD.
However, at each iteration, it requires three stochastic gradient evaluations instead of one as in SGD for the single-sample case.
Therefore, its per-iteration cost can be viewed as a mini-batch SGD scheme of the batch size $3$.

\beforesubsec
\subsection{\bf One-iteration analysis}
\aftersubsec
We first prove the following two lemmas to provide key estimates for convergence analysis of Algorithm~\ref{alg:A1}.
For the sake of presentation, the proof of these lemmas is moved to Appendices~\ref{apdx:le:upper_bound_new} and \ref{apdx:le:descent_pro}, respectively.

\begin{lemma}\label{le:upper_bound_new2}
Assume that Assumptions~\ref{as:A0}, \ref{as:A1}, and \ref{as:A1b} hold.
Let $\sets{(x_t, \widehat{x}_{t})}$ be the sequence generated by Algorithm \ref{alg:A1} and  $\Grad_{\eta_t}$ be the gradient mapping of \eqref{eq:ncvx_prob} defined by \eqref{eq:grad_map}. 
Then
\myeq{eq:upper_bound_new0}{
\begin{array}{lcl}
\Exp{F(x_{t+1})} &\leq& \Exp{F(x_t)} - \frac{q_t\eta_t^2}{2}\Exp{\norms{ \Grad_{\eta_t}(x_t)}^2}  +  \frac{\theta_t}{2} \Exp{\norms{\nabla{f}(x_t) - v_t}^2}  \vspace{1.25ex}\\
&& - {~}  \frac{\kappa_t}{2}\Exp{\norms{ \widehat{x}_{t+1} - x_t }^2}  - \frac{1}{2}\Exp{\tilde{\sigma}_t^2}.
\end{array}{\!\!\!\!}
}
where $\sets{c_t}$, $\sets{r_t}$, and $\sets{q_t}$ are any given positive sequences,  
$\tilde{\sigma}_t^2 := \frac{\gamma_t}{c_t}\norms{\nabla f(x_t) - v_t  - c_t(\widehat{x}_{t+1} - x_t)}^2 \geq 0$, $S_t$ is defined in \eqref{eq:other_quantities}, and $\theta_t$ and $\kappa_t$ are given by
\myeq{eq:theta_kappa}{
\theta_t:= \frac{\gamma_t}{c_t} + (1+r_t)q_t\eta_t^2~~~~\text{and}~~~~\kappa_t := \frac{2\gamma_t}{\eta_t} - L\gamma_t^2 - \gamma_tc_t - q_t\left(1 + \frac{1}{r_t}\right).
}
\end{lemma}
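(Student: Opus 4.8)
The plan is to derive a deterministic one-step descent inequality and then take full expectation over $\Fc_{t+1}$. First I would invoke the $L$-smoothness of the expected-value function $f$, which follows from the $L$-average smoothness of Assumption~\ref{as:A1} via Jensen's inequality, to write
\[
f(x_{t+1}) \leq f(x_t) + \iprods{\nabla{f}(x_t), x_{t+1} - x_t} + \tfrac{L}{2}\norms{x_{t+1} - x_t}^2 .
\]
Since the averaging step gives $x_{t+1} - x_t = \gamma_t(\widehat{x}_{t+1} - x_t)$, and since convexity of $\psi$ applied to the same convex combination yields $\psi(x_{t+1}) \leq (1-\gamma_t)\psi(x_t) + \gamma_t\psi(\widehat{x}_{t+1})$, adding these bounds the value $F(x_{t+1})$ in terms of $\widehat{x}_{t+1} - x_t$. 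The proximal step $\widehat{x}_{t+1} = \tprox{\eta_t\psi}{x_t - \eta_t v_t}$ produces the optimality condition $-v_t - \tfrac{1}{\eta_t}(\widehat{x}_{t+1} - x_t) \in \partial\psi(\widehat{x}_{t+1})$; feeding this into the subgradient inequality for $\psi$ eliminates $\psi(\widehat{x}_{t+1})$ and delivers the combined estimate
\[
F(x_{t+1}) \leq F(x_t) + \gamma_t\iprods{\nabla{f}(x_t) - v_t, \widehat{x}_{t+1} - x_t} + \Big(\tfrac{L\gamma_t^2}{2} - \tfrac{\gamma_t}{\eta_t}\Big)\norms{\widehat{x}_{t+1} - x_t}^2 .
\]

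Next I would dispose of the cross term by completing the square. Expanding the definition of $\tilde{\sigma}_t^2$ gives the exact identity
\[
\gamma_t\iprods{\nabla{f}(x_t) - v_t, \widehat{x}_{t+1} - x_t} = \tfrac{\gamma_t}{2c_t}\norms{\nabla{f}(x_t) - v_t}^2 + \tfrac{\gamma_t c_t}{2}\norms{\widehat{x}_{t+1} - x_t}^2 - \tfrac{1}{2}\tilde{\sigma}_t^2 .
\]
Substituting this and collecting the $\norms{\widehat{x}_{t+1} - x_t}^2$ terms leaves the coefficient $-\tfrac{1}{2}\big(\tfrac{2\gamma_t}{\eta_t} - \gamma_t c_t - L\gamma_t^2\big)$ on that quantity, the factor $\tfrac{\gamma_t}{2c_t}$ on $\norms{\nabla{f}(x_t) - v_t}^2$, and the term $-\tfrac{1}{2}\tilde{\sigma}_t^2$.

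The remaining and most delicate step is to surface the true gradient mapping $\Grad_{\eta_t}(x_t)$. Writing $\bar{x}_{t+1} := \tprox{\eta_t\psi}{x_t - \eta_t\nabla{f}(x_t)}$, so that $\eta_t^2\norms{\Grad_{\eta_t}(x_t)}^2 = \norms{\bar{x}_{t+1} - x_t}^2$, I would apply Young's inequality with parameter $r_t$ to the splitting $\bar{x}_{t+1} - x_t = (\widehat{x}_{t+1} - x_t) - (\widehat{x}_{t+1} - \bar{x}_{t+1})$, and then use non-expansiveness of $\prox_{\eta_t\psi}$, which yields $\norms{\widehat{x}_{t+1} - \bar{x}_{t+1}} \leq \eta_t\norms{v_t - \nabla{f}(x_t)}$, to obtain
\[
\eta_t^2\norms{\Grad_{\eta_t}(x_t)}^2 \leq \big(1 + \tfrac{1}{r_t}\big)\norms{\widehat{x}_{t+1} - x_t}^2 + (1+r_t)\eta_t^2\norms{v_t - \nabla{f}(x_t)}^2 .
\]
Multiplying by $-\tfrac{q_t}{2}$ and using it to replace the portion $-\tfrac{q_t}{2}\big(1+\tfrac{1}{r_t}\big)\norms{\widehat{x}_{t+1} - x_t}^2$ carved out of the $\norms{\widehat{x}_{t+1} - x_t}^2$ coefficient produces exactly the term $-\tfrac{q_t\eta_t^2}{2}\norms{\Grad_{\eta_t}(x_t)}^2$, raises the $\norms{\nabla{f}(x_t) - v_t}^2$ coefficient to $\tfrac{\theta_t}{2}$, and leaves residual coefficient $-\tfrac{\kappa_t}{2}$, with $\theta_t$ and $\kappa_t$ as defined in \eqref{eq:theta_kappa}. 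Taking full expectation over $\Fc_{t+1}$ then gives \eqref{eq:upper_bound_new0}.

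The main obstacle I anticipate is the exact coefficient bookkeeping: the free parameters $c_t$, $r_t$, and $q_t$ must be threaded consistently through the completing-the-square identity and the Young-plus-non-expansiveness estimate so that the three residual coefficients coalesce precisely into $-\tfrac{q_t\eta_t^2}{2}$, $\tfrac{\theta_t}{2}$, and $-\tfrac{\kappa_t}{2}$. In particular, one must apply Young's inequality in the orientation that attaches the factor $\big(1+\tfrac{1}{r_t}\big)$ to $\norms{\widehat{x}_{t+1} - x_t}^2$ and $(1+r_t)$ to the variance term, not the reverse, since the definitions of $\theta_t$ and $\kappa_t$ in \eqref{eq:theta_kappa} depend on this choice.
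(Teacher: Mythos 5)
Your proposal is correct and follows essentially the same route as the paper's own proof: smoothness of $f$ plus convexity of $\psi$ plus the prox optimality condition to get the one-step estimate, the exact completing-the-square identity with parameter $c_t$ producing $\tilde{\sigma}_t^2$, the gradient-mapping bound via non-expansiveness of $\prox_{\eta_t\psi}$ combined with Young's inequality in the correct orientation (factor $1+\tfrac{1}{r_t}$ on $\norms{\widehat{x}_{t+1}-x_t}^2$), and the final weighting by $q_t$. The only cosmetic differences are that you apply Young's inequality at the vector level rather than via the triangle inequality first, and you take the full expectation at the end rather than midway, neither of which changes the argument.
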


\begin{lemma}\label{le:descent_pro}
Assume that Assumptions~\ref{as:A0}, \ref{as:A1}, and \ref{as:A1b} hold.
Let $\sets{(x_t, \widehat{x}_{t})}$ be the sequence generated by Algorithm \ref{alg:A1} and $\Grad_{\eta_t}$ be the gradient mapping  of \eqref{eq:ncvx_prob} defined by \eqref{eq:grad_map}. 
Given $\alpha_t > 0$, let $V$ be a Lyapunov function defined by
\myeq{eq:Lyapunov_func}{
V(x_t) := \Exp{F(x_t)} + \frac{\alpha_t}{2}\Exp{\norms{v_t - \nabla{f}(x_t)}^2}.
}
Assume that
\myeq{eq:key_cond1001}{
\alpha_t - \beta_t^2\alpha_{t+1} - \theta_t \geq 0 ~~~\text{and}~~\kappa_t - \alpha_{t+1}\beta_t^2\gamma_t^2L^2 \geq 0.
}
Then, the following estimate holds
\myeq{eq:key_bound1001a}{
V(x_{t+1}) \leq V(x_t) -  \frac{q_t\eta_t^2}{2}\Exp{\norms{ \Grad_{\eta_t}(x_t)}^2}  +  \frac{1}{2}\alpha_{t+1}(1-\beta_t)^2\sigma_{t+1}^2,
}
where $\sigma_t^2 := \Exps{\zeta_t}{\norms{\nabla{f}_{\zeta_t}(x_t) - \nabla{f}(x_t)}^2}$.
As a consequence,  for any $m\geq 0$, we also have
\myeq{eq:key_bound1001}{
\begin{array}{ll}
\displaystyle\frac{1}{2}\sum_{t=0}^mq_t\eta_t^2\Exp{\norms{ \Grad_{\eta_t}(x_t)}^2}  &\leq F(x_0)  - F^{\star} + \displaystyle\frac{\alpha_0}{2}\Exp{\norms{v_0 - \nabla{f}(x_0)}^2} \vspace{1ex}\\
& + {~} \displaystyle \frac{1}{2}\sum_{t=0}^m\alpha_{t+1}(1-\beta_t)^2\sigma_{t+1}^2.
\end{array}
}
\end{lemma}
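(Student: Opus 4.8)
The plan is to combine the one-iteration descent bound \eqref{eq:upper_bound_new0} of Lemma~\ref{le:upper_bound_new2} with the recursive variance identity \eqref{eq:key_estimate10} of Lemma~\ref{le:key_estimate10} specialized to $G = \nabla f$, choosing the Lyapunov weights so that the two conditions in \eqref{eq:key_cond1001} are exactly what is needed to annihilate the two ``bad'' terms. First I would derive a one-step variance recursion: applying \eqref{eq:key_estimate10} at index $t+1$ with $G=\nabla f$, bounding $\Exps{\xi_{t+1}}{\norms{\nabla f_{\xi_{t+1}}(x_{t+1}) - \nabla f_{\xi_{t+1}}(x_t)}^2}\le L^2\Exp{\norms{x_{t+1}-x_t}^2}$ via the $L$-average smoothness \eqref{eq:L_smooth}, dropping the nonpositive term $-\beta_t^2\norms{\nabla f(x_{t+1})-\nabla f(x_t)}^2$, substituting $x_{t+1}-x_t = \gamma_t(\widehat{x}_{t+1}-x_t)$ from Step~\ref{step:i3}, and taking total expectation, I obtain
\begin{equation*}
\Exp{\norms{v_{t+1} - \nabla f(x_{t+1})}^2} \le \beta_t^2\Exp{\norms{v_t - \nabla f(x_t)}^2} + \beta_t^2\gamma_t^2 L^2\Exp{\norms{\widehat{x}_{t+1}-x_t}^2} + (1-\beta_t)^2\sigma_{t+1}^2 .
\end{equation*}

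Next I would form $V(x_{t+1}) = \Exp{F(x_{t+1})} + \tfrac{\alpha_{t+1}}{2}\Exp{\norms{v_{t+1}-\nabla f(x_{t+1})}^2}$, insert \eqref{eq:upper_bound_new0} for the first term and the displayed recursion (scaled by $\alpha_{t+1}/2$) for the second, and collect like terms. Comparing against $V(x_t) = \Exp{F(x_t)} + \tfrac{\alpha_t}{2}\Exp{\norms{v_t-\nabla f(x_t)}^2}$, the coefficient of $\Exp{\norms{v_t-\nabla f(x_t)}^2}$ becomes $\tfrac12(\theta_t + \beta_t^2\alpha_{t+1} - \alpha_t)$ and the coefficient of $\Exp{\norms{\widehat{x}_{t+1}-x_t}^2}$ becomes $\tfrac12(\beta_t^2\gamma_t^2 L^2\alpha_{t+1} - \kappa_t)$. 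The first inequality in \eqref{eq:key_cond1001} makes the former nonpositive and the second inequality makes the latter nonpositive; discarding also the nonnegative term $\tfrac12\Exp{\tilde{\sigma}_t^2}$ yields exactly the descent estimate \eqref{eq:key_bound1001a}.

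Finally, to get \eqref{eq:key_bound1001} I would telescope \eqref{eq:key_bound1001a} over $t=0,\dots,m$, giving $\tfrac12\sum_{t=0}^m q_t\eta_t^2\Exp{\norms{\Grad_{\eta_t}(x_t)}^2} \le V(x_0) - V(x_{m+1}) + \tfrac12\sum_{t=0}^m\alpha_{t+1}(1-\beta_t)^2\sigma_{t+1}^2$. Since $x_0$ is deterministic, $V(x_0) = F(x_0) + \tfrac{\alpha_0}{2}\Exp{\norms{v_0-\nabla f(x_0)}^2}$, and since the variance term in $V$ is nonnegative, $V(x_{m+1}) \ge \Exp{F(x_{m+1})} \ge F^{\star}$ by Assumption~\ref{as:A0}(b); bounding $-V(x_{m+1})\le -F^{\star}$ produces \eqref{eq:key_bound1001}.

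I expect the only real subtlety to be bookkeeping rather than a genuine obstacle: the specialization of Lemma~\ref{le:key_estimate10} to $\nabla f$ with the correct index shift $t\mapsto t+1$, together with tracking the $\gamma_t^2$ factor coming from the averaging step $x_{t+1}=(1-\gamma_t)x_t+\gamma_t\widehat{x}_{t+1}$, must align precisely with the coefficients $\theta_t$ and $\kappa_t$ from \eqref{eq:theta_kappa} so that the two inequalities in \eqref{eq:key_cond1001} are exactly the conditions closing the argument. No new estimate beyond Lemmas~\ref{le:key_estimate10} and~\ref{le:upper_bound_new2} and Assumptions~\ref{as:A0}--\ref{as:A1b} is needed.
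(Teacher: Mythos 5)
Your proposal is correct and follows essentially the same route as the paper's own proof: specializing the variance identity \eqref{eq:key_estimate10} to $G=\nabla f$ at index $t+1$, invoking $L$-average smoothness and the averaging step to introduce the $\beta_t^2\gamma_t^2L^2$ factor, adding the scaled recursion to \eqref{eq:upper_bound_new0} so that the two conditions in \eqref{eq:key_cond1001} exactly neutralize the coefficients of $\Exp{\norms{v_t-\nabla f(x_t)}^2}$ and $\Exp{\norms{\widehat{x}_{t+1}-x_t}^2}$, and then telescoping with $V(x_{m+1})\geq F^{\star}$. The bookkeeping you flag (index shift and the $\gamma_t^2$ factor) is handled exactly as in the paper, so no gap remains.
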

Note that if $\beta_t = 1$ for all $t\geq 0$, then our hybrid stochastic estimator $v_t$ reduces to the SARAH estimator \cite{nguyen2017sarah}.
In this case, the estimate~\eqref{eq:key_bound1001a} becomes $V(x_{t+1}) \leq V(x_t) -  \frac{q_t\eta_t^2}{2}\Exp{\norms{ \Grad_{\eta_t}(x_t)}^2}$, which shows a monotonic non-increase of $\set{V(x_t)}$.
This estimate can be used to analyze the convergence of the double-loop SARAH-based algorithms in \cite{Nguyen2019_SARAH,Pham2019}.

\beforesubsec
\subsection{\bf Convergence analysis of Algorithm~\ref{alg:A1}}
\aftersubsec
We consider two variants of Algorithm \ref{alg:A1}: constant step-sizes and adaptive step-sizes.

\beforesubsubsec
\subsubsection{\mytxtbi{The constant step-size case}}
\aftersubsubsec
The following theorem shows the convergence of Algorithm~\ref{alg:A1}  with constant step-sizes and its oracle complexity bounds.

\revise{
\begin{theorem}\label{th:constant_stepsize_convergence}
Assume that Assumptions~\ref{as:A0}, \ref{as:A1}, and \ref{as:A1b} hold.
Let $\set{x_t}_{t=0}^m$ be generated by Algorithm~\ref{alg:A1} to solve \eqref{eq:ncvx_prob} using the following constant weight $\beta_t$ and step-sizes $\gamma_t$ and $\eta_t$:
\myeq{eq:constant_step_sizes_A}{
\left\{\begin{array}{lclcl}
\beta_t &=& \beta &:=& 1 - \frac{1}{\sqrt{\tilde{b}(m+1)}}, \vspace{1ex}\\
\gamma_t &=& \gamma &:=&  \frac{3}{\sqrt{13}[\tilde{b}(m+1)]^{1/4}}, \vspace{1ex}\\
\eta_t &=& \eta &:=& \frac{2}{(3+\gamma)L}.
\end{array}\right.
}
Then the following statements hold:
\begin{compactitem}
\vspace{0.5ex}
\item[$\mathrm{(a)}$] The parameters $\beta$, $\gamma$, and $\eta$ satisfy $\beta \in (0, 1)$, $\gamma \in (0, 1)$, and $\frac{1}{2L} \leq \eta \leq \frac{2}{3L}$.

\vspace{1ex}
\item[$\mathrm{(b)}$] Let $\overline{x}_m \sim \Uni{\set{x_t}_{t=0}^m}$ be the output of Algorithm~\ref{alg:A1}.
Then, we have
\myeq{eq:bounds_of_stepsizes}{
\Exp{\norms{ \Grad_{\eta}(\bar{x}_m)}^2} \leq \frac{16\sqrt{13}L\tilde{b}^{1/4}}{3(m+1)^{3/4}}\left[F(x_0)  - F^{\star}\right] + \frac{208\sigma^2}{9\sqrt{\tilde{b}(m+1)}}.
}
\item[$\mathrm{(c)}$]
If we choose $\tilde{b} := c_1^2(m+1)^{1/3}$ for some $c_1 \geq \frac{1}{(m+1)^{2/3}}$, then \eqref{eq:bounds_of_stepsizes} reduces to
\myeq{eq:bounds_of_stepsizes2}{
\Exp{\norms{ \Grad_{\eta}(\bar{x}_m)}^2} \leq \frac{\Delta_0}{(m+1)^{2/3}},
}
where $\Delta_0 :=  \frac{16\sqrt{13c_1}L}{3}\left[ F(x_0)  - F^{\star}\right] + \frac{208\sigma^2}{9c_1}$.
Therefore, for any tolerance $\varepsilon > 0$, the number of iterations $m$ to obtain $\overline{x}_m$ such that $\Exp{\norms{\Grad_{\eta}(\overline{x}_m)}^2} \leq \varepsilon^2$ is at most
\myeqn{
m := \left\lfloor \frac{\Delta_0^{3/2}}{\varepsilon^3}\right\rfloor.  
}
This is also the total number of proximal operations $\prox_{\eta\psi}$.
In addition, the total number $\Tc_m$ of stochastic gradient evaluations $\nabla{f_{\xi}}(x_t)$ is at most
\myeqn{
\Tc_m := \left\lfloor \frac{c_1^2\Delta_0^{1/2}}{\varepsilon} + \frac{3\Delta_0^{3/2}}{\varepsilon^3}\right\rfloor. 
}
\end{compactitem}
\end{theorem}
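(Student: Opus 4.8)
The plan is to handle parts (a)--(c) in order, with essentially all of the work living in (b). Part (a) is a direct verification: since $\tilde{b}\in\Nbb_{+}$ and $m\geq 0$ give $\tilde{b}(m+1)\geq 1$, one reads off $\beta = 1 - [\tilde{b}(m+1)]^{-1/2}\in[0,1)$ and $\gamma = 3/(\sqrt{13}\,[\tilde{b}(m+1)]^{1/4})$, which lies below $1$ because $\tilde{b}(m+1)\geq 1 > 81/169$. As $\gamma\in(0,1)$ forces $3 < 3+\gamma < 4$, the definition $\eta = 2/[(3+\gamma)L]$ immediately yields $\tfrac{1}{2L}\leq\eta\leq\tfrac{2}{3L}$.

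For (b) the idea is to invoke Lemma~\ref{le:descent_pro} with \emph{constant} auxiliary sequences $c_t\equiv c$, $r_t\equiv r$, $q_t\equiv q$, and $\alpha_t\equiv\alpha$, so that the two feasibility conditions in \eqref{eq:key_cond1001} collapse to the scalar inequalities $\alpha(1-\beta^2)\geq \tfrac{\gamma}{c}+(1+r)q\eta^2$ and $\tfrac{2\gamma}{\eta}-L\gamma^2-\gamma c - q(1+\tfrac1r)\geq \alpha\beta^2\gamma^2L^2$. The first structural fact I would exploit is that $\eta = 2/[(3+\gamma)L]$ is tailored to make $\tfrac{2\gamma}{\eta}-L\gamma^2 = 3\gamma L$ exactly, clearing the awkward quadratic term from $\kappa$. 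The second is that the weight and step-size in \eqref{eq:constant_step_sizes_A} are tied by the clean identity $1-\beta = \tfrac{13}{9}\gamma^2$, equivalently $\sqrt{\tilde{b}(m+1)} = 9/(13\gamma^2)$. With these two identities in hand, I would verify that the closed-form choices $q\eta^2 = \tfrac{\gamma}{8L}$ and $\alpha = \tfrac{1}{L\gamma}$ satisfy both inequalities for a fixed admissible pair, e.g. $c=L$ and $r=1$, using only $\beta\in(0,1)$; this is exactly where the constants $13$, $9$, and $8$ are forced.

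Plugging these choices into the telescoped estimate \eqref{eq:key_bound1001} leaves $\tfrac{(m+1)q\eta^2}{2}\Exp{\norms{\Grad_\eta(\overline{x}_m)}^2}$ on the left, after using $\overline{x}_m\sim\Uni{\{x_t\}_{t=0}^m}$ to turn the sum into $(m+1)$ times the expected value at $\overline{x}_m$. On the right I would bound the initial term by $\Exp{\norms{v_0-\nabla f(x_0)}^2}\leq \sigma^2/\tilde{b}$ (the variance of the size-$\tilde{b}$ minibatch estimator $v_0$, from Assumption~\ref{as:A1b}) and each $\sigma_{t+1}^2\leq\sigma^2$. A pleasant cancellation then occurs: because $(1-\beta)^2 = 1/[\tilde{b}(m+1)]$, the initial-variance term and the running-variance sum are of the same order, and after dividing by $\tfrac{(m+1)q\eta^2}{2}=\tfrac{(m+1)\gamma}{16L}$ and re-substituting $\sqrt{\tilde{b}(m+1)}=9/(13\gamma^2)$ and the definition of $\gamma$, the two surviving terms assemble into precisely \eqref{eq:bounds_of_stepsizes}.

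Part (c) is then bookkeeping. Substituting $\tilde{b}=c_1^2(m+1)^{1/3}$ (so $\tilde{b}^{1/4}=c_1^{1/2}(m+1)^{1/12}$ and $\sqrt{\tilde{b}(m+1)}=c_1(m+1)^{2/3}$) collapses both terms of \eqref{eq:bounds_of_stepsizes} to a common $(m+1)^{-2/3}$ rate with constant $\Delta_0$, giving \eqref{eq:bounds_of_stepsizes2}; inverting $\Delta_0/(m+1)^{2/3}\leq\varepsilon^2$ yields $m=\lfloor\Delta_0^{3/2}\varepsilon^{-3}\rfloor$, which also counts the proximal steps (one per iteration). For the gradient count I would add the one-time cost $\tilde{b}=c_1^2\Delta_0^{1/2}\varepsilon^{-1}$ of forming $v_0$ to the $3$ evaluations per iteration, producing $\Tc_m = c_1^2\Delta_0^{1/2}\varepsilon^{-1}+3\Delta_0^{3/2}\varepsilon^{-3}$. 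I expect the real obstacle to be the simultaneous feasibility check in (b): one must pin down $(c,r,q,\alpha)$ so that \emph{both} inequalities of \eqref{eq:key_cond1001} hold for \emph{every} admissible $\gamma$ while still reproducing the exact constants $\tfrac{16\sqrt{13}}{3}$ and $\tfrac{208}{9}$, and it is the identity $1-\beta=\tfrac{13}{9}\gamma^2$ together with the $\eta$-induced simplification $\tfrac{2\gamma}{\eta}-L\gamma^2=3\gamma L$ that makes this balancing act possible.
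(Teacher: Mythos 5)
Your proposal is correct and takes essentially the same route as the paper's proof: both invoke Lemma~\ref{le:descent_pro} with $c_t := L$, $r_t := 1$, a constant $\alpha$ (your $\alpha = \tfrac{1}{L\gamma}$ is literally the paper's $\alpha = \tfrac{\sqrt{13}[\tilde{b}(m+1)]^{1/4}}{3L}$), the initial mini-batch bound $\Exp{\norms{v_0 - \nabla{f}(x_0)}^2} \leq \sigma^2/\tilde{b}$, and the same telescoping of \eqref{eq:key_bound1001}, while the identities you isolate ($\tfrac{2\gamma}{\eta} - L\gamma^2 = 3\gamma L$ and $1-\beta = \tfrac{13}{9}\gamma^2$) are exactly the balancing the paper performs when it equates $\tfrac{1}{L\alpha} = \tfrac{9L\alpha}{13\sqrt{\tilde{b}(m+1)}}$. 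The only cosmetic difference is your normalization $q\eta^2 = \tfrac{\gamma}{8L}$ versus the paper's $q = \tfrac{L\gamma}{2}$ (the paper applies $\eta^{-2} \leq 4L^2$ only at the final step), and both yield the identical constants $\tfrac{16\sqrt{13}}{3}$ and $\tfrac{208}{9}$ in \eqref{eq:bounds_of_stepsizes}.
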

}

\beforepara
\paragraph{\mytxtbi{Oracle complexity comparison:}}
\revise{
Before proving Theorem~\ref{th:constant_stepsize_convergence}, let us discuss the oracle complexity of Algorithm~\ref{alg:A1} derived from Theorem~\ref{th:constant_stepsize_convergence} and compare it with existing results.
\begin{compactitem}
\item[$\bullet$] The bound  \eqref{eq:bounds_of_stepsizes2} shows that the convergence rate of Algorithm~\ref{alg:A1} is $\BigO{\frac{1}{m^{2/3}}}$, which is better than $\BigO{\frac{1}{m^{1/2}}}$ in standard SGD methods \cite{ghadimi2013stochastic}, but our $L$-average smoothness assumption is stronger than the $L$-smoothness of the expected value function $f$ in \cite{ghadimi2013stochastic}.

\item[$\bullet$] In Statement (c), although, we require the constant $c_1$ to satisfy $c_1 \geq \frac{1}{(m+1)^{2/3}}$, but it is independent of $m$. Since $m\geq 0$, we can have $c_1\geq 1$.

\item[$\bullet$]  If $\sigma = 0$, i.e., no stochasticity in our model \eqref{eq:ncvx_prob}, then from \eqref{eq:bounds_of_stepsizes}, we have $\Exp{\norms{\Grad_{\eta}(\overline{x}_m)}^2} \leq \frac{16\sqrt{13}L\tilde{b}^{1/4}}{(m+1)^{3/4}} \left[ F(x_0)  - F^{\star}\right]$.
Moreover, \eqref{eq:th1_proof1} still holds for any $\tilde{b} \geq \frac{1}{m+1}$, which is not necessary integer.
In this  case, we choose the lower bound $\tilde{b} := \frac{1}{m+1}$ to obtain the well-known bound in the deterministic case (up to a constant factor):
\myeqn{
\Exp{\norms{\Grad_{\eta}(\overline{x}_m)}^2} \leq \frac{16\sqrt{13}L}{(m+1)} \left[ F(x_0)  - F^{\star}\right].
}
Here, the expectation is taking over the remaining randomness (e.g., the random choice of $\bar{x}_m$).
This bound leads to the oracle complexity of $\BigO{\varepsilon^{-2}}$ as often seen in gradient-based methods for non-convex optimization.

\item[$\bullet$]  
If $\sigma > 0$, then one can minimize the right-hand side of  \eqref{eq:bounds_of_stepsizes} over $\tilde{b}$ to get 
\myeqn{
\tilde{b} := \frac{13^{2/3}\sigma^{8/3}(m+1)^{1/3}}{3^{4/3}L^{4/3}\Delta_F^{4/3}}, ~~\text{where}~~\Delta_F := F(x_0)  - F^{\star} > 0.
}
With this choice of $\tilde{b}$, the number of iterations $m$ and the total number $\Tc_m$ of stochastic gradient evaluations in Theorem~\ref{th:constant_stepsize_convergence} become
\myeqn{
m =  \BigO{\frac{  \sigma L \Delta_F}{\varepsilon^3}}~~~\text{and}~~~\Tc_m = \BigO{ \frac{\sigma^3}{L \Delta_F\varepsilon} + \frac{ \sigma L\Delta_F }{\varepsilon^3} }.
}
This bound shows the linear dependence on $\sigma$, $L$, $\Delta_F$ of $m$.
If $\Delta_F$ or $L$ is large compared to $\sigma$ or $\sigma$ is too small, we can rescale $\tilde{b}$ to trade-off $\Delta_F$, $L$, and $\sigma$ in \eqref{eq:bounds_of_stepsizes}, leading to different bounds of $m$ and $\Tc_m$ (up to a constant).
\item[$\bullet$] 
If $0 < \sigma \leq \BigO{\varepsilon^{-1}}$ and $\sigma$ dominates $L$ and $\Delta_F$, then the oracle complexity of Algorithm~\ref{alg:A1} is $\BigO{\sigma^3\varepsilon^{-1} + \sigma\varepsilon^{-3}}$, which is the same as the best-known stochastic oracle complexity $\BigO{\sigma^2\varepsilon^{-2}+\sigma\varepsilon^{-3}}$ of SPIDER \cite{fang2018spider}, SpiderBoost \cite{wang2018spiderboost}, or  ProxSARAH \cite{Pham2019}. 
\end{compactitem}
}

\begin{remark}\label{re:dependence_of_sigma}
\revise{We also make the following remarks:
\begin{compactitem}
\item[$\bullet$]
The weight $\beta$ and the step-sizes $\eta$ and $\gamma$ in Theorem~\ref{th:constant_stepsize_convergence} is not unique.
As shown in the proof of Theorem~\ref{th:constant_stepsize_convergence}, the configuration \eqref{eq:constant_step_sizes_A} is obtained by choosing $c_t := L$, $r_t := 1$, and $q_t := \frac{L\gamma_t}{2}$ in Lemma~\ref{le:upper_bound_new}. 
Under different choice of these parameters, we can obtain different configuration than \eqref{eq:constant_step_sizes_A}.

\item[$\bullet$]
Note that if we choose $\eta$ such that $0 < \eta < \frac{2}{(3+\gamma)L}$, then our results in Theorem~\ref{th:constant_stepsize_convergence} still hold, but the right-hand side of \eqref{eq:bounds_of_stepsizes} will be scaled up by a factor proportional to $\frac{1}{\eta^2}$.
\end{compactitem}
}
\end{remark}

\begin{proof}[\mytxtbi{Proof of Theorem \ref{th:constant_stepsize_convergence}}]
First, let us choose $c_t := L$, $r_t := 1$, and $q_t := \frac{L\gamma_t}{2}$ in Lemma~\ref{le:upper_bound_new}.
We also fix $\beta_t := \beta \in (0, 1)$, $\eta_t := \eta > 0$, and $\gamma_t := \gamma \in (0, 1)$.
From \eqref{eq:theta_kappa}, we have
\myeqn{
\theta_t = \theta = \left(\tfrac{1+L^2\eta^2}{L}\right)\gamma ~~~~\text{and}~~~\kappa_t = \kappa = \gamma\left(\tfrac{2}{\eta} - L\gamma - 2L\right).
}
Next, since $v_0$ is computed by Step~\ref{step:o2} with a mini-batch size $\tilde{b}$, by  \cite[Lemma 2]{Pham2019}, we have 
\myeq{eq:th1_proof1}{
\Exp{\norms{v_0 - \nabla{f}(x_0)}^2} \leq \frac{1}{\tilde{b}}\Exps{\xi}{\norms{\nabla{f}_{\xi}(x_0) - \nabla{f}(x_0)}^2} \leq \frac{\sigma^2}{\tilde{b}}.
}
Let us also fix $\alpha_t := \alpha > 0$ for $t\geq 0$ in Lemma~\ref{le:descent_pro}. 
Then, by utilizing \eqref{eq:th1_proof1} and $q_t := \frac{L\gamma_t}{2}$, we can derive from \eqref{eq:key_bound1001} that
\myeq{eq:th41_est2}{
{\!\!\!\!}\frac{1}{m+1}\sum_{t=0}^m\Exp{\norms{ \Grad_{\eta}(x_t)}^2} \leq \frac{4}{L\eta^2\gamma(m+1)}\left[F(x_0)  - F^{\star}\right] + \frac{2\alpha\sigma^2}{L\gamma\eta^2}\left[\frac{1}{\tilde{b}(m+1)} + (1-\beta)^2\right].{\!\!\!\!}
}
By minimizing $\frac{1}{\tilde{b}(m+1)} + (1-\beta)^2$ over $\beta \in [0, 1]$, we obtain $\beta := 1- \frac{1}{[\tilde{b}(m+1)]^{1/2}}$ as in \eqref{eq:constant_step_sizes_A}.
Moreover, the two conditions in \eqref{eq:key_cond1001} can be simplified as
\myeq{eq:key_cond1001b}{
(1 + L^2\eta^2)\gamma  \leq (1-\beta^2)\alpha L ~~~~~\text{and}~~~~~\frac{2}{\eta} - L\gamma - 2L \geq \alpha\gamma\beta^2 L^2.
}
\noindent{(a)}~Let us update $\eta := \frac{2}{L(3 + \gamma)}$ as \eqref{eq:constant_step_sizes_A}.
Since $\gamma \in [0,1]$, we have $\frac{1}{2L} \leq \eta \leq \frac{2}{3L}$.
Moreover, by the update of $\beta := 1- \frac{1}{[\tilde{b}(m+1)]^{1/2}}$, we also have $\beta \in (0, 1)$ since $m\geq 0$ and $\tilde{b}\geq 1$.

Now, since $\eta\leq \frac{2}{3L}$, we have $1 + L^2\eta^2 \leq \frac{13}{9}$.
In addition, it is obvious that $1-\beta^2 \geq 1-\beta = \frac{1}{[\tilde{b}(m+1)]^{1/2}}$.
Therefore, the first condition of \eqref{eq:key_cond1001b}  holds if we choose 
\myeqn{
0 < \gamma \leq \bar{\gamma} := \frac{9L \alpha }{13[\tilde{b}(m+1)]^{1/2}}.
}
Alternatively, since $\frac{2}{\eta} - L\gamma - 2L = L$ and $\beta \in [0, 1]$, the second condition of  \eqref{eq:key_cond1001b}  holds if we choose $0 < \gamma \leq \bar{\gamma} := \frac{1}{L\alpha}$.
Combining both conditions on $\gamma$, we obtain $\bar{\gamma} := \frac{1}{L\alpha} = \frac{9L \alpha }{13[\tilde{b}(m+1)]^{1/2}}$.
Hence, we obtain $\alpha := \frac{\sqrt{13}[\tilde{b}(m+1)]^{1/4}}{3L}$, which implies that $\bar{\gamma} = \frac{3}{\sqrt{13}[\tilde{b}(m+1)]^{1/4}}$.
Since $\tilde{b} \geq 1$ and $m\geq 0$, we have $ 0 < \bar{\gamma} < 1$.
Therefore, we can update 
\myeqn{
\gamma := \frac{3}{\sqrt{13}[\tilde{b}(m+1)]^{1/4}} \in (0, 1)
}
as shown in \eqref{eq:constant_step_sizes_A}.

\vspace{1ex}
\noindent{(b)}~Next, using the update of $\gamma$, the choice of $\alpha$, and the fact that $\frac{1}{2L} \leq \eta \leq \frac{2}{3L}$, we can further simplify \eqref{eq:th41_est2} as
\myeqn{
\frac{1}{m+1}\sum_{t=0}^m\Exp{\norms{ \Grad_{\eta}(x_t)}^2} \leq \frac{16\sqrt{13}L\tilde{b}^{1/4}}{3(m+1)^{3/4}}\left[F(x_0)  - F^{\star}\right] + \frac{208\sigma^2}{9[\tilde{b}(m+1)]^{1/2}}.{\!\!\!\!}
}
Since $\overline{x}_m \sim \Uni{\set{x_t}_{t=0}^m}$, we have $\Exp{\norms{\Grad_{\eta}(\overline{x}_m)}^2} = \frac{1}{m+1}\sum_{t=0}^m\Exp{\norms{ \Grad_{\eta}(x_t)}^2}$.
Combining this relation and the last inequality, we obtain \eqref{eq:bounds_of_stepsizes}.

\vspace{1ex}
\noindent{(c)}~
If we choose $\tilde{b} := c_1^2(m+1)^{1/3}$ for some $c_1 > 0$, then the bound \eqref{eq:bounds_of_stepsizes} reduces to \eqref{eq:bounds_of_stepsizes2}, where $\Delta_0 := \frac{16\sqrt{13c_1}L}{3}\left[ F(x_0)  - F^{\star}\right] + \frac{208\sigma^2}{9c_1}$. 
Moreover, since $\beta = 1 - \frac{1}{[\tilde{b}(m+1)]^{1/2}}$, to guarantee $\beta \in (0,1]$, we need to choose $c_1 \geq \frac{1}{(m+1)^{2/3}}$.

For any tolerance $\varepsilon > 0$, the number of iterations $m$ to achieve $\Exp{\norms{\Grad_{\eta}(\overline{x}_m)}^2} \leq \varepsilon^2$ can be estimated from \eqref{eq:bounds_of_stepsizes2} by letting:
\myeqn{
\frac{1}{(m+1)^{2/3}}\left[ \frac{16\sqrt{13c_1}L}{3}\left[ F(x_0)  - F^{\star}\right] + \frac{208\sigma^{2}}{9c_1}\right] = \frac{\Delta_0}{(m+1)^{2/3}} \leq \varepsilon^2.
}
This implies that $m+1 \geq \frac{\Delta_0^{3/2}}{\varepsilon^3}$.
Therefore, we can choose $m :=  \left\lfloor \frac{\Delta_0^{3/2}}{\varepsilon^3}\right\rfloor$.
This is also the number of proximal operations $\prox_{\eta\psi}$.
The total number $\Tc_m$ of stochastic gradient evaluations \nhan{$\nabla{f}_{\xi}(x_t)$}  is estimated as
\myeqn{
\Tc_m := \tilde{b} + 3(m+1) = c_1^2(m+1)^{1/3} +   \frac{3\Delta_0^{3/2}}{\varepsilon^3} = \frac{c_1^2\Delta_0^{1/2}}{\varepsilon} + \frac{3\Delta_0^{3/2}}{\varepsilon^3}.
}
Hence, we can choose $\Tc_m := \left\lfloor \frac{c_1^2\Delta_0^{1/2}}{\varepsilon} + \frac{3\Delta_0^{3/2}}{\varepsilon^3} \right\rfloor$ as its upper bound, which proves  (c).
\Eproof
\end{proof}

\beforesubsubsec
\subsubsection{\mytxtbi{The adaptive step-size case}}
\aftersubsubsec
\revise{Theorem~\ref{th:constant_stepsize_convergence} states the convergence and complexity estimate of Algorithm~\ref{alg:A1} with constant step-sizes.
However, when the number of iterations $m$ is large, this constant step-size $\gamma$ is small. 
We instead develop an adaptive rule to update the step-size $\gamma_t$ as follows:
\begin{compactitem}
\item[$\bullet$] Let us first fix $\beta := 1 - \frac{1}{[\tilde{b}(m+1)]^{1/2}} \in (0, 1)$ as in Theorem~\ref{th:constant_stepsize_convergence}.
\item[$\bullet$] Next, we also fix $\eta_t := \eta \in (0, \frac{1}{L})$ and define $\delta := \frac{2}{\eta} - 2L > 0$.
\item[$\bullet$] Then, we can update $\gamma_t$ adaptively as 
\myeq{eq:update_of_eta_t}{
\gamma_m := \frac{\delta}{L}~~~~\text{and}~~~~\gamma_t := \frac{\delta}{L + L(1+L^2\eta^2)\big[\beta^2\gamma_{t+1} + \beta^4\gamma_{t+2} + \cdots + \beta^{2(m-t)}\gamma_m\big]},
}
for $t=0,\cdots, m-1$.
\end{compactitem}
Applying Lemma~\ref{le:adaptive_step_size}, it is obvious to show that  $0 < \gamma_0 < \gamma_1 < \cdots < \gamma_m$.
Interestingly, our step-size is updated in an increasing manner instead of diminishing as in existing SGD-type methods.
Here, $\gamma_t$ becomes larger as $t$ increases.
Moreover, given $m$, we can pre-compute the sequence of these step-sizes $\set{\gamma_t}_{t=0}^m$ in advance within $\BigO{m}$ basic operations.
Therefore, it does not significantly incur the computational cost of our method.
}

The following theorem states the convergence of Algorithm~\ref{alg:A1} under the adaptive update rule \eqref{eq:update_of_eta_t}, whose proof can be found in Appendix~\ref{apdx:th:singe_loop_adapt_step}.

\revise{
\begin{theorem}\label{th:singe_loop_adapt_step}
Assume that Assumptions~\ref{as:A0}, \ref{as:A1}, and \ref{as:A1b} hold.
Let $\sets{x_t}_{t=0}^m$ be the sequence generated by Algorithm~\ref{alg:A1} to solve  \eqref{eq:ncvx_prob} using the parameters $\beta$, $\eta$, and step-size $\gamma_t$ defined by \eqref{eq:update_of_eta_t}.
Then, the following statements hold:
\begin{compactitem}
\vspace{0.5ex}
\item[$(\mathrm{a})$] 
If $\Sigma_m := \sum_{t=0}^m\gamma_t$ and $\overline{x}_m\sim \Unip{\pb}{\sets{x_t}_{t=0}^m}$ with $\pb_t := \Prob{\overline{x}_m = x_t} = \frac{\gamma_t}{\Sigma_m}$, then we have
\myeq{eq:adaptive_key_est}{
{\!\!\!\!}
\Exp{\norms{\Grad_{\eta}(\overline{x}_m)}^2} \leq \frac{8\sqrt{2}\big(L + \sqrt{\delta L}[\tilde{b}(m+1)]^{1/4}\big)}{L\eta^2\delta(m+1)}\left[F(x_0) -  F^{\star}\right]  +  \frac{8\sigma^2}{L^2\eta^2[\tilde{b}(m+1)]^{1/2}}.
{\!\!\!\!}
}

\vspace{0.65ex}
\item[$(\mathrm{b})$] 
Let us choose the initial mini-batch size $\tilde{b} :=  c_1^2 (m + 1)^{1/3}$ for $c_1 \geq \frac{1}{(m+1)^{2/3}}$.
Then, for any $\varepsilon > 0$, the number of iterations $m$ to guarantee $\Exp{\norms{\Grad_{\eta}(\overline{x}_m)}^2} \leq \varepsilon^2$ does not exceed
\myeqn{
m := \left\lfloor  \frac{\Delta_0^{3/2}}{\varepsilon^3} \right\rfloor, ~\text{where}~\Delta_0 := \frac{8}{L^2\eta^2}\left[\frac{\sqrt{2}L( L + \sqrt{c_1L\delta})}{\delta}\big[F(x_0) - F^{\star}\big] + \frac{\sigma^2}{c_1}\right].
}
This is also the total number of proximal operations $\prox_{\eta\psi}$.
Consequently, the number $\Tc_m$ of stochastic gradient evaluations is at most $\Tc_m := \left\lfloor \frac{c_1^2\Delta_0^{1/2}}{\varepsilon} + \frac{3\Delta_0^{3/2}}{\varepsilon^3}\right\rfloor$. 
\end{compactitem}
\end{theorem}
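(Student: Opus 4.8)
The plan is to specialize the telescoped descent estimate of Lemma~\ref{le:descent_pro} to the adaptive schedule \eqref{eq:update_of_eta_t}, in direct parallel to the way the constant-step-size Theorem~\ref{th:constant_stepsize_convergence} specializes the same lemma to \eqref{eq:constant_step_sizes_A}. First I would fix the free parameters of Lemma~\ref{le:upper_bound_new2} to $c_t := L$, $r_t := 1$, and $q_t := \frac{L\gamma_t}{2}$, so that \eqref{eq:theta_kappa} gives $\theta_t = \frac{(1+L^2\eta^2)\gamma_t}{L}$ and $\kappa_t = \gamma_t(\delta - L\gamma_t)$ with $\delta := \frac{2}{\eta} - 2L > 0$. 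Then I would introduce the Lyapunov weights $\alpha_t$ through the backward recursion $\alpha_t := \beta^2\alpha_{t+1} + \theta_t$ with terminal value $\alpha_{m+1} := 0$, which unrolls to the closed form $\alpha_t = \frac{1+L^2\eta^2}{L}\sum_{j=t}^m\beta^{2(j-t)}\gamma_j$.

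The crux of the argument is that this choice of $\alpha_t$ makes both requirements of \eqref{eq:key_cond1001} hold. The first, $\alpha_t - \beta^2\alpha_{t+1} - \theta_t \geq 0$, holds with equality by the very construction of $\alpha_t$. For the second, $\kappa_t - \alpha_{t+1}\beta^2\gamma_t^2 L^2 \geq 0$, I would set $P_t := \sum_{j=t+1}^m\beta^{2(j-t)}\gamma_j$ so that \eqref{eq:update_of_eta_t} reads $L\gamma_t[1 + (1+L^2\eta^2)P_t] = \delta$, hence $\delta - L\gamma_t = L(1+L^2\eta^2)\gamma_t P_t$; combining $\kappa_t = \gamma_t(\delta - L\gamma_t) = L(1+L^2\eta^2)\gamma_t^2 P_t$ with the identity $\beta^2\alpha_{t+1} = \frac{1+L^2\eta^2}{L}P_t$ shows that the second condition also collapses to an equality. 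In short, \eqref{eq:update_of_eta_t} is engineered precisely so that \eqref{eq:key_cond1001} is tight for every $t$, and the monotonicity $0 < \gamma_0 < \cdots < \gamma_m = \frac{\delta}{L}$ then follows from Lemma~\ref{le:adaptive_step_size}.

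With \eqref{eq:key_cond1001} verified, I would invoke the accumulated bound \eqref{eq:key_bound1001}. Substituting $q_t = \frac{L\gamma_t}{2}$ on the left and, on the right, using $\Exp{\norms{v_0 - \nabla{f}(x_0)}^2} \leq \frac{\sigma^2}{\tilde{b}}$ from \eqref{eq:th1_proof1} together with $\sigma_{t+1}^2 \leq \sigma^2$ and $(1-\beta)^2 = \frac{1}{\tilde{b}(m+1)}$, yields $\frac{L\eta^2}{4}\sum_{t=0}^m\gamma_t\Exp{\norms{\Grad_{\eta}(x_t)}^2} \leq F(x_0) - F^{\star} + \frac{\alpha_0\sigma^2}{2\tilde{b}} + \frac{\sigma^2}{2\tilde{b}(m+1)}\sum_{t=0}^m\alpha_{t+1}$. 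Dividing by $\Sigma_m := \sum_{t=0}^m\gamma_t$ and observing that the sampling probabilities $\pb_t = \gamma_t/\Sigma_m$ turn the left-hand side into $\frac{L\eta^2\Sigma_m}{4}\Exp{\norms{\Grad_{\eta}(\overline{x}_m)}^2}$ produces the template estimate $\Exp{\norms{\Grad_{\eta}(\overline{x}_m)}^2} \leq \frac{4}{L\eta^2\Sigma_m}\bigl[F(x_0) - F^{\star} + \frac{\alpha_0\sigma^2}{2\tilde{b}} + \frac{\sigma^2}{2\tilde{b}(m+1)}\sum_{t=0}^m\alpha_{t+1}\bigr]$.

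The main obstacle, and the final step, is converting this template into the explicit estimate \eqref{eq:adaptive_key_est} by controlling the three schedule-dependent quantities $\Sigma_m$, $\alpha_0$, and $\frac{1}{m+1}\sum_t\alpha_{t+1}$. Because the $\gamma_t$ are defined only implicitly through the backward recursion \eqref{eq:update_of_eta_t}, these estimates are exactly where Lemma~\ref{le:adaptive_step_size} is needed: a lower bound $\Sigma_m \geq (m+1)\gamma_0 \gtrsim \frac{\delta(m+1)}{L + \sqrt{\delta L}[\tilde{b}(m+1)]^{1/4}}$ for the denominator, together with the upper bounds $\alpha_t \leq \frac{(1+L^2\eta^2)\delta}{L^2(1-\beta^2)} \leq \frac{2\delta}{L^2}[\tilde{b}(m+1)]^{1/2}$ coming from $\gamma_j \leq \frac{\delta}{L}$, the geometric weights $\beta^{2(j-t)}$, and $\frac{1}{1-\beta^2} \leq [\tilde{b}(m+1)]^{1/2}$. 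Balancing the resulting $[\tilde{b}(m+1)]^{1/4}$ and $[\tilde{b}(m+1)]^{1/2}$ factors yields the two terms of \eqref{eq:adaptive_key_est}. Part (b) is then routine and mirrors Theorem~\ref{th:constant_stepsize_convergence}: inserting $\tilde{b} := c_1^2(m+1)^{1/3}$ collapses both terms of \eqref{eq:adaptive_key_est} into $\frac{\Delta_0}{(m+1)^{2/3}}$, so requiring this to be at most $\varepsilon^2$ gives $m := \lfloor \Delta_0^{3/2}\varepsilon^{-3}\rfloor$, and adding the $\tilde{b}$ initial gradients to the $3(m+1)$ per-iteration evaluations yields $\Tc_m := \lfloor \frac{c_1^2\Delta_0^{1/2}}{\varepsilon} + \frac{3\Delta_0^{3/2}}{\varepsilon^3}\rfloor$.
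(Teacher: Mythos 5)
Your structural idea is correct and is genuinely different from the paper's route. With $c_t := L$, $r_t := 1$, $q_t := \frac{L\gamma_t}{2}$, and the backward choice $\alpha_{m+1} := 0$, $\alpha_t := \beta^2\alpha_{t+1} + \theta_t$, the update \eqref{eq:update_of_eta_t} does make both inequalities in \eqref{eq:key_cond1001} hold with equality: your identity $\delta - L\gamma_t = L(1+L^2\eta^2)\gamma_t P_t$ with $P_t := \sum_{j=t+1}^m\beta^{2(j-t)}\gamma_j$ is exactly right, so Lemma~\ref{le:descent_pro} applies and \eqref{eq:key_bound1001} gives your template estimate. The paper, by contrast, explicitly avoids the Lyapunov function for this theorem: it sums the one-iteration bound of Lemma~\ref{le:upper_bound_new2}, substitutes the unrolled variance bound \eqref{eq:vt_variance_bound_new} of Lemma~\ref{le:upper_bound_new}, and derives \eqref{eq:update_of_eta_t} as the condition making the accumulated coefficient of the terms $\Exp{\norms{\widehat{x}_{i+1}-x_i}^2}$ nonpositive. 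Your reading of \eqref{eq:update_of_eta_t} as the schedule that makes \eqref{eq:key_cond1001} tight is a nice reinterpretation of the same design principle.

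However, your final quantitative step has a genuine gap. Bounding every $\alpha_t$ by the uniform constant $\frac{(1+L^2\eta^2)\delta}{L^2(1-\beta^2)} \leq \frac{2\delta}{L^2}[\tilde{b}(m+1)]^{1/2}$ and then dividing by $\Sigma_m \geq \frac{\delta(m+1)}{2\sqrt{2}\left(L + \sqrt{L\delta}[\tilde{b}(m+1)]^{1/4}\right)}$ does \emph{not} produce \eqref{eq:adaptive_key_est}: the variance contribution becomes, up to constants,
\myeqn{
\frac{\sigma^2}{L^2\eta^2[\tilde{b}(m+1)]^{1/2}} {~}+{~} \frac{\sigma^2\sqrt{\delta}}{L^{5/2}\eta^2[\tilde{b}(m+1)]^{1/4}},
}
and the second, extra term decays only like $[\tilde{b}(m+1)]^{-1/4}$. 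With $\tilde{b} = c_1^2(m+1)^{1/3}$ it is $\Theta\big((m+1)^{-1/3}\big)$ rather than $\BigO{(m+1)^{-2/3}}$, which would degrade part (b) from $m = \BigO{\varepsilon^{-3}}$ to $m = \BigO{\varepsilon^{-6}}$. The missing idea is that the variance accumulation must be bounded \emph{proportionally to $\Sigma_m$}, so that $\Sigma_m$ cancels upon division rather than being replaced by its lower bound. This is available inside your framework: exchanging the order of summation gives $\sum_{t=0}^m\alpha_{t+1} = \frac{(1+L^2\eta^2)}{L}\sum_{j=1}^m\gamma_j\frac{1-\beta^{2j}}{1-\beta^2} \leq \frac{(1+L^2\eta^2)\Sigma_m}{L(1-\beta^2)}$, and Chebyshev's sum inequality with the monotonicity $\gamma_0 < \cdots < \gamma_m$ gives $\alpha_0 = \frac{(1+L^2\eta^2)}{L}\sum_{j=0}^m\beta^{2j}\gamma_j \leq \frac{(1+L^2\eta^2)\Sigma_m}{L(m+1)(1-\beta^2)}$. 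Plugging these into your template and using $1+L^2\eta^2\leq 2$, $(1-\beta^2)^{-1}\leq[\tilde{b}(m+1)]^{1/2}$ yields exactly the second term $\frac{8\sigma^2}{L^2\eta^2[\tilde{b}(m+1)]^{1/2}}$ of \eqref{eq:adaptive_key_est}; this Chebyshev step is precisely the role the inequality plays in the paper's own argument. A smaller inaccuracy: Lemma~\ref{le:adaptive_step_size} lower-bounds the sum $\Sigma_m$ directly, not $\gamma_0$; the chain $\Sigma_m \geq (m+1)\gamma_0 \gtrsim \frac{\delta(m+1)}{L+\sqrt{\delta L}[\tilde{b}(m+1)]^{1/4}}$ is not what the lemma states, and the crude bound $\gamma_j \leq \frac{\delta}{L}$ only yields $\gamma_0 \geq \frac{\delta}{L + 2\delta[\tilde{b}(m+1)]^{1/2}}$, which is again too weak, so you should invoke the lemma's conclusion \eqref{eq:stepsize_pros} for $\Sigma_m$ as stated.
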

}

While the proof of Theorem~\ref{th:constant_stepsize_convergence} relies on the Lyapunov function $V$ defined by \eqref{eq:Lyapunov_func} that has an asymptotically  monotone property, the proof of Theorem~\ref{th:singe_loop_adapt_step} is completely different by adopting the techniques in \cite{Pham2019} and does not use any Lyapunov function.
\revise{The oracle complexity of Theorem~\ref{th:singe_loop_adapt_step} remains the same as in Theorem~\ref{th:constant_stepsize_convergence}.
When $\sigma > 0$, we can also adjust $\tilde{b}$ in \eqref{eq:adaptive_key_est} to obtain the final bounds for $m$ and $\Tc_m$ that depend on the variance $\sigma$.}

\begin{remark}[\textbf{Without initial batch}]\label{re:compare_sgd}
\revise{If we choose the initial batch size $\tilde{b} := 1$ (i.e., single sample) to compute $v_0$ at Step~\ref{step:o2} of Algorithm~\ref{alg:A1}, then  \eqref{eq:adaptive_key_est} becomes
\myeqn{ 
\Exp{\norms{\Grad_{\eta}(\overline{x}_m)}^2} \leq \frac{8\sqrt{2}\left[F(x_0) -  F^{\star}\right] }{\eta^2\delta(m+1)} +  \frac{8\sqrt{2L\delta}\left[F(x_0) -  F^{\star}\right]}{L\eta^2\delta(m+1)^{3/4}}  +  \frac{8\sigma^2}{L^2\eta^2(m+1)^{1/2}}.
}
Hence, the oracle complexity of Algorithm~\ref{alg:A1} reduces to $\BigO{\max\set{\frac{(L\Delta_F)^{4/3}}{\varepsilon^{8/3}}, \frac{\sigma^2}{\varepsilon^4}}}$, where $\Delta_F := F(x_0) -  F^{\star}$.
This complexity is similar to proximal SGD methods, see, e.g., \cite{ghadimi2013stochastic} if the second term dominates the first one.
Therefore, the choice of the initial mini-batch $\widetilde{\Bc}_t$ for $v_0$ is crucial in Algorithm~\ref{alg:A1} to achieve better complexity bounds than SGD.}
\end{remark}

\begin{remark}[\textbf{The effect of $m$ on $\gamma_t$}]\label{re:adaptive_stepsize}
Due to the update \eqref{eq:update_of_eta_t}, we have $\gamma_m > \gamma_{m-1} > \cdots > \gamma_0 > 0$.
Clearly, if $m$ is large, then $\set{\gamma_t}$ is getting smaller and smaller as $t$ is decreasing, which leads to a slow convergence.
This suggests that we should restart Algorithm~\ref{alg:A1} after a relatively small number of iterations $m$ to avoid small step-sizes $\set{\gamma_t}$.
This algorithmic variant becomes more efficient if we combine it with a \nhan{double-loop} as described in Algorithm~\ref{alg:A2}.
\end{remark}

\beforesubsec
\subsection{\bf Restarting proximal hybrid stochastic gradient descent algorithm}\label{sec:sgd_algs2}
\aftersubsec
\revise{
\noindent\mytxtbi{Motivation:} We observe from Theorems~\ref{th:constant_stepsize_convergence} and~\ref{th:singe_loop_adapt_step} that:
\begin{compactitem}
\item[$\bullet$] 
If $m$ is large, then from \eqref{eq:constant_step_sizes_A}, we can see that the step-size $\gamma$ is small and $\beta$ is very close to $1$.
While a small step-size $\gamma$ leads to slow progress in Algorithm~\ref{alg:A1}, $\beta\approx 1$ shows that  the unbiased term does not significantly compensate the biaseness of the estimator $v_t$.

\item[$\bullet$]
Similarly, as can be seen from Remark~\ref{re:adaptive_stepsize} that if $m$ is large, then the step-size $\gamma_t$ in Theorem~\ref{th:singe_loop_adapt_step} is also small as $t$ decreases, which also makes Algorithm~\ref{alg:A1} slow.
\end{compactitem}
To circumvent this issue, we can restart Algorithm~\ref{alg:A1} after running it for a certain number of iterations $m$ by adding an outer-loop.
In this case, we obtain a double-loop variant as in  SVRG or SARAH variants.
However, unlike SVRG and SARAH-based methods where their double-loop is mandatory to guarantee convergence, we use it as a restarting loop.
Without the outer loop, Algorithm~\ref{alg:A1} still has convergence guarantee as shown in Theorems~\ref{th:constant_stepsize_convergence} and \ref{th:singe_loop_adapt_step}.
According to a very recent work \cite{carmon2017lower}, our algorithm achieves the optimal oracle complexity (up to a constant) under Assumptions~\ref{as:A0}, \ref{as:A1}, and \ref{as:A1b}. 
}

The complete restarting variant of Algorithm~\ref{alg:A1} is described in Algorithm~\ref{alg:A2}.

\begin{algorithm}[ht!]\caption{(Restarting Proximal Hybrid SGD algorithm (ProxHSGD-RS))}\label{alg:A2}
\normalsize
\begin{algorithmic}[1]
   \State{\bfseries Initialization:} An initial point $\overline{x}^{(0)}$  and parameters $\tilde{b}$, $m$, $\beta_t$, and $\eta_t$ (will be specified).{\!\!\!}
   \vspace{0.5ex}
   \State{\bfseries Restarting stage:}~{\bfseries For $s := 1, 2, \cdots, S$ do}
   \vspace{0.5ex}   
   \State\hspace{3ex}\label{step:a2o2} Run Algorithm~\ref{alg:A1} with an initial point $x_0^{(s)} := \overline{x}^{(s-1)}$.
   \vspace{0.5ex}   
   \State\hspace{3ex}\label{step:a2o5} Set $\overline{x}^{(s)} := x_{m+1}^{(s)}$ as the last iterate of Algorithm~\ref{alg:A1}. 
   \vspace{0.5ex}   
   \State{\bfseries EndFor}
\end{algorithmic}
\end{algorithm}

To analyze Algorithm~\ref{alg:A2}, we use $x^{(s)}_t$ to represent the iterate of Algorithm~\ref{alg:A1} at  the $t$-th inner iteration within each stage $s$.
As we can see,  Algorithm~\ref{alg:A2} calls Algorithm~\ref{alg:A1} as a subroutine \nhan{for every} iteration, called the $s$-th stage and \nhan{retrieves} the output $\overline{x}^{(s)} := x_{m+1}^{(s)}$ as the last iterate of Algorithm~\ref{alg:A1} instead of taking it randomly from $\sets{x_t^{(s)}}_{t=0}^m$.
Here, we assume that we fix the step-size \revise{$\eta_t = \eta \in (0, \frac{1}{L})$}, fix the mini-batch $\tilde{b}_s = \tilde{b} \in\Nbb_{+}$, and choose $\beta := 1 - \frac{1}{[\tilde{b}(m+1)]^{1/2}} \in (0, 1)$ for simplicity of our analysis.

Now, we can derive the convergence of Algorithm~\ref{alg:A2} in the following theorem whose proof is deferred to Appendix~\ref{apdx:th:double_loop_convergence}.

\revise{
\begin{theorem}\label{th:double_loop_convergence}
Assume that we choose $\tilde{b}_s := \tilde{b} \in\Nbb_{+}$, $\beta := 1 - \frac{1}{[\tilde{b}(m+1)]^{1/2}} \in (0, 1)$, and $\eta \in (0, \frac{1}{L})$, and update the  step-size $\gamma_t$ for Algorithm~\ref{alg:A2} as in \eqref{eq:update_of_eta_t}.
Let $\sets{x^{(s)}_t}_{t=0\to m}^{s=1\to S}$ be generated by Algorithm~\ref{alg:A2} to solve \eqref{eq:ncvx_prob} and $\overline{x}_T \sim \Unip{\pb}{\sets{x^{(s)}_t}_{t=0\to m}^{s=1\to S}}$ with $\Prob{\overline{x}_T = x_t^{(s)}} = \frac{\gamma_t}{S\Sigma_m}$ be the output of Algorithm~\ref{alg:A2}.
Then, the following statement holds:
\begin{compactitem}
\item[$\mathrm{(a)}$] 
The following estimate holds:
\myeq{eq:double_loop_est}{
\Exp{\norms{\Grad_{\eta}(\overline{x}_T)}^2} \leq  \frac{8\sqrt{2}\tilde{b}^{1/4}\big(L + \sqrt{L\delta}\big)}{L\delta\eta^2S (m+1)^{3/4}} \big[F(\overline{x}^{(0)}) - F^{\star}\big] + \frac{8\sigma^2}{L^2\eta^2[\tilde{b}(m+1)]^{1/2}}.
}
\item[$\mathrm{(b)}$] 
For some constant $c_1 \geq \frac{1}{(m+1)}$ and  for any tolerance $\varepsilon > 0$, let us choose
\myeqn{
\tilde{b} :=  \frac{16c_1}{L^2\eta^2}\cdot\frac{\max\set{1,\sigma^2}}{\varepsilon^2}~~~\text{and}~~~m + 1 :=  \frac{16}{c_1L^2\eta^2}\cdot\frac{\max\set{1,\sigma^2}}{\varepsilon^2}.
}
Then,  to guarantee $\Exp{\norms{\Grad_{\eta}(\overline{x}_T)}^2} \leq \varepsilon^2$, we need at most $S$ outer iterations as
\myeq{eq:S_iterations}{
S := \left\lfloor \frac{4\sqrt{2}c_1\big(L + \sqrt{L\delta}\big)}{\delta\eta\varepsilon}\big[F(\overline{x}^{(0)}) - F^{\star}\big] \right\rfloor.
}
Consequently,  the total number  $\Tc_{\nabla{f}}$ of stochastic gradient evaluations and the total number $\Tc_{\prox}$ of proximal operations $\prox_{\eta\psi}$, respectively do not exceed 
\myeq{eq:Toc_double_loop}{
\hspace{-1ex}\begin{array}{llclcl}
&\Tc_{\nabla{f}} &:= &   \dfrac{64\sqrt{2}(c_1^2 + 3)(L + \sqrt{L\delta})\max\set{1,\sigma}}{L^2\eta^3\delta \varepsilon^3}\big[F(\overline{x}^{(0)}) - F^{\star}\big]
& = &  \BigO{ \dfrac{\max\set{\sigma, 1}}{ \varepsilon^{3}}}, \vspace{1ex}\\
&\Tc_{\prox} &:=& \dfrac{64\sqrt{2}(L + \sqrt{L\delta})\max\set{1,\sigma}}{L^2\eta^3\delta \varepsilon^3} \big[F(\overline{x}^{(0)}) - F^{\star}\big]
& = &  \BigO{ \dfrac{\max\set{\sigma, 1}}{\varepsilon^{3}}}.
\end{array}
\hspace{-1ex}
}
\end{compactitem}
\end{theorem}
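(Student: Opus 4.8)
The plan is to piggyback on the single-loop adaptive-step-size analysis behind Theorem~\ref{th:singe_loop_adapt_step} and then telescope the objective decrease over the $S$ stages. The one change I make to that argument is to stop short of relaxing the terminal objective value to the lower bound $F^{\star}$. Inspecting the single-loop proof (the descent estimate Lemma~\ref{le:upper_bound_new2}, the variance recursion Lemma~\ref{le:upper_bound_new}, and the backward step-size rule \eqref{eq:update_of_eta_t}), one call of Algorithm~\ref{alg:A1} started at $x_0^{(s)}$ produces, \emph{before} the endpoint is discarded, a per-stage inequality of the form
\begin{equation*}
c\,\eta^2\sum_{t=0}^m\gamma_t\Exp{\norms{\Grad_{\eta}(x_t^{(s)})}^2} \leq \Exp{F(x_0^{(s)})} - \Exp{F(x_{m+1}^{(s)})} + \Gamma_{\sigma},
\end{equation*}
with an absolute constant $c>0$ (the same one implicit in \eqref{eq:adaptive_key_est}) and a variance contribution $\Gamma_{\sigma}$ that, after division by $\Sigma_m$, reproduces the term $\frac{8\sigma^2}{L^2\eta^2[\tilde b(m+1)]^{1/2}}$ of \eqref{eq:adaptive_key_est}. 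The expectations here are conditioned on the history up to the start of stage $s$, i.e.\ on $\overline{x}^{(s-1)}$.

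Next I would sum this inequality over $s=1,\dots,S$ and take total expectations. Since Algorithm~\ref{alg:A2} sets $x_0^{(s)} = \overline{x}^{(s-1)} = x_{m+1}^{(s-1)}$ (with $x_0^{(1)} = \overline{x}^{(0)}$), the tower property gives $\Exp{F(x_0^{(s)})} = \Exp{F(x_{m+1}^{(s-1)})}$, so the objective terms telescope to $F(\overline{x}^{(0)}) - \Exp{F(x_{m+1}^{(S)})} \leq F(\overline{x}^{(0)}) - F^{\star}$, while the variance terms merely add to $S\Gamma_{\sigma}$. This is exactly where restarting pays off: a single global gap $F(\overline{x}^{(0)}) - F^{\star}$ absorbs all $S$ stages. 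Dividing by $S\Sigma_m$ and recalling that $\overline{x}_T$ is drawn with probability $\frac{\gamma_t}{S\Sigma_m}$ over all $S(m+1)$ iterates gives
\begin{equation*}
\Exp{\norms{\Grad_{\eta}(\overline{x}_T)}^2} = \frac{1}{S\Sigma_m}\sum_{s=1}^S\sum_{t=0}^m\gamma_t\Exp{\norms{\Grad_{\eta}(x_t^{(s)})}^2} \leq \frac{F(\overline{x}^{(0)}) - F^{\star}}{c\,\eta^2 S\Sigma_m} + \frac{\Gamma_{\sigma}}{c\,\eta^2\Sigma_m}.
\end{equation*}
The factor $S$ cancels in the variance term (the $S$ copies of $\Gamma_{\sigma}$ are divided by $S\Sigma_m$), which explains why the second term of \eqref{eq:double_loop_est} carries no $1/S$ while the first does. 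Substituting the lower bound $\Sigma_m \gtrsim \frac{L\delta}{L+\sqrt{L\delta}}\cdot\frac{(m+1)^{3/4}}{\tilde b^{1/4}}$, obtained from the monotonicity $0<\gamma_0<\cdots<\gamma_m=\delta/L$ together with \eqref{eq:update_of_eta_t}, then yields \eqref{eq:double_loop_est} and settles part (a).

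For part (b) the remaining work is a substitution exercise. With $K := \frac{16\max\set{1,\sigma^2}}{L^2\eta^2\varepsilon^2}$, the prescribed choices read $\tilde b = c_1 K$ and $m+1 = K/c_1$, so $[\tilde b(m+1)]^{1/2} = K$ makes the variance term equal $\frac{\sigma^2\varepsilon^2}{2\max\set{1,\sigma^2}} \leq \frac{\varepsilon^2}{2}$, while $\frac{\tilde b^{1/4}}{(m+1)^{3/4}} = \frac{c_1}{\sqrt{K}}$ reduces the first term of \eqref{eq:double_loop_est} to a multiple of $1/S$; requiring it to be at most $\frac{\varepsilon^2}{2}$ gives the number of outer iterations $S$ in \eqref{eq:S_iterations}. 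The oracle counts then follow from $\Tc_{\nabla{f}} = S(\tilde b + 3(m+1))$ and $\Tc_{\prox} = S(m+1)$, producing \eqref{eq:Toc_double_loop} and the $\BigO{\max\set{\sigma,1}\varepsilon^{-3}}$ bounds once the $c_1^2+3$ factor is collected.

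The main obstacle I anticipate is concentrated in part (a) and is twofold. First, re-deriving the per-stage inequality in its \emph{undischarged} form (keeping $-\Exp{F(x_{m+1}^{(s)})}$ rather than $-F^{\star}$) requires care because the adaptive-step-size argument is \emph{not} Lyapunov-based; one must verify that the ProxSARAH-style telescoping still closes with the correct $\gamma_t$-weighting on the gradient-mapping terms and that the stagewise conditional expectations chain cleanly under the tower property. Second, producing the lower bound on $\Sigma_m=\sum_{t=0}^m\gamma_t$ with the right exponent $(m+1)^{3/4}/\tilde b^{1/4}$ is delicate: a naive bound through $\gamma_0$ alone yields only $(m+1)^{1/2}/\tilde b^{1/2}$ and must be sharpened by exploiting how the geometric weights $\beta^{2j}$ truncate the tail sums in \eqref{eq:update_of_eta_t}. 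Everything downstream of the telescoping is routine algebra.
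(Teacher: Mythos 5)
Your proposal follows essentially the same route as the paper's own proof: it reuses the undischarged per-stage estimate behind Theorem~\ref{th:singe_loop_adapt_step} (the paper's \eqref{eq:single_loop2}, which keeps $\Exp{F(x_0^{(s)}) - F(x_{m+1}^{(s)})}$), telescopes across the $S$ stages via $x_0^{(s)} = x_{m+1}^{(s-1)}$, discharges only the final endpoint to $F^{\star}$, and invokes the Chebyshev-based lower bound on $\Sigma_m$ from Lemma~\ref{le:adaptive_step_size} before the part-(b) substitutions, exactly as the paper does. The only blemish is a harmless constant slip in your quoted bound on $\Sigma_m$, which should read $\Sigma_m \gtrsim \frac{\delta}{L+\sqrt{L\delta}}\cdot\frac{(m+1)^{3/4}}{\tilde{b}^{1/4}}$ (no extra factor of $L$ in the numerator); this does not affect the argument or the final estimates.
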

}

\revise{If $\sigma = 0$, i.e., no stochasticity involved in \eqref{eq:ncvx_prob} and $\tilde{b} := c_1^2(m+1)$, then the bound \eqref{eq:double_loop_est} reduces to $\Exp{\norms{\Grad_{\eta}(\overline{x}_T)}^2} \leq \frac{8\sqrt{2c_1}\big(L + \sqrt{L\delta}\big)}{L\delta\eta^2S (m+1)^{1/2}} \big[F(\overline{x}^{(0)}) - F^{\star}\big]$.
However, since $c_1 \geq \frac{1}{m+1}$, we can choose its lower bound as $c_1 := \frac{1}{m+1}$.
In this case, the last inequality becomes
\myeqn{
\Exp{\norms{\Grad_{\eta}(\overline{x}_T)}^2} \leq \frac{8\sqrt{2}\big(L + \sqrt{L\delta}\big)}{L\delta\eta^2S (m+1)} \big[F(\overline{x}^{(0)}) - F^{\star}\big] =  \BigO{\frac{L[F(\overline{x}^{(0)}) - F^{\star}]}{S(m+1)}}.
}
This bound is the same as in gradient-based methods.
If $\sigma > 0$, then the total number of stochastic gradient evaluations is at most $\BigO{\frac{\sigma^2}{\varepsilon^2} + \frac{\sigma}{\varepsilon^3}}$, which is optimal  (up to a constant factor) according to \cite{carmon2017lower} under Assumptions~\ref{as:A0}, \ref{as:A1}, and \ref{as:A1b}.
Practically, if $\beta$ is very close to $1$, one can remove the unbiased SGD term to save one stochastic gradient evaluation. 
In this case, our estimator reduces to SARAH but using different step-size.
Our empirical results show that when $\beta\approx 0.999$, the performance of our methods is not affected. 
}

\begin{remark}\label{re:choice_of_m}
We have not tried to optimize all the constant factors in the bounds of Theorems~\ref{th:constant_stepsize_convergence}, \ref{th:singe_loop_adapt_step}, and \ref{th:double_loop_convergence}.
Therefore, our bounds can be different from existing results up to a constant factor as we can see in the case $\sigma = 0$ (i.e., no stochasticity in \eqref{eq:ncvx_prob}).
\end{remark}

\beforesubsec
\subsection{\textbf{Linear convergence under gradient dominant condition}}\label{subsec:grad_dominance}
\aftersubsec
If the composite function $F$ satisfies the following $\tau$-gradient dominant condition \cite{wang2018spiderboost}:
\myeq{eq:grad_dominance}{
F(x) - F^{\star} \leq \frac{\tau}{2}\norms{\Grad_{\eta}(x)}^2,
}
for any $x\in\dom{F}$ and $\eta > 0$, where $\tau > 0$ (see, e.g., \cite{wang2018spiderboost}), then we can modify Algorithm~\ref{alg:A2} by setting $\overline{x}^{(s)} := \overline{x}_m^{(s)}$, where $ \overline{x}_m^{(s)}\sim\Unip{\pb}{\sets{x_t^{(s)}}_{t=0}^m}$, to obtain an $\varepsilon$-linear convergence rate.
Note that if $\psi = 0$, then the gradient dominant condition above reduces to the standard one $f(x) - f(x^{\star}) \leq \frac{\tau}{2}\norms{\nabla{f}(x)}^2$ for any $x\in\dom{f}$, which is widely used in the literature. 

\revise{
\begin{corollary}\label{re:grad_dominant}
Suppose that the assumptions of Theorem~\ref{th:double_loop_convergence} and the gradient dominant condition \eqref{eq:grad_dominance} holds.
Let $\sets{x_t^{(s)}}_{t=0}^m$ be generated by Algorithm~\ref{alg:A2} to solve \eqref{eq:ncvx_prob}, where $\overline{x}^{(s)} := \overline{x}_m^{(s)}$ with $\overline{x}_m^{(s)}\sim\Unip{\pb}{\sets{x_t^{(s)}}_{t=0}^m}$, and $m$ and $\tilde{b}$ are chosen as
\myeqn{
m + 1 := \frac{32(L + \sqrt{L\delta})\tau^{3/2}\sigma}{L^2\eta^3\delta \sqrt{\varepsilon}}~~~~\text{and}~~~~\tilde{b} := \frac{2\delta\tau^{1/2}\sigma^3}{L^2(L+\sqrt{L\delta})\eta\varepsilon^{3/2}},
}
for a given tolerance $\varepsilon > 0$.
Then, the following inequalities hold:
\myeq{eq:eps_linear_rate}{
\Exp{F(\overline{x}^{(s)}) - F^{\star}} \leq \frac{1}{2}\Exp{F(\overline{x}^{(s-1)}) - F^{\star}} + \frac{\varepsilon}{2} \leq \frac{1}{2^S}\left(\Exp{F(\overline{x}^{(0)}) - F^{\star}} -\varepsilon\right) + \varepsilon.
}
Consequently, $\set{\Exp{F(\overline{x}^{(s)}) - F^{\star}}}$ converges linearly to an $\varepsilon$-ball around zero.
\end{corollary}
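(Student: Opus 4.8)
The plan is to reduce the $\varepsilon$-linear rate to the single-stage estimate of Theorem~\ref{th:singe_loop_adapt_step}(a) and then unroll a geometric recursion. Since Algorithm~\ref{alg:A2} restarts Algorithm~\ref{alg:A1} at each stage with $x_0^{(s)} = \overline{x}^{(s-1)}$ and now returns the \emph{randomized} iterate $\overline{x}^{(s)} = \overline{x}_m^{(s)} \sim \Unip{\pb}{\sets{x_t^{(s)}}_{t=0}^m}$, I would apply the bound \eqref{eq:adaptive_key_est} conditionally on $\overline{x}^{(s-1)}$ to obtain
\[\Exps{s}{\norms{\Grad_{\eta}(\overline{x}^{(s)})}^2} \le \frac{8\sqrt{2}\big(L + \sqrt{\delta L}[\tilde b(m+1)]^{1/4}\big)}{L\eta^2\delta(m+1)}\big[F(\overline{x}^{(s-1)}) - F^{\star}\big] + \frac{8\sigma^2}{L^2\eta^2[\tilde b(m+1)]^{1/2}},\]
where $\Exps{s}{\cdot}$ denotes the conditional expectation over the randomness of stage $s$. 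Combining this with the $\tau$-gradient dominant condition \eqref{eq:grad_dominance} at $\overline{x}^{(s)}$, i.e. $F(\overline{x}^{(s)}) - F^{\star} \le \tfrac{\tau}{2}\norms{\Grad_{\eta}(\overline{x}^{(s)})}^2$, and taking total expectation via the tower property yields a one-step recursion $a_s \le C\,a_{s-1} + D$ with $a_s := \Exp{F(\overline{x}^{(s)}) - F^{\star}}$, $C := \frac{\tau}{2}\cdot\frac{8\sqrt{2}(L + \sqrt{\delta L}[\tilde b(m+1)]^{1/4})}{L\eta^2\delta(m+1)}$, and $D := \frac{\tau}{2}\cdot\frac{8\sigma^2}{L^2\eta^2[\tilde b(m+1)]^{1/2}}$.

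The crux is to show that the prescribed $m+1$ and $\tilde b$ force $C \le \tfrac12$ and $D = \tfrac{\varepsilon}{2}$. I would first compute the single quantity that controls both, namely the product $\tilde b(m+1) = \frac{64\tau^2\sigma^4}{L^4\eta^4\varepsilon^2}$, after which $[\tilde b(m+1)]^{1/2} = \frac{8\tau\sigma^2}{L^2\eta^2\varepsilon}$ gives $D = \tfrac{\varepsilon}{2}$ immediately, and $[\tilde b(m+1)]^{1/4} = \frac{2\sqrt{2}\,\tau^{1/2}\sigma}{L\eta\sqrt{\varepsilon}}$. For the contraction factor I would split $C = C_1 + C_2$ according to the two summands $L$ and $\sqrt{\delta L}[\tilde b(m+1)]^{1/4}$ of the numerator. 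Substituting the stated $m+1$, the second piece collapses to the clean, $\varepsilon$-free value $C_2 = \frac{\sqrt{L\delta}}{2(L + \sqrt{L\delta})}$, while the first piece must be shown to satisfy $C_1 \le \frac{L}{2(L+\sqrt{L\delta})}$; the two then add exactly to $\tfrac12$.

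The delicate point, and the step I expect to be the main obstacle, is precisely this bound on $C_1$: it does not hold unconditionally but only once $\varepsilon$ lies below the threshold $\frac{8\tau\sigma^2}{L^2\eta^2}$, i.e. in the variance-dominated regime assumed throughout the paper. This is exactly where the constant $32$ in the definition of $m+1$ is used: it makes the $L$-term of the numerator subordinate to the $\sqrt{\delta L}[\tilde b(m+1)]^{1/4}$-term, so that the two contributions sum to at most $\tfrac12$ rather than merely to a quantity below $1$ (the latter being all one gets from the crude bound $L + \sqrt{\delta L}[\tilde b(m+1)]^{1/4} \le 2\sqrt{\delta L}[\tilde b(m+1)]^{1/4}$). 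Keeping careful track of which summand of the two-term numerator produces the $\varepsilon$-free contraction and which one needs the smallness of $\varepsilon$ is the only genuinely non-mechanical part of the verification.

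Finally, with $a_s \le \tfrac12 a_{s-1} + \tfrac{\varepsilon}{2}$ established for every $s$, I would subtract the fixed point $\varepsilon$ (the solution of $a = \tfrac12 a + \tfrac{\varepsilon}{2}$) to get $a_s - \varepsilon \le \tfrac12(a_{s-1} - \varepsilon)$, and iterate down to $s=0$ to conclude $a_S \le \frac{1}{2^S}(a_0 - \varepsilon) + \varepsilon$, which is exactly \eqref{eq:eps_linear_rate} with $a_0 = \Exp{F(\overline{x}^{(0)}) - F^{\star}}$. Linear convergence to an $\varepsilon$-ball around $F^{\star}$ is then immediate since $\frac{1}{2^S}(a_0 - \varepsilon) \to 0$ as $S \to \infty$.
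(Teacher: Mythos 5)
Your proposal is correct and is essentially the paper's own argument: apply the stage-wise bound of Theorem~\ref{th:singe_loop_adapt_step}(a) with initial point $\overline{x}^{(s-1)}$, multiply by $\tfrac{\tau}{2}$, invoke \eqref{eq:grad_dominance}, and unroll the resulting recursion $a_s \le \tfrac{1}{2}a_{s-1}+\tfrac{\varepsilon}{2}$ by subtracting the fixed point $\varepsilon$. The only difference is directional bookkeeping: the paper \emph{derives} $m+1$ and $\tilde{b}$ by setting $\frac{4\sqrt{2}\tau\tilde{b}^{1/4}(L+\sqrt{L\delta})}{L\eta^2\delta(m+1)^{3/4}}=\tfrac{1}{2}$ and $\frac{4\tau\sigma^2}{L^2\eta^2[\tilde{b}(m+1)]^{1/2}}=\tfrac{\varepsilon}{2}$, i.e.\ it works with the relaxed numerator $[\tilde{b}(m+1)]^{1/4}(L+\sqrt{L\delta})$, while you \emph{verify} the stated choices against the unrelaxed numerator $L+\sqrt{\delta L}[\tilde{b}(m+1)]^{1/4}$ by splitting it into an $\varepsilon$-free piece equal to $\frac{\sqrt{L\delta}}{2(L+\sqrt{L\delta})}$ and a remainder. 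Your explicit threshold $\varepsilon \le \frac{8\tau\sigma^2}{L^2\eta^2}$ is exactly the condition $\tilde{b}(m+1)\ge 1$ under which the paper's relaxation $L+\sqrt{\delta L}[\tilde{b}(m+1)]^{1/4}\le [\tilde{b}(m+1)]^{1/4}(L+\sqrt{L\delta})$ is valid, so the paper's proof relies on the same regime silently where you state it openly; it is not a gap in either argument.
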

}

\begin{proof}
Similar to the proof of \eqref{eq:est5d_1}, using the choice of $\overline{x}^{(s)}$ we can show that
\myeqn{
\Exp{\norms{\Grad_{\eta}(\overline{x}^{(s)})}^2}  \leq \dfrac{8\sqrt{2} \tilde{b}^{1/4}(L + \sqrt{L\delta})}{L\eta^2\delta(m+1)^{3/4}}\Exp{F(\overline{x}^{(s-1)}) - F^{\star}} + \dfrac{8\sigma^2}{L^2\eta^2\tilde{b}^{1/2}(m+1)^{1/2}}.
}
Multiplying this inequality by $\frac{\tau}{2}$ and then using \eqref{eq:grad_dominance}, we can show that
\myeqn{
\Exp{F(\overline{x}^{(s)}) - F^{\star}} \leq \dfrac{4\sqrt{2}\tau \tilde{b}^{1/4}(L + \sqrt{L\delta})}{L\eta^2\delta(m+1)^{3/4}} \Exp{F(\overline{x}^{(s-1)}) - F^{\star}} + \dfrac{4\tau\sigma^2}{L^2\eta^2\tilde{b}^{1/2}(m+1)^{1/2}}.
}
Assume that $\frac{4\tau \sigma^2}{L^2\eta^2\tilde{b}^{1/2}(m+1)^{1/2}} = \frac{\varepsilon}{2}$ and $\frac{4\sqrt{2}\tau\tilde{b}^{1/4}(L + \sqrt{L\delta})}{L\eta^2\delta(m+1)^{3/4}} = \frac{1}{2}$.
These relations lead to $m+1 := \frac{32(L + \sqrt{L\delta})\tau^{3/2}\sigma}{L^2\eta^3\delta \sqrt{\varepsilon}}$ and $\tilde{b} := \frac{2\delta\tau^{1/2}\sigma^3}{L^2(L+\sqrt{2L\delta})\eta\varepsilon^{3/2}}$.
Hence, the last inequality can be simplified as 
\myeqn{
\Exp{F(\overline{x}^{(s)}) - F^{\star}} \leq \frac{1}{2}\Exp{F(\overline{x}^{(s-1)}) - F^{\star}} + \frac{\varepsilon}{2}.
}
Denote $\Delta_s := \Exp{F(\overline{x}^{(s)}) - F^{\star}}$.
Then, the last inequality becomes $\Delta_s - \varepsilon \leq \frac{1}{2}\left(\Delta_{s-1} - \varepsilon\right)$.
Whenever $\Delta_s \geq \varepsilon$, by induction, we have $\Delta_S - \varepsilon \leq \frac{1}{2^S}\left(\Delta_0 - \varepsilon\right)$.
This implies \eqref{eq:eps_linear_rate}.
Therefore, $\set{\Exp{F(\overline{x}^{(s)}) - F^{\star}}}$ converges linearly to an $\varepsilon$-ball around zero.
\Eproof
\end{proof}

\beforesubsec
\subsection{\bf Applications to the finite-sum and non-composite settings}\label{subsec:finite_sum}
\aftersubsec
We first consider the finite-sum setting \eqref{eq:finite_sum} and then discuss the non-composite form of \eqref{eq:ncvx_prob}.

\beforesubsubsec
\subsubsection{\mytxtbi{The finite-sum case}}
\aftersubsubsec
We can apply both Algorithm~\ref{alg:A1} and Algorithm~\ref{alg:A2} to solve the finite-sum problem \eqref{eq:finite_sum}.
We can use a mini-batch $\widetilde{\Bc}_t$ of the size $\tilde{b}\in [n]$ to approximate $v_0$.
However, we make the following changes in Algorithm~\ref{alg:A1} to solve \eqref{eq:finite_sum}:
\begin{compactitem}
\item[$\diamond$] We compute $v_0 := \nabla{f}(x_0) = \frac{1}{n}\sum_{i=1}^n\nabla{f}_i(x_0)$, the full gradient of $f$ at $x_0$.
\item[$\diamond$] We evaluate $v_{t}  := \beta v_{t-1}  + \beta\big(\nabla{f}_{i_t}(x_{t}) - \nabla{f}_{i_t}(x_{t-1})\big) + (1-\beta)\nabla{f}_{j_t}(x_{t})$, where $i_t, j_t \in [n]$ are two independent random indices \nhan{uniformly generated from} $[n]$. 
\end{compactitem}
Since we set $\tilde{b} = n$, we need to change the weight $\beta$ in Theorems~\ref{th:constant_stepsize_convergence}, \ref{th:singe_loop_adapt_step}, and \ref{th:double_loop_convergence} to $\beta := 1 - \frac{c_1}{(m+1)^{2/3}}$ for some $0 < c_1 \leq (m+1)^{2/3}$.
With this choice of $\tilde{b}$ and $\beta$, the conclusions of Theorems~\ref{th:constant_stepsize_convergence}, \ref{th:singe_loop_adapt_step}, and \ref{th:double_loop_convergence} remain true.
But the number of stochastic gradient evaluations is at most, e.g., $\Tc_m := \BigO{n + \frac{\Delta_0^{3/2}}{\varepsilon^3}}$ in Theorem~\ref{th:constant_stepsize_convergence}.
To avoid overloading this paper, we omit the analysis here.

In terms of assumptions, apart from Assumptions~\ref{as:A0} and \ref{as:A1}, we still require Assumption~\ref{as:A1b} (i.e., \eqref{eq:bounded_variance2}) to hold for \eqref{eq:finite_sum}.
Hence, Algorithm~\ref{alg:A1} can solve \eqref{eq:finite_sum}, but it requires stronger assumptions (Assumptions~\ref{as:A0}, \ref{as:A1}, and \ref{as:A1b}) than ProxSVRG \cite{reddi2016proximal}, SpiderBoost \cite{wang2018spiderboost}, and ProxSARAH \cite{Pham2019}.
However, as a compensation, Algorithm~\ref{alg:A1}  uses a single-loop.

\beforesubsubsec
\subsubsection{\mytxtbi{The non-composite settings}}
\aftersubsubsec
If $\psi = 0$, then we obtain a non-composite setting of \eqref{eq:ncvx_prob} and \eqref{eq:finite_sum}, respectively.
The analysis in Theorems~\ref{th:constant_stepsize_convergence}, \ref{th:singe_loop_adapt_step}, and \ref{th:double_loop_convergence} can be modified to cover the non-composite setting of \eqref{eq:ncvx_prob}:
\begin{compactitem}
\item[$\bullet$] Step~\ref{step:o3} of Algorithm~\ref{alg:A1} becomes $x_1 := x_0 - \hat{\eta}_0v_0$, where $\hat{\eta}_0$ is a new step-size.
\item[$\bullet$] Step~\ref{step:i3} of Algorithm~\ref{alg:A1} reduces to $x_{t+1} := x_t - \hat{\eta}_tv_t$, where $\hat{\eta}_t$ is a new step-size.
\item[$\bullet$] The step-size $\hat{\eta}_t := \frac{2}{L\left[1 + (1+4\alpha_m^2)^{1/2}\right]}$ which combines both $\eta_t$ and $\gamma_t$ in Theorem~\ref{th:constant_stepsize_convergence}, where $\beta := 1 - \frac{1}{[\tilde{b}(m+1)]^{1/2}}$ and $\alpha_m := \frac{\beta^2(1-\beta^{2m})}{1-\beta^2}$.
\end{compactitem}
For clarity of exposition, we \nhan{omit} the analysis of this variant here.

\beforesec
\section{Extensions to Mini-batch Variants}\label{sec:sgd_mini_batch}
\aftersec
We consider the mini-batch variants of Algorithm~\ref{alg:A1} and Algorithm~\ref{alg:A2} for solving \eqref{eq:ncvx_prob}.
More precisely, the mini-batch SARAH-SGD estimator $\hat{v}_t$ for $\nabla{f}(x_t)$ is defined as
\myeq{eq:mini_batch_grad}{
\hat{v}_t := \beta \hat{v}_{t-1} + \frac{\beta}{b}\sum_{\xi_t\in\Bc_t}\left(\nabla{f}_{\xi_t}(x_t) - \nabla{f}_{\xi_t}(x_{t-1})\right) + \frac{1-\beta}{\hat{b}}\sum_{\zeta_t\in\hat{\Bc}_t}\nabla{f}_{\zeta_t}(x_t),
}
where $\Bc_t$ is a mini-batch of size $b$ and $\hat{\Bc}_t$ is a mini-batch of size $\hat{b}$ and independent of $\Bc_t$.
Here, we fix $\beta \in (0, 1)$ and the mini-batch sizes $b\in\Nbb_{+}$ and $\hat{b}\in\Nbb_{+}$ for all $t\geq 0$.
Note that the estimator \eqref{eq:mini_batch_grad} is an instance of \eqref{eq:vhat_t} when $G = \nabla{f}$.
For the sake of presentation, we only consider the constant step-size variant as a consequence of Theorem~\ref{th:constant_stepsize_convergence}.
We state our first result in the following theorem, whose proof can be found in Appendix~\ref{apdx:th:mini_batch_constant_stepsize_convergence}.

\revise{
\begin{theorem}\label{th:mini_batch_constant_stepsize_convergence}
Assume that Assumptions~\ref{as:A0}, \ref{as:A1}, and \ref{as:A1b} hold.
Let $\sets{x_t}_{t=0}^m$ be the sequence generated by Algorithm~\ref{alg:A1} to solve \eqref{eq:ncvx_prob} using the mini-batch update for $\hat{v}_t$ as in \eqref{eq:mini_batch_grad} at Step~\ref{step:i2} instead of $v_t$,
and the following parameter configuration:
\myeq{eq:constant_step_sizes}{
\left\{\begin{array}{ll}
\beta_t &= \beta := 1 - \frac{ \hat{b}^{1/2}}{[\tilde{b}(m+1)]^{1/2}} \vspace{1ex}\\
\gamma_t &= \gamma := \frac{3c_0 \hat{b}^{1/4}b^{1/2}}{\sqrt{13} [\tilde{b}(m+1)]^{1/4}}  \vspace{1ex}\\
\eta_t & = \eta := \frac{2}{L(3 + \gamma)},
\end{array}\right.
}
where $1\leq \hat{b} \leq \tilde{b}(m+1)$ and $0 < c_0 \leq  \frac{\sqrt{13}}{3b^{1/2}}$ is given.
Then, the following statements hold:
\begin{compactitem}
\vspace{0.5ex}
\item[$\mathrm{(a)}$] The parameters $\beta$, $\gamma$, and $\eta$ satisfy $\beta \in [0, 1)$, $\gamma \in (0, 1]$, and $\frac{1}{2L} < \eta \leq \frac{2}{3L}$.

\vspace{0.65ex}
\item[$\mathrm{(b)}$] Let $\overline{x}_m \sim \Uni{\set{x_t}_{t=0}^m}$ be the output of Algorithm~\ref{alg:A1}.
Then, we have
\myeq{eq:th61_main_estimate}{
\Exp{\norms{ \Grad_{\eta}(\overline{x}_m)}^2} \leq \frac{16\sqrt{13}L \tilde{b}^{1/4} \left[F(x_0) - F^{\star}\right]}{3c_0 \hat{b}^{1/4}b^{1/2}(m+1)^{3/4}} {~} + {~} \frac{208\sigma^2}{9\big[\hat{b}\tilde{b}(m+1)\big]^{1/2}}.
}
\item[$\mathrm{(c)}$]
Let us choose $\hat{b} = b \in\Nbb_{+}$ and $\tilde{b}  := c_1^2[b(m+1)]^{1/3}$ for some $c_1 \geq \frac{b^{1/3}}{(m+1)^{2/3}}$.
Then, the step-size $\gamma$ becomes $\gamma := \frac{3c_0b^{2/3}}{\sqrt{13c_1}(m+1)^{1/3}}$.
Moreover, for any $\varepsilon > 0$, the number of iterations $m$ to obtain $\Exp{\norms{\Grad_{\eta}(\overline{x}_m)}^2} \leq \varepsilon^2$ is at most
\myeqn{
m := \left\lfloor \frac{\Delta_0^{3/2}}{b\varepsilon^3}\right\rfloor,~~~~\text{where}~~\Delta_0 := \frac{16}{3}\left[\frac{\sqrt{13 c_1}L\left[F(x_0) - F^{\star}\right]}{c_0} + \frac{13\sigma^2}{3c_1}\right].
}
This is also the total number $\Tc_{\prox}$ of proximal operations $\prox_{\eta\psi}$, i.e., $\Tc_{\prox} = m$.
The total number of stochastic gradient evaluations $\nabla{f_{\xi}}(x_t)$ is at most
\myeqn{
\Tc_m := \left\lfloor \frac{c_1^2\Delta_0^{1/2}}{\varepsilon} + \frac{3\Delta_0^{3/2}}{\varepsilon^3} \right\rfloor.
}
\end{compactitem}
\end{theorem}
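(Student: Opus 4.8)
The plan is to mirror the proof of Theorem~\ref{th:constant_stepsize_convergence} step by step, replacing every appeal to the single-sample variance recursion \eqref{eq:key_estimate10} by its mini-batch counterpart \eqref{eq:key_pro_of_vhat_t} from Lemma~\ref{le:key_pro_of_vhat_t} (equivalently, the telescoped bound \eqref{eq:vt_variance_bound_new_batch} of Lemma~\ref{le:upper_bound_new_batch}). First I would observe that the one-iteration descent estimate \eqref{eq:upper_bound_new0} of Lemma~\ref{le:upper_bound_new2} is insensitive to the internal structure of the gradient estimator: it couples $\Exp{F(x_{t+1})}$ only to $\Exp{F(x_t)}$, to the error $\Exp{\norms{\nabla f(x_t) - \hat{v}_t}^2}$, and to $\Exp{\norms{\widehat{x}_{t+1}-x_t}^2}$, with the same $\theta_t,\kappa_t$ from \eqref{eq:theta_kappa}. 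Hence it holds verbatim with $\hat{v}_t$ in place of $v_t$, and only the way the error term is propagated across iterations changes.

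Next I would rebuild the Lyapunov argument of Lemma~\ref{le:descent_pro} feeding in \eqref{eq:key_pro_of_vhat_t}. Two modifications occur: (i) the factor $\rho = 1/b$ (expectation case) now multiplies the $L^2\norms{x_t-x_{t-1}}^2$ term, so the second condition in \eqref{eq:key_cond1001} becomes $\kappa_t - \rho\,\alpha_{t+1}\beta_t^2\gamma_t^2 L^2 \ge 0$; and (ii) the additive noise from the unbiased mini-batch $u_t$ of size $\hat{b}$ is $(1-\beta_t)^2\sigma^2/\hat{b}$ rather than $(1-\beta_t)^2\sigma^2$. With the same selections $c_t := L$, $r_t := 1$, $q_t := \tfrac{L\gamma_t}{2}$, constant $\alpha_t := \alpha$, $\beta_t := \beta$, $\eta_t := \eta$, $\gamma_t := \gamma$, and the initial bound $\Exp{\norms{\hat{v}_0 - \nabla f(x_0)}^2} \le \sigma^2/\tilde{b}$, telescoping yields the mini-batch analogue of \eqref{eq:th41_est2}: a right-hand side of the form $\frac{4}{L\eta^2\gamma(m+1)}[F(x_0)-F^{\star}] + \frac{2\alpha\sigma^2}{L\gamma\eta^2}\big[\frac{1}{\tilde{b}(m+1)} + \frac{(1-\beta)^2}{\hat{b}}\big]$.

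From here everything is parameter bookkeeping in $b$ and $\hat{b}$. I would balance the bracket by setting $\frac{(1-\beta)^2}{\hat{b}} = \frac{1}{\tilde{b}(m+1)}$, which forces $\beta := 1 - \frac{\hat{b}^{1/2}}{[\tilde{b}(m+1)]^{1/2}}$ as in \eqref{eq:constant_step_sizes} (and $\hat{b}\le\tilde{b}(m+1)$ guarantees $\beta\in[0,1)$). Then, exactly as in part (a) of Theorem~\ref{th:constant_stepsize_convergence}, I would impose $\eta := \frac{2}{L(3+\gamma)}$, so that $\frac{2}{\eta}-L\gamma-2L=L$ and $1+L^2\eta^2\le\tfrac{13}{9}$. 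The first Lyapunov condition gives $\gamma \le \frac{9L\alpha\hat{b}^{1/2}}{13[\tilde{b}(m+1)]^{1/2}}$ (using $1-\beta^2\ge1-\beta$), while the second, now carrying $\rho=1/b$, gives $\gamma \le \frac{b}{L\alpha}$; equating the two fixes the base pair $\alpha^{\ast} = \frac{\sqrt{13b}\,[\tilde{b}(m+1)]^{1/4}}{3L\hat{b}^{1/4}}$ and $\gamma^{\ast} = \frac{3b^{1/2}\hat{b}^{1/4}}{\sqrt{13}[\tilde{b}(m+1)]^{1/4}}$. The free factor $c_0$ then rescales the pair, $\alpha := c_0\alpha^{\ast}$ and $\gamma := c_0\gamma^{\ast}$, which keeps both conditions valid for $c_0\le 1$ and—crucially—leaves the ratio $\alpha/\gamma$ (hence the additive-variance term) unchanged while introducing a $1/c_0$ factor only into the optimization term; the stated range $0 < c_0 \le \frac{\sqrt{13}}{3b^{1/2}}$ is exactly what enforces $\gamma\le 1$. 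Substituting $\alpha,\gamma,\beta$ back and using $\overline{x}_m\sim\Uni{\set{x_t}_{t=0}^m}$ produces \eqref{eq:th61_main_estimate}; part (c) is the same algebra after inserting $\hat{b}=b$ and $\tilde{b} := c_1^2[b(m+1)]^{1/3}$, with the $\tfrac{1}{b}$ in the iteration count emerging from the $b^{2/3}$ factor in the denominator of \eqref{eq:th61_main_estimate}.

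The main obstacle I anticipate is keeping the dependence on $b$ and $\hat{b}$ cleanly separated through the two Lyapunov conditions: the factor $\rho=1/b$ relaxes the step-size constraint and lets $\gamma$ grow like $b^{1/2}$, whereas $\hat{b}$ enters only the additive-variance balance that fixes $\beta$; conflating these, or mishandling the $1-\beta^2\ge 1-\beta$ slack, would corrupt the constants. The second delicate point is the $c_0$-rescaling: one must scale $\alpha$ and $\gamma$ \emph{together} so that $\alpha/\gamma$ stays invariant, otherwise a spurious $1/c_0$ would appear in the variance term of \eqref{eq:th61_main_estimate} and break the match with the single-sample result.
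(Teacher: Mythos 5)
Your proposal is correct and follows essentially the same route as the paper's own proof: it feeds the mini-batch recursion of Lemma~\ref{le:key_pro_of_vhat_t} (with $\rho = \tfrac{1}{b}$ and unbiased noise $\sigma^2/\hat{b}$) into the one-iteration estimate \eqref{eq:upper_bound_new0}, forms the same Lyapunov function, balances the two conditions to fix $\beta$, $\alpha$, and $\gamma$ with $\eta = \tfrac{2}{L(3+\gamma)}$, and finishes with identical bookkeeping for parts (b) and (c). If anything, your explicit joint rescaling $\alpha := c_0\alpha^{\ast}$, $\gamma := c_0\gamma^{\ast}$ is more careful than the paper's write-up, which rescales only $\gamma$ by $c_0$ yet then computes $\alpha/\gamma$ as if both were rescaled; your version is the one that actually reproduces \eqref{eq:th61_main_estimate} with no spurious $1/c_0$ factor in the variance term.
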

}

Theorem~\ref{th:mini_batch_constant_stepsize_convergence} states that using the mini-batch estimator $\hat{v}_t$ from \eqref{eq:mini_batch_grad}, the total number of stochastic gradient evaluations $\Tc_m$ in Algorithm~\ref{alg:A1} remains the same as in Theorem~\ref{th:constant_stepsize_convergence}.
However, the total number of proximal operations $\Tc_{\prox}$ reduces from $\BigO{\frac{\sigma}{\varepsilon^3}}$ to $\BigO{\frac{\sigma}{b\varepsilon^3}}$.

We can also modify Algorithm~\ref{alg:A2} to obtain a mini-batch variant.
The following theorem shows the convergence of this mini-batch variant whose proof can be found in Appendix~\ref{apdx:th:double_loop_convergence_minibatch}.

\revise{
\begin{theorem}\label{th:double_loop_convergence_minibatch}
Let $\sets{x^{(s)}_t}_{t=0\to m}^{s=1\to S}$ be the sequence generated by Algorithm~\ref{alg:A2} to solve \eqref{eq:ncvx_prob} using the mini-batch update for $\hat{v}_t$ as in \eqref{eq:mini_batch_grad} at Step~\ref{step:i2} instead of $v_t$, $\eta \in (0, \frac{1}{L})$, and 
\myeq{eq:step_size_gamma_t}{
\gamma_m := \frac{\delta}{L}~~\text{and}~~\gamma_t := \frac{\delta b}{Lb + L(1+L^2\eta^2)\big[\beta^2\gamma_{t+1} + \beta^4\gamma_{t+2} + \cdots + \beta^{2(m-t)}\gamma_m\big]},
}
where  $\delta := \frac{2}{\eta} - 2L > 0$ and $\beta := 1 - \frac{ \hat{b}^{1/2}}{[\tilde{b}(m+1)]^{1/2}}$.
Then, the following statements hold:
\begin{compactitem}
\item[$\mathrm{(a)}$]
If we choose the output of Algorithm~\ref{alg:A2} as $\overline{x}_T \sim \Unip{\pb}{\sets{x^{(s)}_t}_{t=0\to m}^{s=1\to S}}$ with the probability $\pb_t  := \Prob{\overline{x}_T = x_t^{(s)}} = \frac{\gamma_t}{S\Sigma_m}$, then the following estimate holds
\myeq{eq:double_loop_est2}{
\begin{array}{lcl}
\Exp{\Vert \Grad_{\eta}(\overline{x}_T)\Vert^2} & \leq &  \dfrac{8\sqrt{2}\left[ \hat{b}^{1/4}b^{1/2}L {~} + {~} [\tilde{b}(m+1)]^{1/4} \sqrt{L\delta}\right]}{L\eta^2\delta S(m+1)\hat{b}^{1/4}b^{1/2}}\big[F(\overline{x}^{(0)}) - F^{\star}\big] \vspace{1ex}\\
&& + {~}  \dfrac{8\sigma^2}{L^2\eta^2[\hat{b}\tilde{b}(m+1)]^{1/2}}.
\end{array}
}
\item[$\mathrm{(b)}$]
Given  $\varepsilon > 0$ and $c_1 > \frac{1}{m+1}$, let us choose   $\hat{b} = b \in \Nbb_{+}$ such that $1\leq b \leq \frac{2\sqrt{6\delta}}{L\sqrt{L}\eta\varepsilon}$, and
\myeqn{
\tilde{b} :=  \frac{24c_1}{L^2\eta^2 \varepsilon^2}\max\set{\sigma^2,1}~~~~\text{and}~~m +1 := \frac{24}{c_1L^2\eta^2 b\varepsilon^2}\max\set{\sigma^2,1}.
}
Then, the total number of iterations $T$ to achieve $\Exp{\norms{\Grad_{\eta}(\overline{x}_T)}^2} \leq \varepsilon^2$ is at most
\myeq{eq:S_iterations2}{
T :=  (m+1)S =  \left\lfloor \frac{96\sqrt{3}\max\set{1,\sigma}}{\eta^3L\sqrt{L\delta} b\varepsilon^3}\big[F(\overline{x}^{(0)}) - F^{\star}\big] \right\rfloor.
}
This is also the total number of proximal operations $\prox_{\eta\psi}$.
The total number of stochastic gradient evaluations $\nabla{f}_{\xi}(x_t)$ is at most 
\myeqn{
\Tc_{\nabla{f}} := \left\lfloor (c_1^2 + 3)\frac{96\sqrt{3}\max\set{1,\sigma}}{\eta^3L\sqrt{L\delta} \varepsilon^3}\big[F(\overline{x}^{(0)}) - F^{\star}\big] \right\rfloor.
}
\end{compactitem}
\end{theorem}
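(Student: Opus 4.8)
The plan is to run the single-stage, adaptive-step-size analysis that already underlies Theorem~\ref{th:double_loop_convergence}, but to carry the mini-batch variance bound of Lemma~\ref{le:upper_bound_new_batch} in place of its single-sample counterpart, and then telescope across the $S$ restarts. First I would fix a stage $s$, abbreviate $x_t := x_t^{(s)}$, and note that the one-iteration descent estimate \eqref{eq:upper_bound_new0} is valid with $\hat v_t$ replacing $v_t$, since its proof does not use the particular form of the estimator. Choosing $c_t := L$, $r_t := 1$, and $q_t := \frac{L\gamma_t}{2}$ in Lemma~\ref{le:upper_bound_new2} (exactly as in the proof of Theorem~\ref{th:mini_batch_constant_stepsize_convergence}), the coefficients in \eqref{eq:theta_kappa} simplify to $\theta_t = \frac{(1+L^2\eta^2)\gamma_t}{L}$ and $\kappa_t = \gamma_t(\delta - L\gamma_t)$ with $\delta := \frac{2}{\eta} - 2L > 0$. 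Summing \eqref{eq:upper_bound_new0} over $t = 0,\dots,m$ telescopes the objective values and, after discarding the nonnegative $\tilde\sigma_t^2$ term, yields
\[ \frac{L\eta^2}{4}\sum_{t=0}^m\gamma_t\Exp{\norms{\Grad_{\eta}(x_t)}^2} \leq \Exp{F(x_0) - F(x_{m+1})} + \frac{1}{2}\sum_{t=0}^m\theta_t\Exp{\norms{\nabla f(x_t) - \hat v_t}^2} - \frac{1}{2}\sum_{t=0}^m\kappa_t\Exp{\norms{\widehat x_{t+1} - x_t}^2}. \]

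Next I would insert the mini-batch variance bound \eqref{eq:vt_variance_bound_new_batch}, use $x_{i+1} - x_i = \gamma_i(\widehat x_{i+1} - x_i)$ together with $\omega_{i,t} = \beta^{2(t-i)}$, and exchange the order of summation in the resulting double sum. The net coefficient multiplying each $\Exp{\norms{\widehat x_{i+1} - x_i}^2}$ then equals $\frac{\rho L^2\gamma_i^2}{2}\sum_{t=i+1}^m\theta_t\beta^{2(t-i)} - \frac{\kappa_i}{2}$ with $\rho = \frac{1}{b}$, and a direct computation shows that the adaptive rule \eqref{eq:step_size_gamma_t} is precisely the choice that forces this coefficient to vanish. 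This cancellation is the crux of the argument and the step I expect to be the main obstacle: one must verify that the factor $b$ appearing in the numerator and in the $Lb$ term of \eqref{eq:step_size_gamma_t} exactly absorbs the factor $\rho = 1/b$ that Lemma~\ref{le:upper_bound_new_batch} attaches to the movement terms, and that the recursion defines a positive, strictly increasing sequence $0 < \gamma_0 < \cdots < \gamma_m$ (via the mini-batch analogue of Lemma~\ref{le:adaptive_step_size}). Once the movement terms are eliminated, only the initialization term $\omega_t\Exp{\norms{\hat v_0 - \nabla f(x_0)}^2} \leq \omega_t\frac{\sigma^2}{\tilde b}$ and the term $\hat\rho S_t$ survive; bounding $\Exp{\norms{u_{i+1} - \nabla f(x_{i+1})}^2} \leq \frac{\sigma^2}{\hat b}$ and summing the geometric weights produces a per-stage estimate of the form
\[ \frac{L\eta^2}{4}\sum_{t=0}^m\gamma_t\Exp{\norms{\Grad_{\eta}(x_t^{(s)})}^2} \leq \Exp{F(\overline x^{(s-1)}) - F(\overline x^{(s)})} + V, \]
where $V$ collects the variance contributions (the $\sigma^2/\tilde b$ coming from $\hat v_0$ and the $(1-\beta)^2\sigma^2/\hat b$ coming from the unbiased mini-batch through $S_t$) and, after division by $\Sigma_m := \sum_{t=0}^m\gamma_t$, is of order $\frac{\sigma^2\Sigma_m}{L[\hat b\tilde b(m+1)]^{1/2}}$.

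Then I would telescope across the restarts. Because $x_0^{(s)} = \overline x^{(s-1)}$ and $\overline x^{(s)} = x_{m+1}^{(s)}$, summing the per-stage estimate over $s = 1,\dots,S$ collapses the objective differences into $F(\overline x^{(0)}) - F^{\star}$ (using $F(\overline x^{(S)}) \geq F^{\star}$), giving $\frac{L\eta^2}{4}\sum_{s=1}^S\sum_{t=0}^m\gamma_t\Exp{\norms{\Grad_{\eta}(x_t^{(s)})}^2} \leq F(\overline x^{(0)}) - F^{\star} + SV$. Dividing by $\frac{L\eta^2}{4}S\Sigma_m$ and noting that, under the output law $\pb_t = \frac{\gamma_t}{S\Sigma_m}$, the left-hand side is exactly $\Exp{\norms{\Grad_{\eta}(\overline x_T)}^2}$, I obtain a bound of the shape $\frac{4}{L\eta^2 S\Sigma_m}[F(\overline x^{(0)}) - F^{\star}] + \frac{4V}{L\eta^2\Sigma_m}$. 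Inserting the lower bound $\Sigma_m \gtrsim \frac{\delta(m+1)\hat b^{1/4}b^{1/2}}{\hat b^{1/4}b^{1/2}L + [\tilde b(m+1)]^{1/4}\sqrt{L\delta}}$ supplied by the adaptive-step-size lemma then produces \eqref{eq:double_loop_est2}, proving part~(a).

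Finally, for part~(b) I would substitute $\hat b = b$ together with the prescribed $\tilde b$ and $m+1$ into \eqref{eq:double_loop_est2}, balance the two resulting terms so that each is at most $\frac{\varepsilon^2}{2}$, and solve the first term for the number of restarts $S$; this yields $T := (m+1)S$ as in \eqref{eq:S_iterations2}. The oracle counts then follow by bookkeeping: each stage spends $\tilde b$ gradient evaluations to form $\hat v_0$ and $3b(m+1)$ evaluations plus $m+1$ proximal steps in the inner loop, so multiplying by $S$ gives $\Tc_{\nabla f}$ and $\Tc_{\prox}$. The admissibility constraint $1 \leq b \leq \frac{2\sqrt{6\delta}}{L\sqrt{L}\eta\varepsilon}$ is exactly what keeps $m + 1 \geq 1$ under the prescribed choice of $m$, so that the inner loop remains well-defined.
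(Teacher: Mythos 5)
Your part (a) is correct and is essentially the paper's own argument: the same choices $c_t := L$, $r_t := 1$, $q_t := \frac{L\gamma_t}{2}$; the same observation that the rule \eqref{eq:step_size_gamma_t} (with the factor $b$ in the numerator and in $Lb$ absorbing the $\rho = \tfrac{1}{b}$ from Lemma~\ref{le:upper_bound_new_batch}) annihilates the coefficients of the movement terms; the same application of Lemma~\ref{le:adaptive_step_size} with $\omega := \beta^2$ and $\epsilon := \frac{1+L^2\eta^2}{Lb}$ to lower bound $\Sigma_m$; and the same telescoping over the $S$ stages.

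Part (b), however, contains a genuine gap. Write $\Delta_F := F(\overline{x}^{(0)}) - F^{\star}$. After substituting $\tilde{b} = c_1^2 b(m+1)$, the first term of \eqref{eq:double_loop_est2} splits into \emph{two} pieces, $\frac{8\sqrt{2}\Delta_F}{\eta^2\delta S(m+1)}$ and $\frac{8\sqrt{2c_1}\Delta_F}{\sqrt{L\delta}\,\eta^2 S (m+1)^{1/2} b^{1/2}}$, so the paper works with a \emph{three}-way split at level $\varepsilon^2/3$: the second piece is set \emph{equal} to $\varepsilon^2/3$, which is what defines $S$ and produces exactly $(m+1)bS = \frac{96\sqrt{3}\max\{1,\sigma\}\Delta_F}{\eta^3 L\sqrt{L\delta}\,\varepsilon^3}$, i.e.\ \eqref{eq:S_iterations2}; the $\sigma^2$-term is $\leq \varepsilon^2/3$ by the choice of $m$ and $\tilde{b}$; and the first piece is $\leq \varepsilon^2/3$ \emph{precisely because of} the constraint $b \leq \frac{2\sqrt{6\delta}}{L\sqrt{L}\eta\varepsilon}$. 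Your two-way balancing at $\varepsilon^2/2$, solving the entire first term of \eqref{eq:double_loop_est2} for $S$, yields an $S$ that is a sum of two terms with constant $16$ instead of $24$, and the resulting $T = (m+1)S$ does not coincide with \eqref{eq:S_iterations2}; in fact, for part of the admissible range of $b$ (e.g.\ $\sigma < 2$ and $b$ near its upper bound, where your $T$ exceeds the claimed one) your argument fails to show that the $T$ stated in the theorem suffices. Relatedly, your reading of the constraint on $b$ is incorrect: it is not what keeps $m+1 \geq 1$ (that would be the far weaker $O(\varepsilon^{-2})$ condition $b \leq \frac{24\max\{\sigma^2,1\}}{c_1 L^2\eta^2\varepsilon^2}$, obtained directly from the prescribed $m$); it is exactly the condition under which $\frac{8\sqrt{2}\Delta_F}{\eta^2\delta S(m+1)} \leq \frac{\varepsilon^2}{3}$ holds for the $S$ fixed by the middle piece. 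Repairing part (b) requires replacing your two-way split by this three-way split and rederiving the role of the bound on $b$ accordingly.
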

}

\begin{remark}[\textbf{Mini-batch and step-size trade-off}]\label{re:batch_step_size_trade_off}
\revise{
From Theorem~\ref{th:mini_batch_constant_stepsize_convergence}(c), we can see that $\gamma := \frac{3c_0b^{2/3}}{\sqrt{13c_1}(m+1)^{1/3}}$.
Clearly, if we use a large mini-batch size $b$ for $\hat{v}_t$, then we obtain a large value of the step-size $\gamma$.
Assume that $\gamma \approx 1$, which  is equivalent to $\frac{3c_0b^{2/3}}{\sqrt{13c_1}(m+1)^{1/3}} \approx 1$.
Moreover, from Theorem~\ref{th:mini_batch_constant_stepsize_convergence}(c), we also have $b(m+1) = \frac{\Delta_0^{3/2}}{\varepsilon^3}$.
Combining both conditions, we can roughly set $b \approx \frac{\sqrt{13c_1}\Delta_0^{1/2}}{3c_0\varepsilon}$ and $m + 1 \approx \frac{3c_0\Delta_0}{\sqrt{13c_1}\varepsilon^2}$.
Empirical evidence in Section~\ref{sec:num_experiments} will show that large step-size $\gamma$ leads to better performance.

For the mini-batch double-loop variant stated in Theorem~\ref{th:double_loop_convergence_minibatch}, the update \eqref{eq:step_size_gamma_t} of $\gamma_t$ hints that if $m$ is small, then the sequence of step-sizes $\set{\gamma_t}_{t=0}^m$ is large.
From Theorem~\ref{th:double_loop_convergence_minibatch}(b), we have $m  := \left\lfloor\frac{24}{c_1L^2\eta^2 b\varepsilon^2}\max\set{\sigma^2,1}\right\rfloor$.
Therefore, to obtain a small $m$, we need to choose $b$ large.
In this case, the total number of proximal operations $\prox_{\eta\psi}$ decreases, but the total number of stochastic gradient evaluations remains unchanged.}
\end{remark}

\begin{remark}[\textbf{Practical termination condition}]\label{re:last_iterate}
\revise{
In Algorithm~\ref{alg:A1}, Algorithm~\ref{alg:A2}, and their variants, we need to choose $\overline{x}_m$ or   $\overline{x}_T$ randomly among the iterate sequence generated up to the iteration $m$ or $T := S(m+1)$, respectively.
We can choose one of the two options:
\begin{compactitem}
\item[$\bullet$] 
We randomly generate an index $T_{*} \in \set{0,1,\cdots, m}$ (or $T_{*} \in \set{0,1,\cdots, T}$) using the probability distribution $\pb_t = \frac{\gamma_t}{\Sigma_t}$ (or $\pb_t = \frac{\gamma_t}{S\Sigma_t}$).
Then, we run Algorithm~\ref{alg:A1} or Algorithm~\ref{alg:A2} up to $T_{*}$ iterations.
The corresponding iterate at the $T_{*}$-th is $\overline{x}_m$ (or $\overline{x}_T$).

\item[$\bullet$]
We can choose the best-so-far iterate $\overline{x}_T$ based on the following guarantee:
\myeqn{
\min_{0\leq t\leq m}\norms{\Grad_{\eta}(x_t)} \leq \varepsilon ~~~\text{or}~~~\min_{0\leq t\leq m,1\leq s\leq S}\norms{\Grad_{\eta}(x_t^{(s)})} \leq \varepsilon.
}
\end{compactitem}
However, in practice, we often take the last iterate $x_m$ or $\overline{x}^{(S)}$ as the output of the algorithm which unfortunately does not have a theoretical guarantee in this paper.
}
\end{remark}

\beforesec
\section{Numerical Experiments}\label{sec:num_experiments}
\aftersec
In this section, we provide three examples to illustrate the performance of our algorithms and compare them with several  state-of-the-art methods.
We use different configurations of parameters to investigate the \nhan{practical} advantages and disadvantages of our methods.

\beforesubsec
\subsection{\bf Implementation details and configuration}\label{subsec:config}
\aftersubsec
\paragraph{\mytxtbi{Algorithms and competitors:}}
We implement the following variants of our algorithms:
\begin{compactitem}
\item[$\bullet$] 
Algorithm~\ref{alg:A1} with constant stepsizes stated in Theorem~\ref{th:constant_stepsize_convergence}. 
We denote it by \texttt{ProxHSGD-SL}.
The parameters are set as suggested by Theorem \ref{th:constant_stepsize_convergence}.
For the mini-batch variant stated in Theorem \ref{th:mini_batch_constant_stepsize_convergence}, we also fix \update{$\gamma := 0.95$}, and choose mini-batch sizes as suggested in Remark~\ref{re:batch_step_size_trade_off}.

\item[$\bullet$]
Algorithm~\ref{alg:A2} with constant and adaptive step-sizes as stated in Theorem~\ref{th:double_loop_convergence}.
We call these two variants by \texttt{ProxHSGD-RS1} for constant step-size, and \texttt{ProxHSGD-RS2} for adaptive step-size.
We set $\eta$ and $\beta$ as suggested by Theorem~\ref{th:double_loop_convergence} for constant step-size or by \eqref{eq:update_of_eta_t} for adaptive step-size.
For the mini-batch case, we choose \update{$\gamma := 0.95$ and compute the mini-batch accordingly as guided by Remark~\ref{re:batch_step_size_trade_off}.}
\end{compactitem}
We normalize datasets so that the average-Lipschitz constant $L$ in our experiment is $L = L_{\ell}$, the Lipschitz constant of the outer loss function $\ell$ specified in the sequel.
For comparison, we also implement the following algorithms from the literature:
\begin{compactitem}
\item[$\bullet$]
The proximal stochastic gradient methods, e.g., from \cite{ghadimi2016mini} with constant  and scheduled diminishing step-sizes $\eta := \eta_0 > 0$ and $\eta_t :=  \frac{\eta_0}{1 + \eta'\lfloor t/n\rfloor}$, respectively, where $\eta_0 > 0$ and $\eta' \geq 0$ that will be tuned in each experiment. 
We denote the SGD variant with constant step-size by \texttt{ProxSGD1} using $\eta' = 0$, and the SGD scheme with scheduled diminishing step-size by \texttt{ProxSGD2} using $\eta' > 0$.
\update{Without further specification, we will set $\eta' := 1.0$ and $\eta_0 := 0.01$ when the minibatch size $\hat{b} = 1$, whereas $\eta_0 := 0.05$ when $\hat{b} > 1$, which allows us  to obtain consistent performance.
But in many test cases below, we carefully tune these parameters to obtain the best results for fair comparison.}

\item[$\bullet$]
We also implement the proximal SpiderBoost method in \cite{wang2018spiderboost}, which works well in several examples, see \cite{Pham2019}.
Note that this algorithm can be viewed as an instance of ProxSARAH in \cite{Pham2019}, where we skip comparing with other variants here. 
We denote it by \texttt{ProxSpiderBoost}.
In this algorithm, we choose the constant step-size $\eta := \frac{1}{2L}$ and choose the optimal mini-batch size \update{$b := \lfloor\sqrt{n}\rfloor$ and epoch length $m := \lfloor\sqrt{n}\rfloor$ as suggested by the authors}.
This is a large constant step-size and it works really well in most cases.

\item[$\bullet$]
Another algorithm is the proximal SVRG scheme in \cite{li2018simple,reddi2016proximal} denoted by \texttt{ProxSVRG}.
\revise{While the learning rate $\eta$ suggested in \cite{reddi2016proximal} is $\eta := \frac{1}{3nL}$ for single sample and $\eta := \frac{1}{3L}$ for the mini-batch of size $b := \lfloor n^{2/3}\rfloor$, we find it not perform well in our experiments. 
Therefore, we also tune  \texttt{ProxSVRG} to find the best learning rate  in our experiments.}
\end{compactitem}
All the algorithms are implemented in Python running on a single node of a Linux server (called \textit{Longleaf}) with configuration: 3.40GHz Intel processors, 30M cache, and 256GB RAM.
For the last example, we also use \href{https://www.tensorflow.org}{\textbf{TensorFlow} (https://www.tensorflow.org)} to create networks and run simulation on a GPU system.
Since each algorithm uses different values of the step-size $\eta$, we  pick a fixed value $\eta := 0.5$ to compute the norm of gradient mapping $\Vert \Grad_{\eta}(x^{(s)}_t)\Vert$ for visualization and report in all methods.
We run the first and second examples up to $40$ epochs, whereas we increase up to $60$ epochs in the last example. 

\beforepara
\paragraph{\mytxtbi{Datasets:}}
Several datasets used in this paper are from \cite{CC01a}, which are available online at \href{https://www.csie.ntu.edu.tw/~cjlin/libsvm/}{https://www.csie.ntu.edu.tw/{$\sim$}cjlin/libsvm/}.
We use the following $6$ representative datasets:
\begin{compactitem}
\item[$\bullet$]
\textbf{Small and medium datasets:} 
Three different datasets:  \texttt{w8a} ($n = 49,749$, $p = 300$), \texttt{rcv1.binary} ($n=20242$, $p=47236$), and \texttt{real-sim} ($n=72309$, $p=20958$) are widely used in the literature. 

\item[$\bullet$]
\textbf{Large datasets:}
We also test the above algorithms on larger datasets: \texttt{url\_combined} ($n = 2,396,130; p = 3,231,961$), \texttt{epsilon} ($n = 400,000; p = 2,000$), and \texttt{news20.binary} ($n=19,996; p=1,355,191$). 
\end{compactitem}
Another well-known dataset is \texttt{mnist} \nhan{available at} \href{http://yann.lecun.com/exdb/mnist/}{http://yann.lecun.com/exdb/mnist/}.

\beforesubsec
\subsection{\bf Nonnegative principal component analysis}\label{subsec:exam1}
\aftersubsec
The first example is a non-negative principal component analysis (NN-PCA) model studied in \cite{reddi2016proximal}, which can be described as follows:
\myeq{eq:nn_pca}{
f^{\star} := \min_{x\in\R^p}\Big\{ f(x) := -\frac{1}{2n}\sum_{i=1}^nx^{\top}(z_iz_i^{\top})x ~~~\text{s.t.}~~~ \norms{x} \leq 1, ~x \geq 0 \Big\}.
}
Here, $\set{z_i}_{i=1}^n$ in $\R^p$ is a given set of samples.
By defining $f_i(x) := -\frac{1}{2}x^{\top}(z_iz_i^{\top})x$ for $i=1,\cdots, n$, and $\psi(x) := \delta_{\Xc}(x)$, the indicator of $\Xc := \set{x\in\R^p \mid \norms{x} \leq 1, x \geq 0}$, we can formulate \eqref{eq:nn_pca} into \eqref{eq:finite_sum}.
Moreover, since $z_i$ is normalized, the Lipschitz constant of $\nabla{f}_i$ is $L = 1$ for $i=1,\cdots, n$.

\beforepara
\paragraph{\mytxtbi{$\mathrm{(a)}$~Single-loop methods with single sample:}}
Our first experiment is to compare Algorithm~\ref{alg:A1} using constant step-sizes with the two different variants of SGD.
\revise{We use different values $c_1$ as suggested by Theorem~\ref{th:constant_stepsize_convergence} to form an initial mini-batch $\tilde{b}$.
More specifically, after some experiments, we set $c_1 := 10$ for \texttt{w8a}, $c_1 := 40$ for \texttt{rcv1\_train.binary}, and $c_1 := 50$ for \texttt{real-sim}.
To have a fair comparison, we also carefully tune the learning rate for both \texttt{ProxSGD1} and \texttt{ProxSGD2} to achieve their best performance.}
The performance of these methods is plotted in Figure~\ref{fig:NN_single_loop_new} for three different datasets.

\begin{figure}[htp!]
\begin{center}
\includegraphics[width = 1\textwidth]{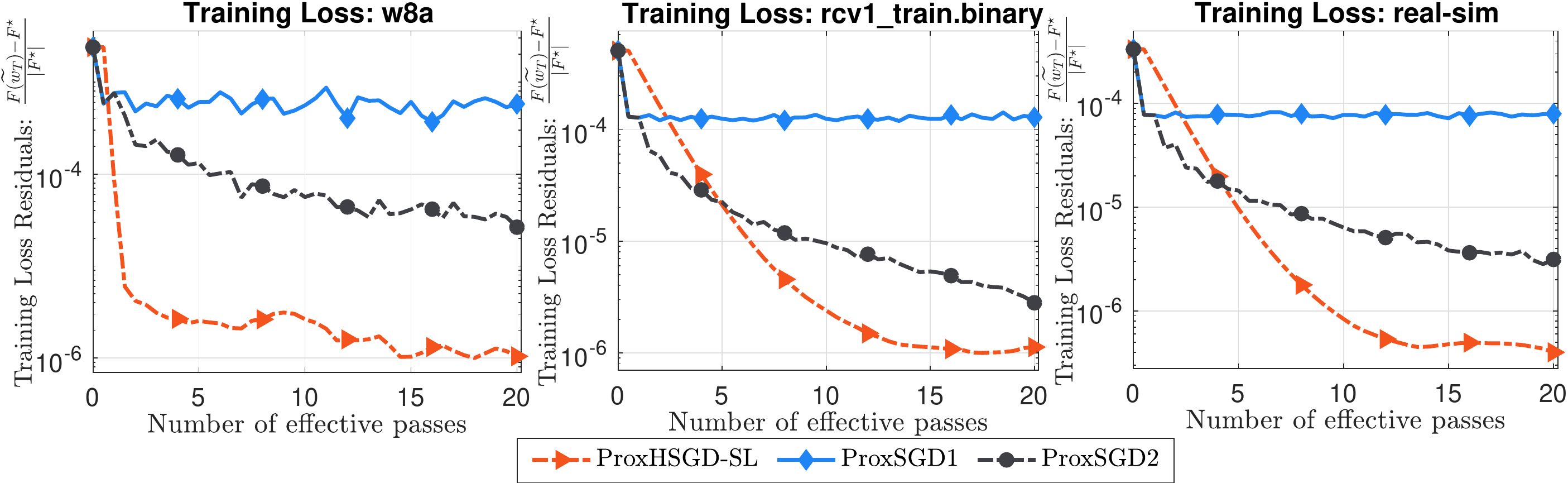}
\vspace{0.5ex}
\includegraphics[width = 1\textwidth]{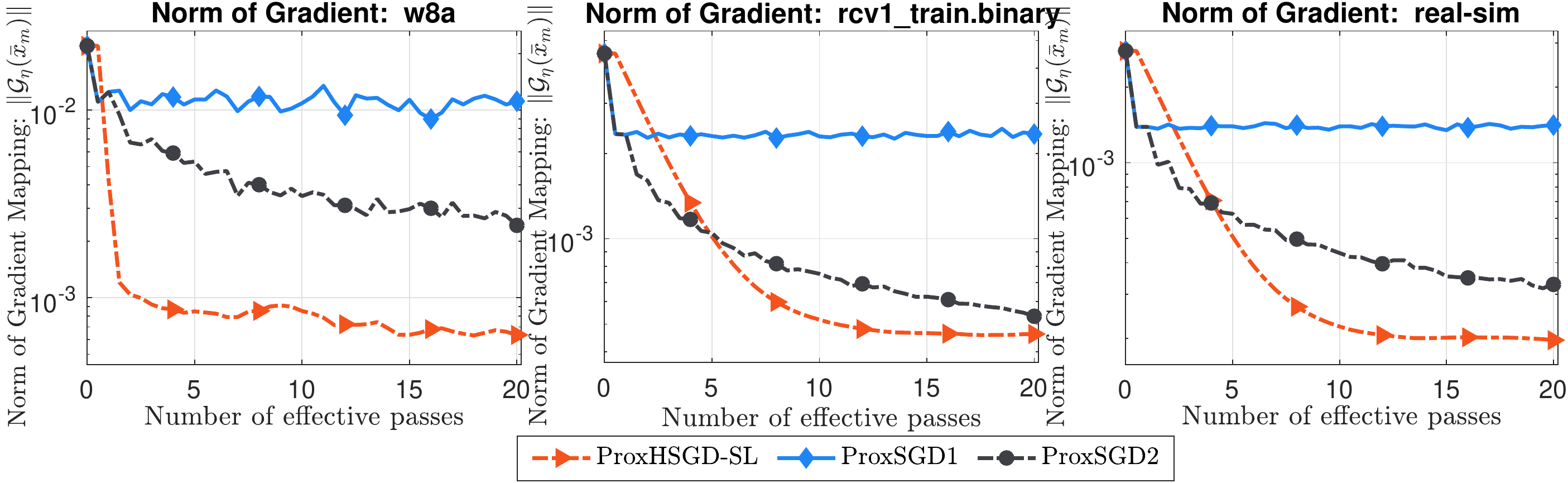}
\vspace{-2ex}
\caption{The relative training loss residuals and the absolute gradient mapping norms of \eqref{eq:nn_pca} on three small datasets: \mytxtbi{Single-loop with single-sample}.}\label{fig:NN_single_loop_new}
\end{center}
\vspace{-5ex}
\end{figure}

As we can observe from Figure~\ref{fig:NN_single_loop_new} that our \texttt{ProxHSGD-SL} variant works relatively well and outperforms both \texttt{ProxSGD1} and \texttt{ProxSGD2}.
However, it then slows down or is saturated at a certain value of the loss function, probably due to the effect of the SGD term $u_t$ in our estimator $v_t$.
It is not surprise that \texttt{ProxSGD2} works better than \texttt{ProxSGD1} due to the use of a scheduled diminishing learning rate. 

\beforepara
\paragraph{\mytxtbi{$\mathrm{(b)}$~Single-loop methods with mini-batch:}}
Next, we test \texttt{ProxHSGD-SL},  \texttt{ProxSGD1}, and  \texttt{ProxSGD2} with mini-batches on six different datasets.
We use the mini-batch size $\hat{b} := 50$ for \texttt{w8a}, \texttt{rcv1-binary}, \texttt{real-sim}, and \texttt{news20.binary}, $\hat{b} := 300$ for \texttt{epsilon}, and $\hat{b} := 500$ for \texttt{url\_combined}. 
For \texttt{ProxSpiderBoost} and \texttt{ProxSVRG}, we set the mini-batch sizes as stated in Subsection~\ref{subsec:config}. 

\revise{
For three small datasets, as suggested by Theorem~\ref{th:mini_batch_constant_stepsize_convergence}, after some simple experiments, we find that $c_0 := 9$ and $c_1 := 10$ for \texttt{w8a}, $c_0 := 18$ and $c_1 := 9$  for \texttt{rcv1\_train.binary}, and $c_0 := 30$ and $c_1 := 15$ for \texttt{real-sim} work well.
While we choose the same mini-batch size in \texttt{ProxSGD1}, and  \texttt{ProxSGD2} as described above, we again tune their learning rates to have the best performance.
}
The results are shown in Figure~\ref{fig:NN_single_loop_mini_batch_new}.

\begin{figure}[htp!]
\begin{center}
\includegraphics[width = 1\textwidth]{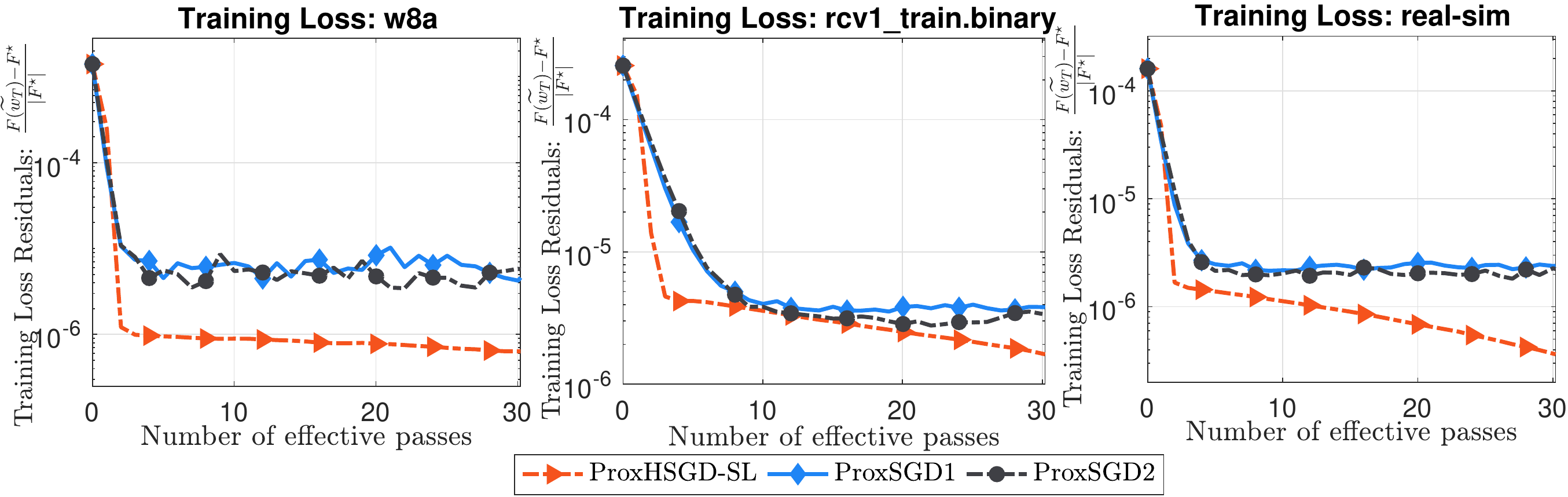}
\vspace{0.5ex}
\includegraphics[width = 1\textwidth]{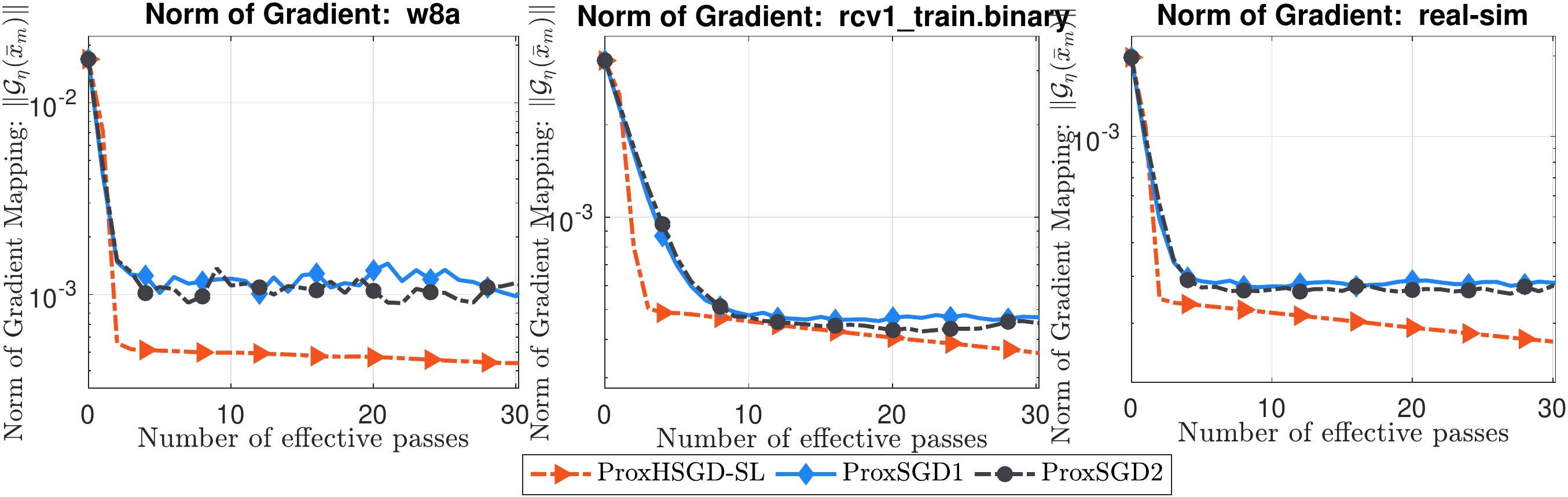}
\vspace{-2ex}
\caption{The relative training loss residuals and the absolute gradient mapping norms of \eqref{eq:nn_pca} on three small datasets: \mytxtbi{Single-loop  with mini-batch}.}
\label{fig:NN_single_loop_mini_batch_new}
\end{center}
\vspace{-5ex}
\end{figure}

In this case, both \texttt{ProxSGD1} and \texttt{ProxSGD2} perform much better  than the single sample case, and are comparable with \texttt{ProxHSGD-SL}.
However, \texttt{ProxHSGD-SL} still outperforms \texttt{ProxSGD1} and \texttt{ProxSGD2} in the first and the last datasets, while it is slightly better than  \texttt{ProxSGD1} and \texttt{ProxSGD2} in the second one.

Finally, we again evaluate three single-loop algorithms with mini-batches on three larger datasets: \texttt{url\_combined}, \texttt{epsilon}, and \texttt{news20.binary}.
Here, based on Theorem~\ref{th:mini_batch_constant_stepsize_convergence}, we find that $c_0 := 40\times 10^3$ and $c_1 := 20$ for \texttt{url\_combined}, $c_0 := 15\times 10^3$ and $c_1 := 30$ for \texttt{epsilon}, and 
$c_0 := 10\times 10^3$ and $c_1 := 20$ for \texttt{news20.binary} work well for our method.
Figure~\ref{fig:NN_single_loop_mini_batch2} shows the convergence behavior of three algorithms.

\begin{figure}[htp!]
\begin{center}
\includegraphics[width = 1\textwidth]{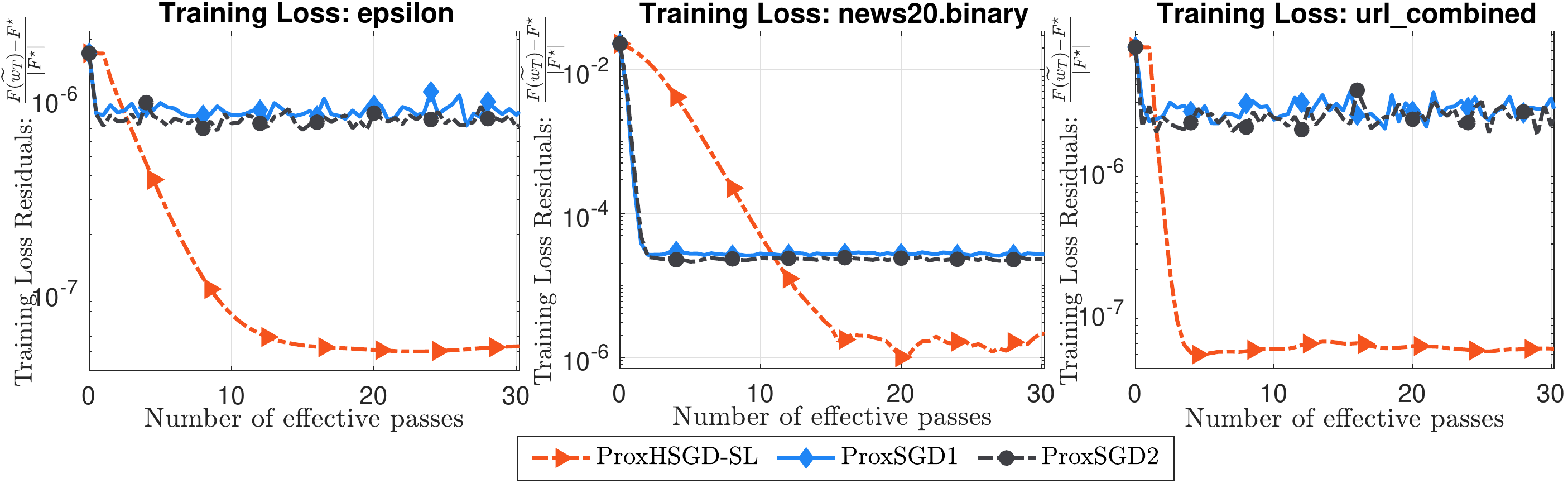}
\vspace{0.5ex}
\includegraphics[width = 1\textwidth]{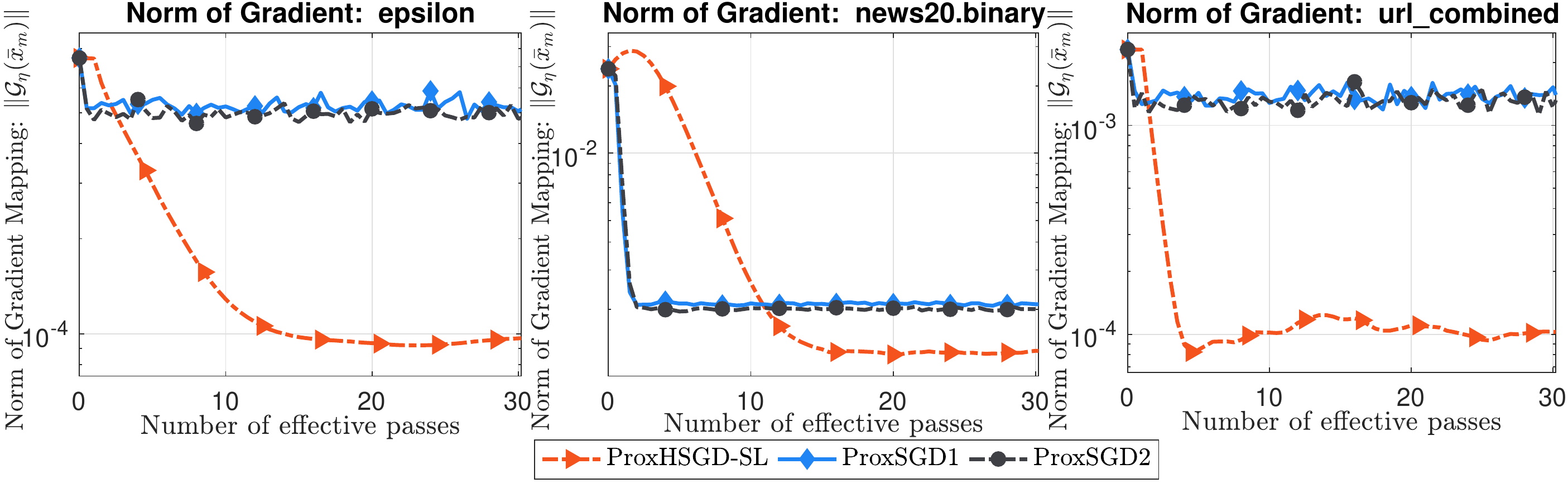}
\vspace{-3ex}
\caption{The relative training loss residuals and the absolute gradient mapping norms of \eqref{eq:nn_pca} on three large datasets: \mytxtbi{Single-loop  with mini-batch}.}
\label{fig:NN_single_loop_mini_batch2}
\end{center}
\vspace{-1ex}
\end{figure}

We obverse that  from Figure~\ref{fig:NN_single_loop_mini_batch2}  \texttt{ProxHSGD-SL} performs much better than \texttt{ProxSGD1} and \texttt{ProxSGD2}, especially in the first and the third datasets. 
From the experiments (a) and (b), we believe that ProxHSGD-SL generally outperforms  \texttt{ProxSGD} in these tests.

\beforepara
\paragraph{\mytxtbi{$\mathrm{(c)}$~Double-loop methods with mini-batch:}}
Our next test is on double-loop methods.
We compare two different double-loop methods: \texttt{ProxSVRG} and \texttt{ProxSpiderBoost} with two restarting variants of Algorithm~\ref{alg:A2}:  \texttt{ProxHSGD-RS1} and  \texttt{ProxHSGD-RS2}.
We use mini-batch for this test since  \texttt{ProxSpiderBoost} only has guarantee  for mini-batch variants.

First, let us test there algorithms using recommended learning rates and mini-batch sizes that have convergence guarantee.
Note that  both \texttt{ProxSpiderBoost} and \texttt{ProxSVRG} use a large learning rate $\eta := \frac{1}{2L}$ and $\eta := \frac{1}{3L}$, respectively. 
As observed in \cite{Pham2019}, these learning rates work well.
We use our step-sizes and mini-batch sizes as suggested in Remark~\ref{re:batch_step_size_trade_off}.
\revise{To have a fair assessment, we also add a tuning variant (\texttt{Tuned}) of both  \texttt{ProxSpiderBoost} and \texttt{ProxSVRG}, where we heuristically tune their learning rate to get the best performance.}
The results of this test are reported in Figure~\ref{fig:NN_double_loop2_notune}.

\begin{figure}[hpt!]
\vspace{-2ex}
\begin{center}
\includegraphics[width = 1\textwidth]{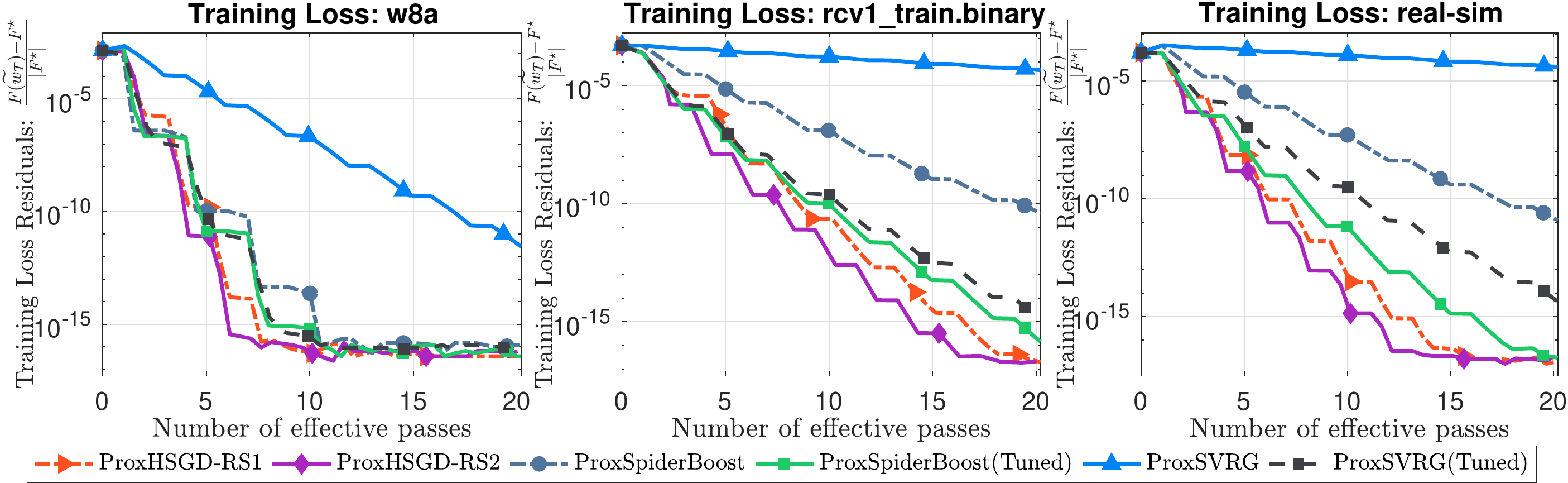}
\vspace{0.5ex}
\includegraphics[width = 1\textwidth]{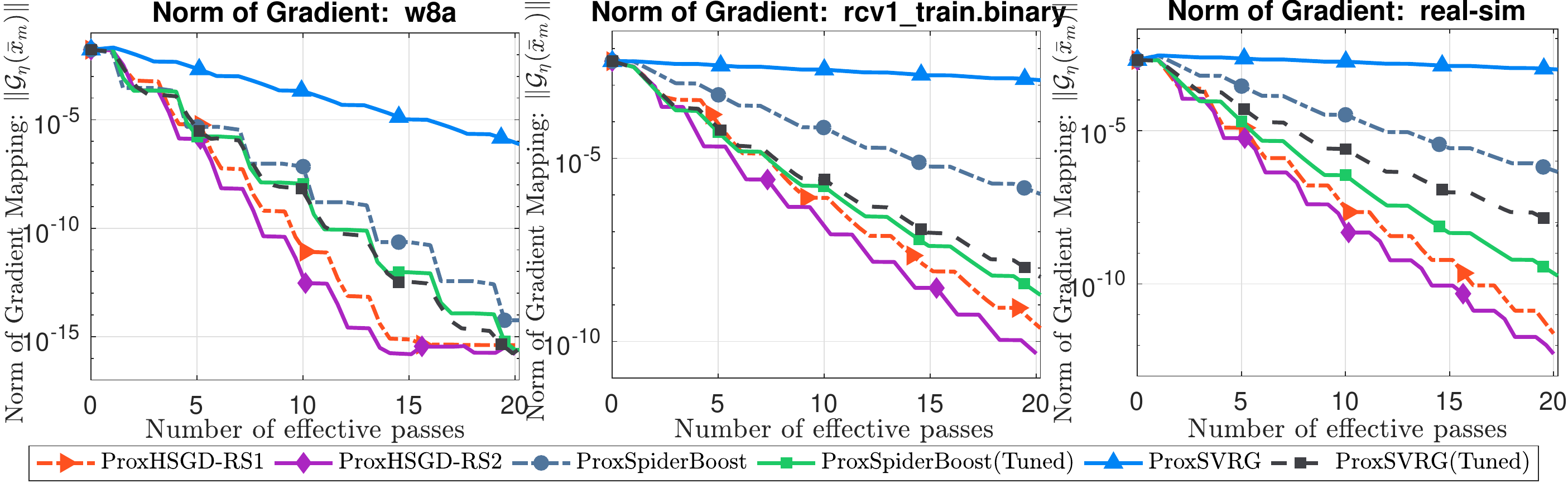}
\vspace{-3ex}
\caption{The relative training loss residuals and the absolute gradient mapping norms of \eqref{eq:nn_pca} on three small datasets using both recommended (theoretical) and tuned step-sizes: \mytxtbi{Double-loop  with mini-batch}.}
\label{fig:NN_double_loop2_notune}
\end{center}
\vspace{-4ex}
\end{figure}

As we can see from Figure~\ref{fig:NN_double_loop2_notune} that  both \texttt{ProxHSGD-RS1} and \texttt{ProxHSGD-RS2} highly outperform \texttt{ProxSVRG} and \texttt{ProxSpiderBoost}, especially in the last two datasets.
Due to the use of adaptive step-size, \texttt{ProxHSGD-RS2} appears to be better than \texttt{ProxHSGD-RS1}.
\revise{For the tuned learning rates, both  \texttt{ProxSVRG(Tuned)} and  \texttt{ProxSpiderBoost(Tuned)} are relatively comparable with \texttt{ProxHSGD-RS1} and \texttt{ProxHSGD-RS2} on these datasets.
More precisely, \texttt{ProxHSGD-RS1} and \texttt{ProxHSGD-RS2} are slightly better than both  \texttt{ProxSVRG(Tuned)} and  \texttt{ProxSpiderBoost(Tuned)}, especially in the last dataset.
But, \texttt{ProxSpiderBoost(Tuned)} is still slightly better than \texttt{ProxSVRG(Tuned)} in the second and third datasets.}

Now, we test  these variants on three larger datasets: \texttt{url\_combined}, \texttt{epsilon}, and \texttt{news20.binary}. 
Their performance is shown in Figure~\ref{fig:NN_double_loop_mini_batch2}.

\begin{figure}[htp!]
\begin{center}
\includegraphics[width = 1\textwidth]{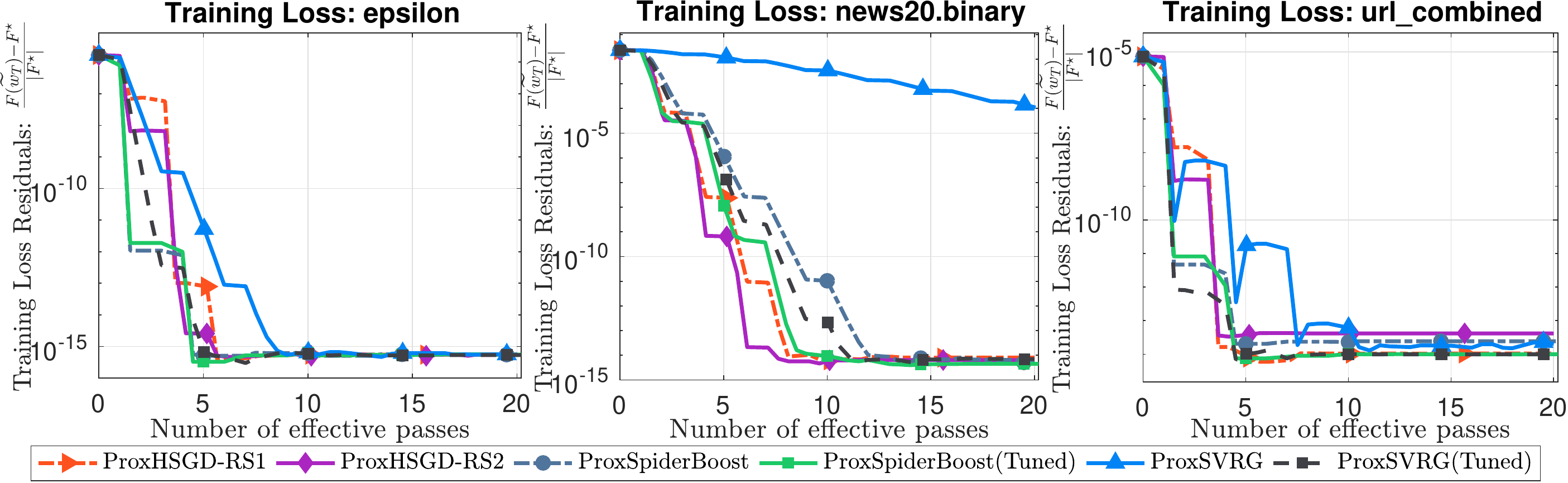}
\vspace{0.5ex}
\includegraphics[width = 1\textwidth]{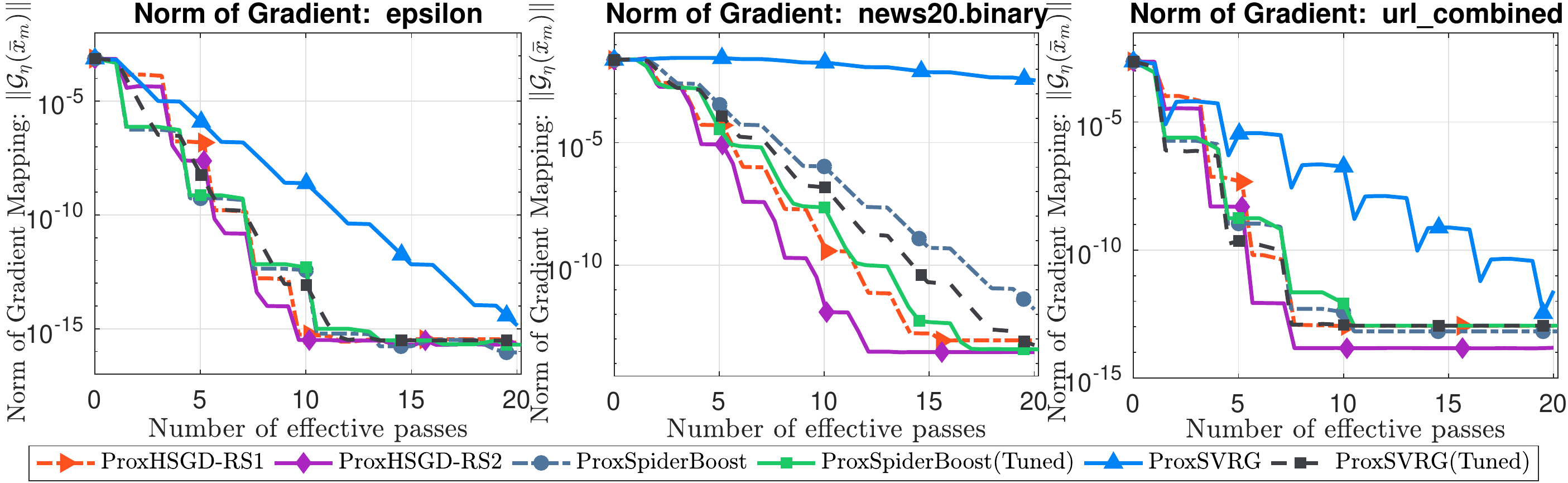}
\vspace{-3ex}
\caption{The relative training loss residuals and the absolute gradient mapping norms of \eqref{eq:nn_pca} on three large datasets: \mytxtbi{Double-loop  with mini-batch}.}
\label{fig:NN_double_loop_mini_batch2}
\end{center}
\vspace{-4ex}
\end{figure}

\revise{
Without tuning learning rates, we observe similar behavior as in Figure~\ref{fig:NN_double_loop_mini_batch2}, where our methods, \texttt{ProxHSGD-RS1} and \texttt{ProxHSGD-RS2}, highly outperform  \texttt{ProxSVRG} and are comparable with   \texttt{ProxSpiderBoost}.
Again, with tuned learning rates, \texttt{ProxSVRG(Tuned)} and \texttt{ProxSpiderBoost(Tuned)}  are more comparable with \texttt{ProxHSGD-RS1} and \texttt{ProxHSGD-RS2}, but our methods are still slightly better than their competitors in the second dataset.
However, we do not observe significant difference between the the adaptive step-size variant, \texttt{ProxHSGD-RS2} and the constant one, \texttt{ProxHSGD-RS1}, in this test.}

\beforesubsec 
\subsection{\bf Binary classification with nonconvex models}
\aftersubsec
In this example, we consider the following binary classification model involving a nonconvex objective function and a convex regularizer broadly studied in the literature:
\vspace{-0.5ex}
\myeq{eq:sparse_BinClass_ncvx}{
\min_{x\in\R^p}\set{ F(x) := \frac{1}{n}\sum_{i=1}^n\ell(a_i^{\top}x, b_i) + \psi(x)},
\vspace{-0.5ex}
}
where $\set{(a_i, b_i)}_{i=1}^n \subset\R^p\times \set{-1,1}^n$ is a given training dataset, $\psi$ is a convex regularizer, and $\ell(\cdot,\cdot)$ is a given smooth and nonconvex loss function.
By setting $f_i(w) := \ell(a_i^{\top}w, b_i)$ and  choosing a convex regularizer $\psi$, we obtain the form \eqref{eq:finite_sum} that satisfies Assumptions~\ref{as:A0} and \ref{as:A1}.

We consider the following settings for the choice of $\ell$ and $\psi$, where \nhan{the first} three models were studied in \cite{zhao2010convex}, and the last one has been used in \cite{metel2019simple}:
\begin{compactitem}
\item[$\diamond$]
\textbf{Normalized sigmoid loss:}
$\ell_1(s, \tau) := 1 - \tanh(\tau s)$ for a given $\omega > 0$ and $\psi(x) := \lambda\norms{x}_1$.
Here, $\ell_1(\cdot,\tau)$ is $L$-smooth with respect to $s$, where $L : \approx 0.7698$.

\item[$\diamond$]
 \textbf{Nonconvex loss in 2-layer neural networks:}
$\ell_2(s, \tau) := \left(1 - \frac{1}{1 + \exp(-\tau s)}\right)^2$ and $\psi(x) := \lambda\norms{x}_1$.
This function is also $L$-smooth with $L = 0.15405$.

\item[$\diamond$]
\textbf{Logistic difference loss:} $\ell_3(s,\tau) := \ln(1 + \exp(-\tau s)) - \ln(1 + \exp(-\tau s - 1))$ and $\psi(x) := \lambda\norms{x}_1$.
This function is $L$-smooth with $L = 0.092372$.

\item[$\diamond$]
\textbf{Lorenz loss:}
$\ell_4(s,\tau) := \ln(1 +(\tau s - 1)^2)$ if $\tau s \le 1$, and $\ell_4(s,\tau) = 0$, otherwise, and $\psi(x) = \lambda \|x\|_1$. 
This function is $L$-smooth with $L = 4$.
\end{compactitem}
We set the regularization parameter $\lambda := \frac{1}{n}$ in all the tests which gives us relatively sparse solutions.
We test the above algorithms on different scenarios ranging from small to large datasets using different algorithms.

\beforepara
\paragraph{\mytxtbi{$(a)$~Single-loop schemes with single-sample:}}
Our first experiment for \eqref{eq:sparse_BinClass_ncvx} is on single-loop variants with single sample.
For this test, we only choose the losses $\ell_3$ and $\ell_4$ with two well-known datasets: \texttt{rcv1-binary} and \texttt{real-sim} to avoid overloading the paper.
\revise{The result of our test on the $\ell_3$-loss is shown in Figures~\ref{fig:binary_loss3_single_loop_single_sample} for three algorithms using the same setting as in Subsection \ref{subsec:exam1}, where we attempt to tune  the learning rates for three competitors: \texttt{ProxSGD1}, \texttt{ProxSGD2}, and \texttt{ProxSVRG}.}

\begin{figure}[hpt!]
\vspace{-2ex}
\begin{center}
\includegraphics[width = 1\textwidth]{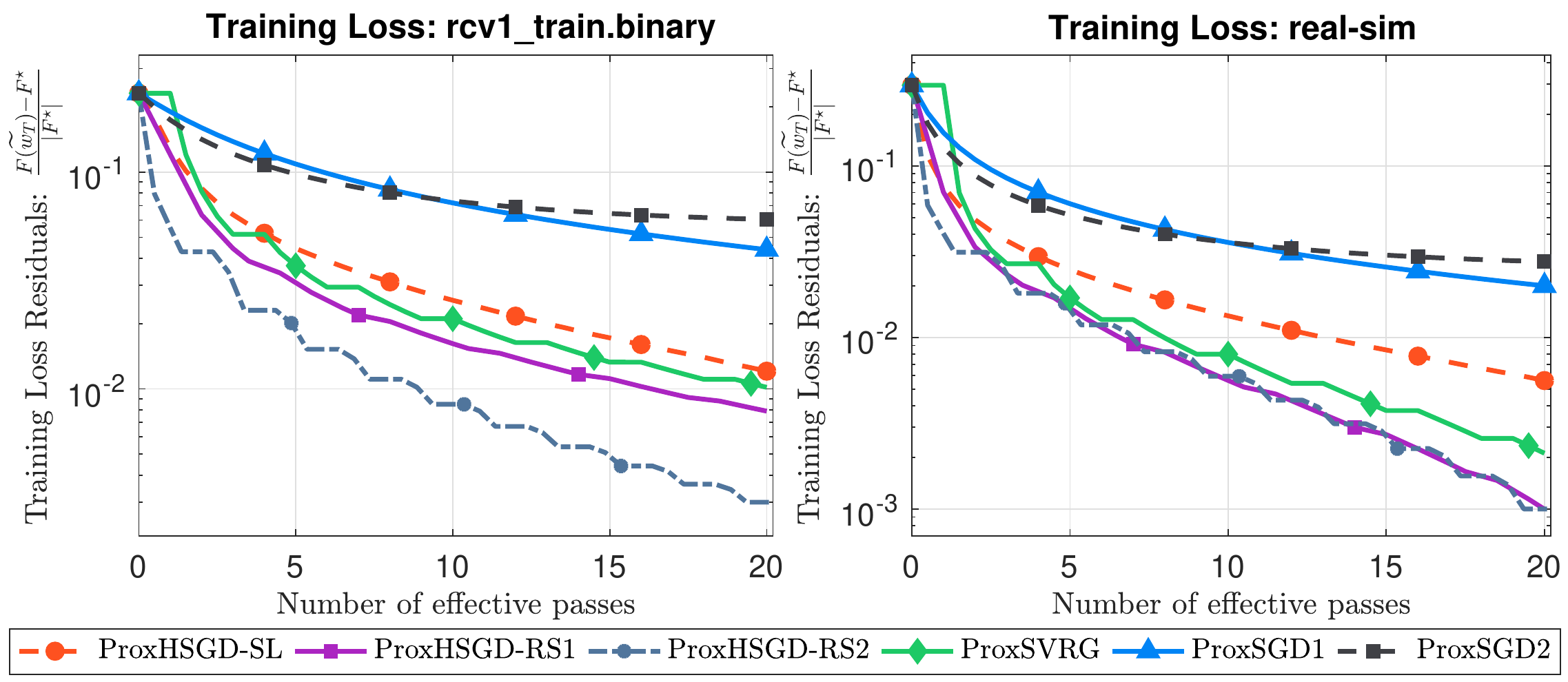}
\vspace{0.5ex}
\includegraphics[width = 1\textwidth]{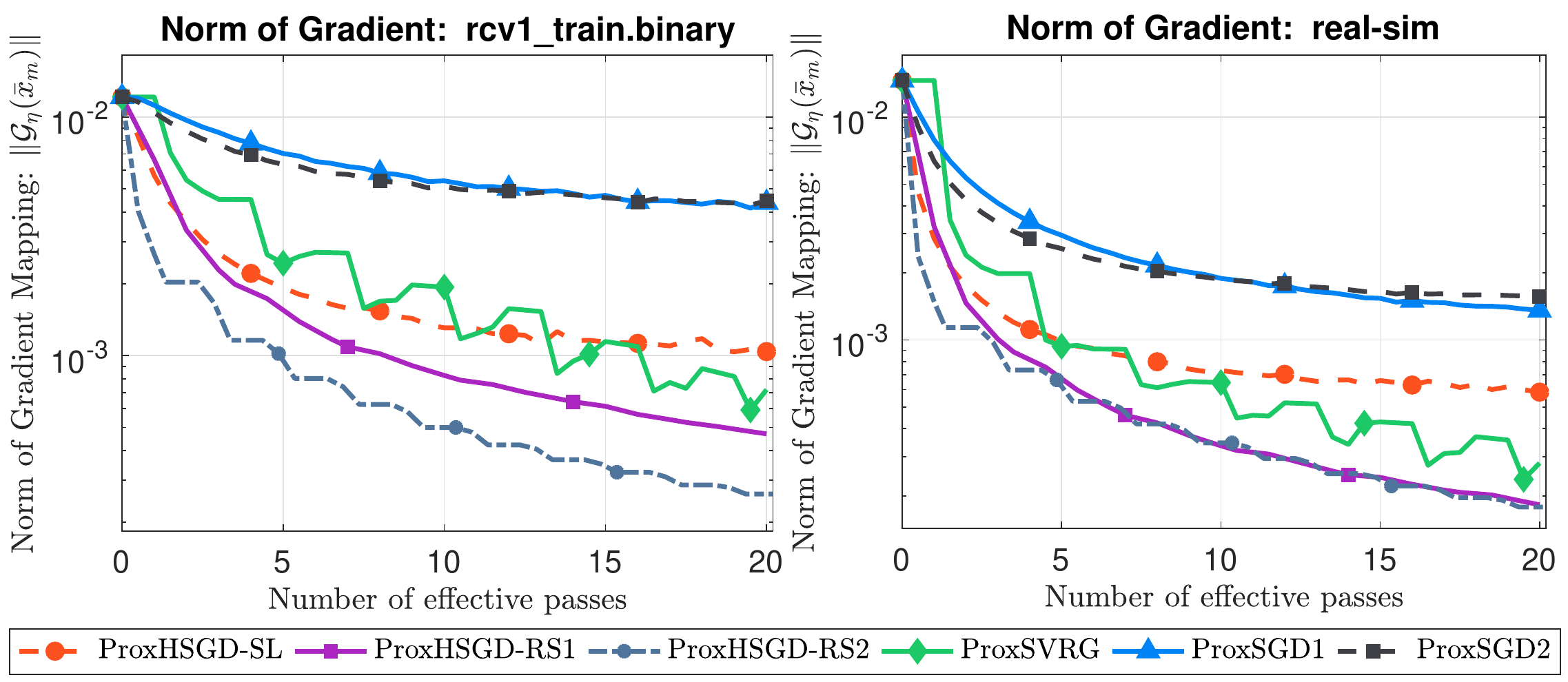}
\vspace{-2ex}
\caption{The relative training loss residuals and the absolute gradient mapping norms of \eqref{eq:sparse_BinClass_ncvx} with the nonconvex training loss $\ell_3$ on the two datasets: \mytxtbi{Single-loop  with single-sample}.}\label{fig:binary_loss3_single_loop_single_sample}
\end{center}
\vspace{-5ex}
\end{figure}

\revise{For the loss $\ell_3$ with $20$ epochs, \texttt{ProxSGD1} and \texttt{ProxSGD2} show their less competitive  performance than our methods and \texttt{ProxSVRG}.
While \texttt{ProxSGD2} is not better than \texttt{ProxSGD1} as observed in the previous example, \texttt{ProxSVRG} with tuned learning rate works really well and beats our \texttt{ProxHSGD-SL}.
The restarting variants of our methods highly outperform \texttt{ProxSGD1} and \texttt{ProxSGD2} and is slightly better than \texttt{ProxSVRG}.}

Additionally, the results of these six algorithms on the $\ell_4$-loss using the same setting are shown in Figure~\ref{fig:binary_loss4_single_loop_single1}.
\begin{figure}[hpt!]
\vspace{-2ex}
\begin{center}
\includegraphics[width = 1\textwidth]{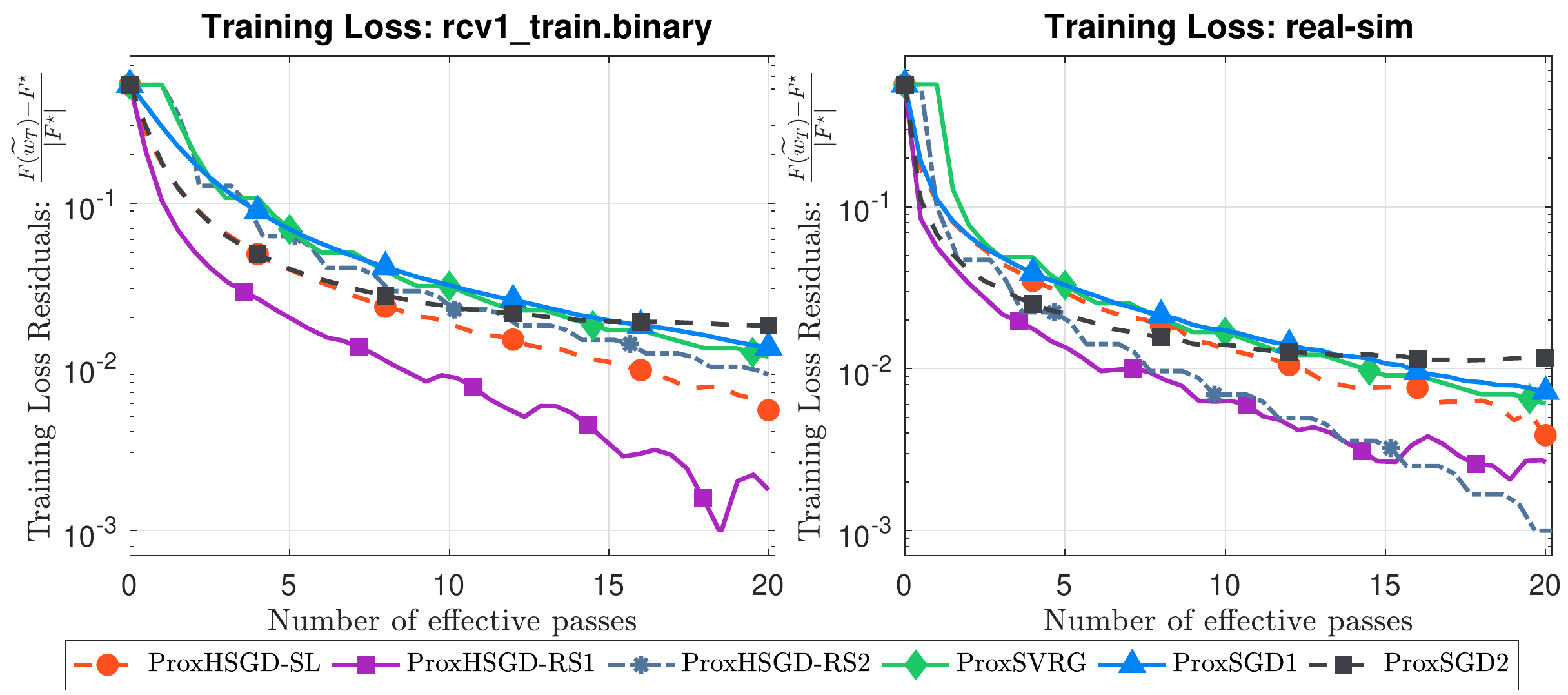}
\vspace{0.5ex}
\includegraphics[width = 1\textwidth]{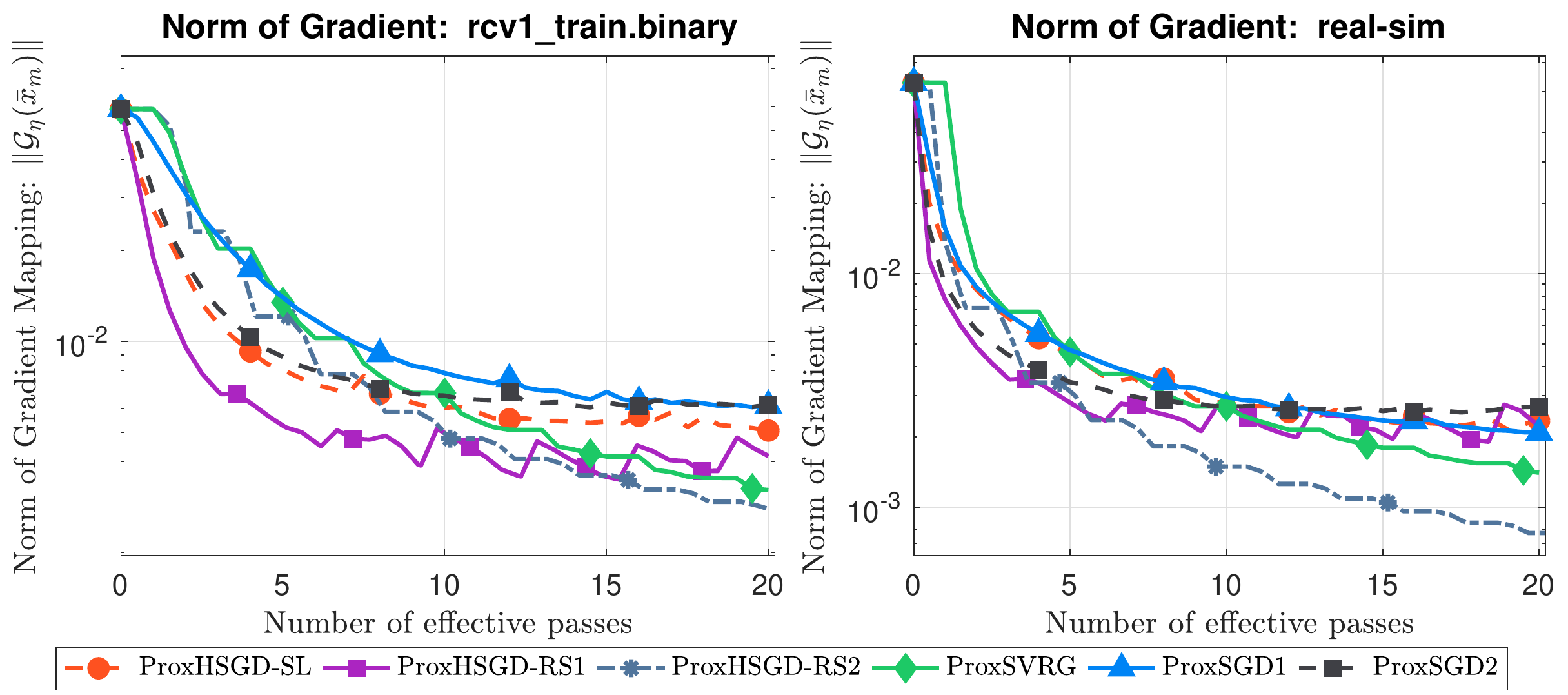}
\vspace{-2ex}
\caption{The relative training loss residuals and the absolute gradient mapping norms of \eqref{eq:sparse_BinClass_ncvx} with the nonconvex training loss $\ell_4$ on the two datasets: \mytxtbi{Single-loop  with single-sample}.}\label{fig:binary_loss4_single_loop_single1}
\end{center}
\vspace{-5ex}
\end{figure}

\revise{Figure~\ref{fig:binary_loss4_single_loop_single1} shows improved performance of both \texttt{ProxSGD1}, and \texttt{ProxSGD2}.
In this case, these methods are comparable with our \texttt{ProxHSGD-SL} and \texttt{ProxSVRG}.
However, our restarting variants,  \texttt{ProxHSGD-RS1} and  \texttt{ProxHSGD-RS2} are still slightly better than their competitors.
}

\beforepara
\paragraph{\mytxtbi{$\mathrm{(b)}$~\nhan{Complete test} on the mini-batch case:}}
Finally, we carry out a \nhan{more thorough} test on four different losses using two small datasets and two large datasets.
We compare five different methods as shown in Tables~\ref{tbl:small_datasets} and \ref{tbl:large_datasets}.
We report the relative training loss residuals, the absolute norms of gradient mapping, the training accuracy, and the test accuracy.
\revise{Since \texttt{ProxSVRG} with the theoretical learning rate $1/3L$ performs quite poorly, we carry out a grid search between $[1/(15L), 5/(3L)]$ to find a good learning rate for this experiment.}

Table~\ref{tbl:small_datasets} reports the results on the two small datasets: \texttt{rcv1\_train.binary} and \texttt{real-sim} after $20$ and $40$ epochs, respectively.

\begin{table}[hpt!]
\newcommand{\cellbest}[1]{{\!\!}{\color{blue}\textbf{#1}}{\!\!}}
\newcommand{\cellbetter}[1]{{\!\!}{\color{blue}#1}{\!\!}}
\newcommand{\cellr}[1]{{}{\color{red}#1}{}}
\newcommand{\cell}[1]{{\!\!}#1{\!\!}}
\begin{center}
\resizebox{\textwidth}{!}{
\begin{tabular}{|l||c|c||c|c||c|c||c|c||}
\hline
 \multicolumn{9}{|c|}{The loss function $\ell_1$}\\\hline
& \multicolumn{2}{|c|}{Training Loss Residual}& \multicolumn{2}{|c||}{$\norms{G_{\eta}(w_T)}$}& \multicolumn{2}{|c|}{Training Accuracy}& \multicolumn{2}{|c|}{Test Accuracy}\\\cline{2-9}
& \cell{20th ep.} & \cell{40th ep.} & \cell{20th ep.} & \cell{40th ep.} &  \cell{20th ep.} & \cell{40th ep.} & \cell{20th ep.} & \cell{40th ep.}\\\hline
\multirow{1}{*}{\text{~~~Algorithms}} & \multicolumn{8}{|c||}{\textbf{rcv1\_train.binary ($\boldsymbol{n=20,242,p=47,236}$)}}\\\hline
\cell{\textbf{ProxHSGD-SL}}  & \cellbetter{8.964e-02} & \cellbetter{3.409e-02} & \cellbetter{9.806e-05} & \cellbetter{3.043e-05} & \cellbetter{0.949} & \cellbetter{0.954} & \cellbetter{0.938} & \cellbetter{0.947}\\\hline
\cell{\textbf{ProxHSGD-RS1}}  & \cellbest{3.478e-02} & \cellbest{1.888e-04} & \cellbest{3.541e-05} & \cellbest{1.586e-05} & \cellbest{0.955} & \cellbest{0.960} & \cellbest{0.947} & \cellbest{0.956}\\\hline
\cell{ProxSpiderBoost}  & \cell{1.649e-01} & \cell{8.281e-02} & \cell{2.664e-04} & \cell{8.734e-05} & \cell{0.944} & \cell{0.950} & \cell{0.934} & \cell{0.938}\\\hline
\cell{ProxSVRG}  & \cell{2.918e-01} & \cell{1.574e-01} & \cell{1.578e-02} & \cell{1.212e-02} & \cell{0.626} & \cell{0.825} & \cell{0.004} & \cell{0.561}\\\hline
\cell{ProxSGD2}  & \cell{1.505e-01} & \cell{7.110e-02} & \cell{2.446e-04} & \cell{8.607e-05} & \cell{0.945} & \cell{0.951} & \cell{0.936} & \cell{0.941}\\\hline 
\multirow{1}{*}{\text{~~~Algorithms}} & \multicolumn{8}{|c||}{\textbf{\textbf{real-sim ($\boldsymbol{n=72,309,p=20,958}$)}}}\\\hline
\cell{\textbf{ProxHSGD-SL}}  & \cellbetter{4.554e-02} & \cellbetter{1.742e-02} & \cellbetter{1.377e-05} & \cellbetter{4.317e-06} & \cellbetter{0.977} & \cellbetter{0.981} & \cell{0.659} & \cell{0.647}\\\hline
\cell{\textbf{ProxHSGD-RS1}}  & \cellbest{1.756e-02} & \cellbest{1.006e-04} & \cellbest{4.699e-06} & \cellbest{1.837e-06} & \cellbest{0.981} & \cellbest{0.984} & \cell{0.646} & \cell{0.634}\\\hline
\cell{ProxSpiderBoost}  & \cell{1.459e-01} & \cell{8.004e-02} & \cell{1.124e-04} & \cell{3.537e-05} & \cell{0.966} & \cell{0.973} & \cellbetter{0.695} & \cellbetter{0.670}\\\hline
\cell{ProxSVRG}  & \cell{4.150e-02} & \cell{2.443e-02} & \cell{3.377e-03} & \cell{1.783e-03} & \cell{0.962} & \cell{0.964} &  \cellbest{0.963} & \cellbest{0.964}\\\hline
\cell{ProxSGD2}  & \cell{7.619e-02} & \cell{3.613e-02} & \cell{3.333e-05} & \cell{1.087e-05} & \cell{0.974} & \cell{0.978} & \cell{0.669} & \cell{0.653}\\\hline\hline
\multicolumn{9}{|c|}{The loss function $\ell_2$}\\\hline
& \multicolumn{2}{|c|}{Training Loss Residual}& \multicolumn{2}{|c|}{$\norms{G_{\eta}(w_T)}$}& \multicolumn{2}{|c|}{Training Accuracy}& \multicolumn{2}{|c|}{Test Accuracy}\\\cline{2-9}
& \cell{20th ep.} & \cell{40th ep.} & \cell{20th ep.} & \cell{40th ep.} &  \cell{20th ep.} & \cell{40th ep.} & \cell{20th ep.} & \cell{40th ep.}\\\hline
\multirow{1}{*}{\text{~~~Algorithms}} & \multicolumn{8}{|c||}{\textbf{rcv1\_train.binary ($\boldsymbol{n=20,242,p=47,236}$)}}\\\hline
\cell{\textbf{ProxHSGD-SL}}  & \cellbest{9.319e-03} & \cellbest{1.229e-04} & \cellbest{1.701e-03} & \cellbest{9.506e-04} & \cellbest{0.944} & \cellbest{0.949} & \cellbest{0.937} & \cellbest{0.943}\\\hline
\cell{\textbf{ProxHSGD-RS1}}  & \cellbetter{1.294e-02} & \cellbetter{2.546e-03} & \cellbetter{1.999e-03} & \cellbetter{1.107e-03} & \cellbest{0.944} & \cellbetter{0.948} & \cellbetter{0.936} & \cellbetter{0.941}\\\hline
\cell{ProxSpiderBoost}  & \cell{2.593e-02} & \cell{1.072e-02} & \cell{3.146e-03} & \cell{1.793e-03} & \cell{0.940} & \cell{0.944} & \cell{0.931} & \cell{0.936}\\\hline
\cell{ProxSVRG}  & \cell{2.927e-02} & \cell{1.232e-02} & \cell{3.445e-03} & \cell{1.934e-03} & \cell{0.939} & \cell{0.944} & \cell{0.929} & \cell{0.934}\\\hline
\cell{ProxSGD2}  & \cell{4.545e-02} & \cell{2.508e-02} & \cell{6.103e-03} & \cell{4.481e-03} & \cell{0.935} & \cell{0.940} & \cell{0.926} & \cell{0.930}\\\hline 
\multirow{1}{*}{\text{~~~Algorithms}} & \multicolumn{8}{|c||}{\textbf{\textbf{real-sim ($\boldsymbol{n=72,309,p=20,958}$)}}}\\\hline
\cell{\textbf{ProxHSGD-SL}}  & \cellbest{8.850e-03} & \cellbest{1.103e-04} & \cellbest{1.339e-03} & \cellbest{7.503e-04} & \cellbest{0.968} & \cellbest{0.971} & \cell{0.665} & \cell{0.650}\\\hline
\cell{\textbf{ProxHSGD-RS1}}  & \cellbetter{1.294e-02} & \cellbetter{2.787e-03} & \cellbetter{1.626e-03} & \cellbetter{9.099e-04} & \cellbetter{0.966} & \cellbetter{0.970} & \cell{0.673} & \cell{0.655}\\\hline
\cell{ProxSpiderBoost}  & \cell{2.003e-02} & \cell{6.730e-03} & \cell{2.156e-03} & \cell{1.179e-03} & \cell{0.963} & \cell{0.969} & \cell{0.686} & \cell{0.662}\\\hline
\cell{ProxSVRG}  & \cell{2.644e-02} & \cell{1.150e-02} & \cell{2.654e-03} & \cell{1.521e-03} & \cell{0.960} & \cell{0.967} & \cellbetter{0.697} & \cellbetter{0.670}\\\hline
\cell{ProxSGD2}  & \cell{4.401e-02} & \cell{2.249e-02} & \cell{4.208e-03} & \cell{2.562e-03} & \cell{0.949} & \cell{0.962} & \cellbest{0.726} & \cellbest{0.690}\\\hline\hline
\multicolumn{9}{|c|}{The loss function $\ell_3$}\\\hline
& \multicolumn{2}{|c|}{Training Loss Residual}& \multicolumn{2}{|c|}{$\norms{G_{\eta}(w_T)}$}& \multicolumn{2}{|c|}{Training Accuracy}& \multicolumn{2}{|c|}{Test Accuracy}\\\cline{2-9}
& \cell{20th ep.} & \cell{40th ep.} & \cell{20th ep.} & \cell{40th ep.} &  \cell{20th ep.} & \cell{40th ep.} & \cell{20th ep.} & \cell{40th ep.}\\\hline
\multirow{1}{*}{\text{~~~Algorithms}} & \multicolumn{8}{|c||}{\textbf{rcv1\_train.binary ($\boldsymbol{n=20,242,p=47,236}$)}}\\\hline
\cell{\textbf{ProxHSGD-SL}}  & \cell{5.112e-02} & \cell{1.915e-02} & \cell{4.465e-03} & \cell{2.663e-03} & \cell{0.934} & \cell{0.940} & \cell{0.924} & \cell{0.929}\\\hline
\cell{\textbf{ProxHSGD-RS1}}  & \cellbest{2.224e-02} & \cellbest{6.492e-06} & \cellbest{2.842e-03} & \cellbest{1.626e-03} & \cellbest{0.939} & \cellbest{0.943} & \cellbest{0.929} & \cellbest{0.936}\\\hline
\cell{ProxSpiderBoost}  & \cellbetter{2.850e-02} & \cellbetter{4.161e-03} & \cellbetter{3.188e-03} & \cellbetter{1.842e-03} & \cellbetter{0.938} & \cellbest{0.943} & \cellbetter{0.928} & \cellbetter{0.935}\\\hline
\cell{ProxSVRG}  & \cell{2.797e-02} & \cell{2.943e-03} & \cell{3.157e-03} & \cell{1.775e-03} & \cell{0.938} & \cell{0.943} & \cellbetter{0.928} & \cellbetter{0.935}\\\hline
\cell{ProxSGD2}  & \cell{1.316e-01} & \cell{9.255e-02} & \cell{9.639e-03} & \cell{7.599e-03} & \cell{0.924} & \cell{0.929} & \cell{0.919} & \cell{0.922}\\\hline 
\multirow{1}{*}{\text{~~~Algorithms}} & \multicolumn{8}{|c||}{\textbf{\textbf{real-sim ($\boldsymbol{n=72,309,p=20,958}$)}}}\\\hline
\cell{\textbf{ProxHSGD-SL}}  & \cellbetter{2.873e-02} & \cellbetter{1.020e-02} & \cellbetter{1.859e-03} & \cellbetter{1.040e-03} & \cellbetter{0.962} & \cellbetter{0.968} & \cell{0.687} & \cell{0.662}\\\hline
\cell{\textbf{ProxHSGD-RS1}}  & \cellbest{1.196e-02} & \cellbest{9.608e-07} & \cellbest{1.114e-03} & \cellbest{6.256e-04} & \cellbest{0.968} & \cellbest{0.970} & \cell{0.663} & \cell{0.647}\\\hline
\cell{ProxSpiderBoost}  & \cell{3.606e-02} & \cell{1.442e-02} & \cell{2.208e-03} & \cell{1.218e-03} & \cell{0.960} & \cell{0.967} & \cell{0.692} & \cell{0.666}\\\hline
\cell{ProxSVRG}  & \cell{4.091e-02} & \cell{1.862e-02} & \cell{2.438e-03} & \cell{1.403e-03} & \cell{0.958} & \cell{0.966} & \cellbetter{0.698} & \cellbetter{0.672}\\\hline
\cell{ProxSGD2}  & \cell{1.003e-01} & \cell{5.911e-02} & \cell{5.591e-03} & \cell{3.493e-03} & \cell{0.930} & \cell{0.952} & \cellbest{0.763} & \cellbest{0.719}\\\hline\hline
\multicolumn{9}{|c|}{The loss function $\ell_4$}\\\hline
& \multicolumn{2}{|c|}{Training Loss Residual}& \multicolumn{2}{|c|}{$\norms{G_{\eta}(w_T)}$}& \multicolumn{2}{|c|}{Training Accuracy}& \multicolumn{2}{|c|}{Test Accuracy}\\\cline{2-9}
& \cell{20th ep.} & \cell{40th ep.} & \cell{20th ep.} & \cell{40th ep.} &  \cell{20th ep.} & \cell{40th ep.} & \cell{20th ep.} & \cell{40th ep.}\\\hline
\multirow{1}{*}{\text{~~~Algorithms}} & \multicolumn{8}{|c||}{\textbf{rcv1\_train.binary ($\boldsymbol{n=20,242,p=47,236}$)}}\\\hline
\cell{\textbf{ProxHSGD-SL}}  & \cellbest{1.401e-02} & \cellbest{1.819e-04} & \cellbest{4.727e-03} & \cellbest{2.674e-03} & \cellbest{0.959} & \cellbest{0.964} & \cellbest{0.954} & \cellbest{0.958}\\\hline
\cell{\textbf{ProxHSGD-RS1}}  & \cellbetter{2.044e-02} & \cellbetter{4.125e-03} & \cellbetter{5.745e-03} & \cellbetter{3.253e-03} & \cellbetter{0.956} & \cellbetter{0.962} & \cellbetter{0.951} & \cellbetter{0.957}\\\hline
\cell{ProxSpiderBoost}  & \cell{2.422e-01} & \cell{1.322e-01} & \cell{4.045e-02} & \cell{2.489e-02} & \cell{0.926} & \cell{0.935} & \cell{0.920} & \cell{0.927}\\\hline
\cell{ProxSVRG}  & \cell{2.635e-01} & \cell{1.448e-01} & \cell{4.304e-02} & \cell{2.679e-02} & \cell{0.925} & \cell{0.933} & \cell{0.920} & \cell{0.926}\\\hline
\cell{ProxSGD2}  & \cell{3.686e-02} & \cell{1.163e-02} & \cell{9.593e-03} & \cell{6.269e-03} & \cell{0.952} & \cell{0.959} & \cell{0.943} & \cell{0.953}\\\hline 
\multirow{1}{*}{\text{~~~Algorithms}} & \multicolumn{8}{|c||}{\textbf{\textbf{real-sim ($\boldsymbol{n=72,309,p=20,958}$)}}}\\\hline
\cell{\textbf{ProxHSGD-SL}}  & \cellbest{9.009e-03} & \cellbest{1.070e-04} & \cellbest{2.386e-03} & \cellbest{1.279e-03} & \cellbest{0.981} & \cellbest{0.984} & \cell{0.683} & \cell{0.676}\\\hline
\cell{\textbf{ProxHSGD-RS1}}  & \cellbetter{1.106e-02} & \cellbetter{1.936e-03} & \cellbetter{2.450e-03} & \cellbetter{1.444e-03} & \cellbest{0.981} & \cellbest{0.984} & \cell{0.679} & \cell{0.674}\\\hline
\cell{ProxSpiderBoost}  & \cell{1.526e-01} & \cell{8.450e-02} & \cell{2.428e-02} & \cell{1.168e-02} & \cell{0.930} & \cell{0.960} & \cellbetter{0.767} & \cellbetter{0.707}\\\hline
\cell{ProxSVRG}  & \cell{1.883e-01} & \cell{1.076e-01} & \cell{3.079e-02} & \cell{1.558e-02} & \cell{0.915} & \cell{0.950} & \cellbest{0.797} & \cellbest{0.726}\\\hline
\cell{ProxSGD2}  & \cell{1.889e-02} & \cell{6.334e-03} & \cell{3.569e-03} & \cell{2.267e-03} & \cell{0.979} & \cell{0.982} & \cell{0.676} & \cell{0.678}\\\hline\hline
\end{tabular}
}
\end{center}
\vspace{-1.5ex}
\caption{The performance of $5$ different algorithms on two small datasets: \textbf{The mini-batch case}.}\label{tbl:small_datasets}
\end{table}

As \nhan{can be seen} from Table~\ref{tbl:small_datasets}, either \texttt{ProxHSGD-SL} or \texttt{ProxHSGD-RS1} is the best in these two datasets since they produce lower objective value and smaller gradient mapping norm.
\revise{Three competitors \texttt{ProxSpiderBoost}, \texttt{ProxSVRG}, and \texttt{ProxSGD2} are comparable with our methods in many cases, but in some other cases, our methods highly outperform these schemes.
While our methods may \nhan{produce} better objective value and gradient mapping norm, we can observe that due to some overfitting issues, their test accuracy is slower than those of \texttt{ProxSGD2} or \texttt{ProxSVRG}. 
This can be recognized by comparing the training accuracy and the corresponding test accuracy.}

In addition, we run these five algorithms on the two larger datasets: \texttt{news20.binary} and \texttt{url\_combined}.
The results are reported in Table~\ref{tbl:large_datasets}.

\begin{table}[hpt!]
\newcommand{\cellbest}[1]{{\!\!}{\color{blue}\textbf{#1}}{\!\!}}
\newcommand{\cellbetter}[1]{{\!\!}{\color{blue}#1}{\!\!}}
\newcommand{\cellr}[1]{{}{\color{red}#1}{}}
\newcommand{\cell}[1]{{\!\!}#1{\!\!}}
\begin{center}
\resizebox{\textwidth}{!}{
\begin{tabular}{|l||c|c||c|c||c|c||c|c||}
\hline
 \multicolumn{9}{|c|}{The loss function $\ell_1$}\\\hline
 & \multicolumn{2}{|c|}{Training Loss Residual}& \multicolumn{2}{|c|}{$\norms{G_{\eta}(w_T)}$}& \multicolumn{2}{|c|}{Training Accuracy}& \multicolumn{2}{|c|}{Test Accuracy}\\\cline{2-9}
 & \cell{20th ep.} & \cell{40th ep.} & \cell{20th ep.} & \cell{40th ep.} &  \cell{20th ep.} & \cell{40th ep.} & \cell{20th ep.} & \cell{40th ep.}\\\hline
\multirow{1}{*}{\text{~~~Algorithms}} & \multicolumn{8}{|c||}{\textbf{news20.binary} ($\boldsymbol{n=19,996,p=1,355,191}$)}\\\hline
\cell{\textbf{ProxHSGD-SL}}  & \cellbest{6.614e-02} & \cellbest{1.990e-05} & \cellbest{7.203e-03} & \cellbest{4.471e-03} & \cellbest{0.865} & \cellbest{0.892} & \cellbest{0.687} & \cellbetter{0.698}\\\hline
\cell{\textbf{ProxHSGD-RS1}}  & \cellbetter{9.289e-02} & \cellbetter{1.764e-02} & \cellbetter{8.470e-03} & \cellbetter{5.173e-03} & \cellbetter{0.850} & \cellbetter{0.881} & \cellbetter{0.659} & \cellbest{0.705}\\\hline
\cell{ProxSpiderBoost}  & \cell{2.932e-01} & \cell{1.653e-01} & \cell{1.558e-02} & \cell{1.259e-02} & \cell{0.626} & \cell{0.822} & \cell{0.003} & \cell{0.544}\\\hline
\cell{ProxSVRG}  & \cell{3.073e-01} & \cell{1.816e-01} & \cell{1.301e-02} & \cell{1.356e-02} & \cell{0.625} & \cell{0.812} & \cell{0.001} & \cell{0.500}\\\hline
\cell{ProxSGD2}  & \cell{2.733e-01} & \cell{1.426e-01} & \cell{2.212e-02} & \cell{1.847e-02} & \cell{0.649} & \cell{0.832} & \cell{0.015} & \cell{0.585}\\\hline
\multirow{1}{*}{\text{~~~Algorithms}} & \multicolumn{8}{|c||}{\textbf{url\_combined} ($\boldsymbol{n=2,396,130,p= 3,231,961}$)}\\ \hline
\cell{\textbf{ProxHSGD-SL}}  & \cellbest{5.934e-03} & \cellbest{9.244e-05} & \cellbest{6.157e-04} & \cellbest{3.985e-04} & \cellbest{0.968} & \cellbest{0.970} & \cellbest{0.969} & \cellbetter{0.971}\\\hline
\cell{\textbf{ProxHSGD-RS1}}  & \cellbetter{7.405e-03} & \cellbetter{1.279e-03} & \cellbetter{6.615e-04} & \cellbetter{4.257e-04} & \cellbetter{0.968} & \cellbest{0.970} & \cellbetter{0.968} & \cellbetter{0.970}\\\hline
\cell{ProxSpiderBoost}  & \cell{2.467e-02} & \cell{1.396e-02} & \cell{1.789e-03} & \cell{1.015e-03} & \cell{0.964} & \cell{0.966} & \cell{0.964} & \cell{0.966}\\\hline
\cell{ProxSVRG}  & \cell{4.581e-02} & \cell{2.690e-02} & \cell{3.824e-03} & \cell{1.992e-03} & \cell{0.962} & \cell{0.964} & \cell{0.963} & \cell{0.964}\\\hline
\cell{ProxSGD2}  & \cell{2.013e-02} & \cell{1.101e-02} & \cell{1.478e-03} & \cell{8.988e-04} & \cell{0.965} & \cell{0.967} & \cell{0.965} & \cell{0.967}\\\hline\hline
 \multicolumn{9}{|c|}{The loss function $\ell_2$}\\\hline
 & \multicolumn{2}{|c|}{Training Loss Residual}& \multicolumn{2}{|c|}{$\norms{G_{\eta}(w_T)}$}& \multicolumn{2}{|c|}{Training Accuracy}& \multicolumn{2}{|c|}{Test Accuracy}\\\cline{2-9}
 & \cell{20th ep.} & \cell{40th ep.} & \cell{20th ep.} & \cell{40th ep.} &  \cell{20th ep.} & \cell{40th ep.} & \cell{20th ep.} & \cell{40th ep.}\\\hline
\multirow{1}{*}{\text{~~~Algorithms}} & \multicolumn{8}{|c||}{\textbf{news20.binary} ($\boldsymbol{n=19,996,p=1,355,191}$)}\\\hline
\cell{\textbf{ProxHSGD-SL}}  & \cell{3.636e-02} & \cell{2.190e-02} & \cell{2.977e-03} & \cell{2.006e-03} & \cell{0.791} & \cell{0.826} & \cell{0.451} & \cell{0.604}\\\hline
\cell{\textbf{ProxHSGD-RS1}}  & \cellbest{1.055e-02} & \cellbest{1.831e-06} & \cellbest{1.367e-03} & \cellbest{8.706e-04} & \cellbest{0.844} & \cellbest{0.864} & \cellbest{0.638} & \cellbest{0.633}\\\hline
\cell{ProxSpiderBoost}  & \cellbetter{3.476e-02} & \cellbetter{2.042e-02} & \cellbetter{2.849e-03} & \cellbetter{1.899e-03} & \cellbetter{0.797} & \cellbetter{0.829} & \cellbetter{0.481} & \cellbetter{0.615}\\\hline
\cell{ProxSVRG}  & \cell{3.726e-02} & \cell{2.203e-02} & \cell{3.025e-03} & \cell{1.997e-03} & \cell{0.788} & \cell{0.826} & \cell{0.445} & \cell{0.606}\\\hline
\cell{ProxSGD2}  & \cell{6.471e-02} & \cell{5.196e-02} & \cell{7.270e-03} & \cell{6.595e-03} & \cell{0.625} & \cell{0.691} & \cell{0.003} & \cell{0.112}\\\hline 
\multirow{1}{*}{\text{~~~Algorithms}} & \multicolumn{8}{|c||}{\textbf{url\_combined} ($\boldsymbol{n=2,396,130,p= 3,231,961}$)}\\ \hline
\cell{\textbf{ProxHSGD-SL}}  & \cellbetter{6.055e-03} & \cellbetter{3.834e-03} & \cellbetter{3.613e-04} & \cellbetter{2.308e-04} & \cellbetter{0.966} & \cellbetter{0.969} & \cellbetter{0.966} & \cellbetter{0.969}\\\hline
\cell{\textbf{ProxHSGD-RS1}}  & \cellbest{2.241e-03} & \cellbest{3.597e-08} & \cellbest{1.584e-04} & \cellbest{1.220e-04} & \cellbest{0.971} & \cellbest{0.973} & \cellbest{0.971} & \cellbest{0.973}\\\hline
\cell{ProxSpiderBoost}  & \cell{6.305e-03} & \cell{3.949e-03} & \cell{3.643e-04} & \cell{2.210e-04} & \cellbetter{0.966} & \cellbetter{0.969} & \cellbetter{0.966} & \cellbetter{0.969}\\\hline
\cell{ProxSVRG}  & \cell{1.058e-02} & \cell{6.839e-03} & \cell{7.481e-04} & \cell{4.049e-04} & \cell{0.964} & \cell{0.965} & \cell{0.964} & \cell{0.966}\\\hline
\cell{ProxSGD2}  & \cell{1.449e-02} & \cell{9.151e-03} & \cell{1.173e-03} & \cell{6.126e-04} & \cell{0.962} & \cell{0.964} & \cell{0.963} & \cell{0.964}\\\hline\hline
 \multicolumn{9}{|c|}{The loss function $\ell_3$}\\\hline
 & \multicolumn{2}{|c|}{Training Loss Residual}& \multicolumn{2}{|c|}{$\norms{G_{\eta}(w_T)}$}& \multicolumn{2}{|c|}{Training Accuracy}& \multicolumn{2}{|c|}{Test Accuracy}\\\cline{2-9}
 & \cell{20th ep.} & \cell{40th ep.} & \cell{20th ep.} & \cell{40th ep.} &  \cell{20th ep.} & \cell{40th ep.} & \cell{20th ep.} & \cell{40th ep.}\\\hline
\multirow{1}{*}{\text{~~~Algorithms}} & \multicolumn{8}{|c||}{\textbf{news20.binary} ($\boldsymbol{n=19,996,p=1,355,191}$)}\\\hline
\cell{\textbf{ProxHSGD-SL}}  & \cell{4.182e-02} & \cell{1.890e-02} & \cell{3.482e-03} & \cell{2.518e-03} & \cell{0.691} & \cell{0.787} & \cell{0.115} & \cell{0.451}\\\hline
\cell{\textbf{ProxHSGD-RS1}}  & \cellbest{2.163e-02} & \cellbest{7.406e-06} & \cellbest{2.636e-03} & \cellbest{1.737e-03} & \cellbest{0.782} & \cellbest{0.819} & \cellbest{0.435} & \cellbest{0.586}\\\hline
\cell{ProxSpiderBoost}  & \cellbetter{2.663e-02} & \cellbetter{4.377e-03} & \cellbetter{2.846e-03} & \cellbetter{1.902e-03} & \cellbetter{0.765} & \cellbetter{0.814} & \cellbetter{0.365} & \cellbetter{0.564}\\\hline
\cell{ProxSVRG}  & \cell{3.048e-02} & \cell{6.860e-03} & \cell{3.012e-03} & \cell{2.002e-03} & \cell{0.750} & \cell{0.810} & \cell{0.314} & \cell{0.549}\\\hline
\cell{ProxSGD2}  & \cell{7.544e-02} & \cell{6.249e-02} & \cell{6.868e-03} & \cell{6.169e-03} & \cell{0.623} & \cell{0.625} & \cell{0.001} & \cell{0.003}\\\hline 
\multirow{1}{*}{\text{~~~Algorithms}} & \multicolumn{8}{|c||}{\textbf{url\_combined} ($\boldsymbol{n=2,396,130,p= 3,231,961}$)}\\ \hline
\cell{\textbf{ProxHSGD-SL}}  & \cell{1.048e-02} & \cell{5.507e-03} & \cell{5.294e-04} & \cell{2.958e-04} & \cell{0.964} & \cell{0.966} & \cellbetter{0.965} & \cell{0.966}\\\hline
\cell{\textbf{ProxHSGD-RS1}}  & \cellbest{2.601e-03} & \cellbest{3.495e-08} & \cellbest{1.926e-04} & \cellbest{1.193e-04} & \cellbest{0.968} & \cellbest{0.970} & \cellbest{0.969} & \cellbest{0.970}\\\hline
\cell{ProxSpiderBoost}  & \cellbetter{7.921e-03} & \cellbetter{3.722e-03} & \cellbetter{4.022e-04} & \cellbetter{2.293e-04} & \cellbetter{0.965} & \cellbetter{0.967} & \cellbetter{0.965} & \cellbetter{0.968}\\\hline
\cell{ProxSVRG}  & \cell{1.615e-02} & \cell{8.911e-03} & \cell{8.411e-04} & \cell{4.496e-04} & \cell{0.963} & \cell{0.965} & \cell{0.964} & \cell{0.965}\\\hline
\cell{ProxSGD2}  & \cell{3.364e-02} & \cell{1.886e-02} & \cell{1.945e-03} & \cell{1.005e-03} & \cell{0.962} & \cell{0.963} & \cell{0.962} & \cell{0.964}\\\hline\hline
\multicolumn{9}{|c|}{The loss function $\ell_4$}\\\hline
& \multicolumn{2}{|c|}{Training Loss Residual}& \multicolumn{2}{|c|}{$\norms{G_{\eta}(w_T)}$}& \multicolumn{2}{|c|}{Training Accuracy}& \multicolumn{2}{|c|}{Test Accuracy}\\\cline{2-9}
& \cell{20th ep.} & \cell{40th ep.} & \cell{20th ep.} & \cell{40th ep.} &  \cell{20th ep.} & \cell{40th ep.} & \cell{20th ep.} & \cell{40th ep.}\\\hline
\multirow{1}{*}{\text{~~~Algorithms}} & \multicolumn{8}{|c||}{\textbf{news20.binary} ($\boldsymbol{n=19,996,p=1,355,191}$)}\\\hline
\cell{\textbf{ProxHSGD-SL}}  & \cellbest{4.252e-02} & \cellbest{6.483e-04} & \cellbest{7.475e-03} & \cellbest{4.661e-03} & \cellbest{0.883} & \cellbest{0.914} & \cellbest{0.694} & \cellbest{0.669}\\\hline
\cell{\textbf{ProxHSGD-RS1}}  & \cellbetter{6.343e-02} & \cellbetter{1.603e-02} & \cellbetter{9.342e-03} & \cellbetter{5.527e-03} & \cellbetter{0.865} & \cellbetter{0.901} & \cellbetter{0.676} & \cellbetter{0.675}\\\hline
\cell{ProxSpiderBoost}  & \cell{2.381e-01} & \cell{2.017e-01} & \cell{1.962e-02} & \cell{1.812e-02} & \cell{0.624} & \cell{0.627} & \cell{0.001} & \cell{0.004}\\\hline
\cell{ProxSVRG}  & \cell{2.426e-01} & \cell{2.072e-01} & \cell{2.005e-02} & \cell{1.828e-02} & \cell{0.624} & \cell{0.626} & \cell{0.001} & \cell{0.003}\\\hline
\cell{ProxSGD2}  & \cell{1.007e-01} & \cell{3.930e-02} & \cell{2.068e-02} & \cell{1.704e-02} & \cell{0.843} & \cell{0.884} & \cell{0.573} & \cell{0.688}\\\hline 
\multirow{1}{*}{\text{~~~Algorithms}} & \multicolumn{8}{|c||}{\textbf{url\_combined} ($\boldsymbol{n=2,396,130,p= 3,231,961}$)}\\ \hline
\cell{\textbf{ProxHSGD-SL}}  & \cellbest{4.997e-03} & \cellbest{7.821e-05} & \cellbetter{8.616e-04} & \cellbest{4.849e-04} & \cellbest{0.953} & \cellbest{0.956} & \cellbetter{0.951} & \cellbest{0.955}\\\hline
\cell{\textbf{ProxHSGD-RS1}}  & \cellbetter{5.779e-03} & \cellbetter{1.309e-03} & \cellbest{8.034e-04} & \cellbetter{5.427e-04} & \cellbetter{0.951} & \cellbetter{0.954} & \cell{0.950} & \cellbetter{0.953}\\\hline
\cell{ProxSpiderBoost}  & \cell{2.159e-02} & \cell{1.574e-02} & \cell{3.250e-03} & \cell{1.895e-03} & \cell{0.949} & \cell{0.947} & \cell{0.948} & \cell{0.946}\\\hline
\cell{ProxSVRG}  & \cell{4.104e-02} & \cell{2.319e-02} & \cell{9.529e-03} & \cell{3.680e-03} & \cell{0.954} & \cell{0.949} & \cellbest{0.953} & \cell{0.948}\\\hline
\cell{ProxSGD2}  & \cell{8.181e-03} & \cell{3.501e-03} & \cell{1.555e-03} & \cell{7.355e-04} & \cell{0.950} & \cell{0.952} & \cell{0.949} & \cell{0.951}\\\hline\hline
\end{tabular}
}
\caption{The performance of $5$ different algorithms on two large datasets: \textbf{The mini-batch case}.}\label{tbl:large_datasets}
\end{center}
\end{table}

Again, we observe the same performance \nhan{among} these methods.
Either \texttt{ProxHSGD-SL} or \texttt{ProxHSGD-RS1} works best for every case.
\revise{Three other competiors: \texttt{ProxSGD}, \texttt{ProxSVRG}, and \texttt{ProxSpiderBoost} work well and are relatively comparable with our methods in some cases, but they are still slower than our methods overall.}

\beforesubsec
\subsection{\bf Feedforward neural network training problem}
\aftersubsec
In the last example, we consider the following composite nonconvex optimization problem obtained from a fully connected feedforward neural network training task:
\myeq{eq:fwnn_exam}{
\min_{x\in\R^p}\set{ F(x) := \frac{1}{n}\sum_{i=1}^n\ell\big( F_L(x, a_i), b_i\big) +  \psi(x)},
}
where we concatenate all the weight matrices and bias vectors of the neural network in one vector of variable $x$, $\set{(a_i, b_i)}_{i=1}^n$ is a training dataset, $F_L(\cdot)$ is a composition between all linear transforms and activation functions as $F_L(x, a) := \bsigma_L(W_L\bsigma_{L-1}(W_{L-1}\bsigma_{L-2}(\cdots \bsigma_0(W_0a + \mu_0) \cdots ) + \mu_{L-1}) + \mu_L)$, where $W_i$ is a weight matrix, $\mu_i$ is a bias vector, $\bsigma_i$ is an activation function, $L$ is the number of layers,  $\ell(\cdot)$ is a cross-entropy loss, and $\psi(x) := \lambda\norms{x}_1$ is the $\ell_1$-norm regularizer  for some $\lambda > 0$ to obtain sparse weights.
By defining $f_i(x) := \ell(F_L(x, a_i), b_i)$ for $i=1,\cdots, n$, we can formulate \eqref{eq:fwnn_exam} into the  composite finite-sum setting \eqref{eq:finite_sum}.

We implement mini-batch variants of Algorithm~\ref{alg:A1} and Algorithm~\ref{alg:A2}, and compare with two other methods: \texttt{ProxSVRG} and \texttt{ProxSpiderBoost} in \href{https://www.tensorflow.org}{TensorFlow} using the well-known dataset \texttt{mnist} to evaluate their performance.

In the first experiment, we use an \nhan{one-hidden-layer-fully-connected} neural network: $784 \times 128 \times 10$, while in the second test, we increase the number of neurons in the hidden layer to obtain another \nhan{fully-connected} neural network: $784 \times 800 \times 10$.
The activation function $\bsigma_i$ for the hidden layer is ReLU, and for the output layer is soft-max.

Our experiment configuration is as follows.
We choose $\lambda := \frac{1}{n}$ to obtain sparse weights.
We set $\gamma = 0.95$ for all methods and tune $\eta$ to obtain the best results.
Here, we obtain $\eta = 0.3$ for \texttt{ProxHSGD-SL} and \texttt{ProxHSGD-RS1}. 
We also tune $\eta$ in \texttt{ProxSpiderBoost} and \texttt{ProxSVRG} to obtain the best results. 
We finally get $\eta = 0.12$ for \texttt{ProxSpiderBoost} and $\eta = 0.2$ for  \texttt{ProxSVRG}.
We set $\hat{b} := 100$ for our algorithms, $\hat{b} := \lfloor\sqrt{n}\rfloor$ for \texttt{ProxSpiderBoost}, and $\hat{b} := \lfloor n^{2/3}\rfloor$ for \texttt{ProxSVRG} and set the epoch length $m := \lfloor\frac{n}{\hat{b}}\rfloor$.
The performance of the four algorithms running on the first network is reported in Figure~\ref{fig:neural_net1}.

\begin{figure}[hpt!]
\begin{center}
\includegraphics[width = 1\textwidth]{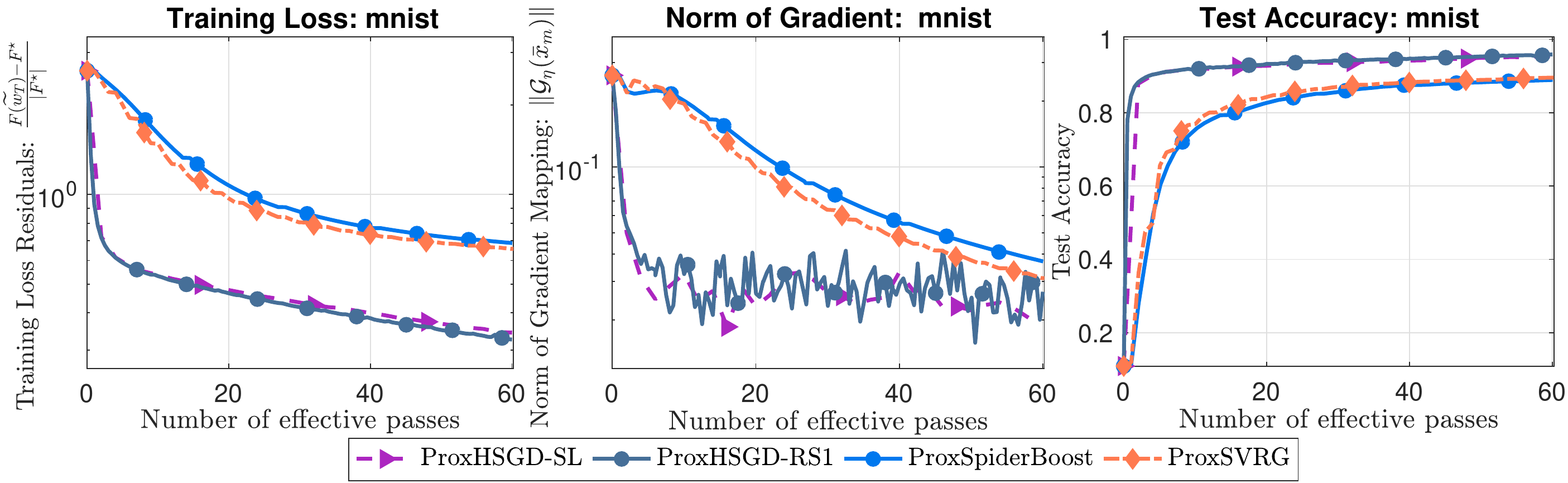}
\vspace{-2ex}
\caption{The performance (training loss, norm of gradient mapping, and test accuracy) of $4$ algorithms on the \texttt{mnist} dataset for solving \eqref{eq:fwnn_exam}: A fully-connected $784\times128\times 10$ neural network.}\label{fig:neural_net1}
\end{center}
\vspace{-1ex}
\end{figure}

From Figure~\ref{fig:neural_net1}, both \texttt{ProxHSGD-SL} and \texttt{ProxHSGD-RS1} work relatively well in this example, and outperform two other methods.
On one hand, our methods achieve better training loss values, norms of gradient mapping, and test accuracy than both \texttt{ProxSpiderBoost} and \texttt{ProxSVRG}.
On the other hand, the restarting variant \texttt{ProxHSGD-RS1} appears to be slightly better than \texttt{ProxHSGD-SL}.

\begin{figure}[hpt!]
\vspace{-2ex}
\begin{center}
\includegraphics[width = 1\textwidth]{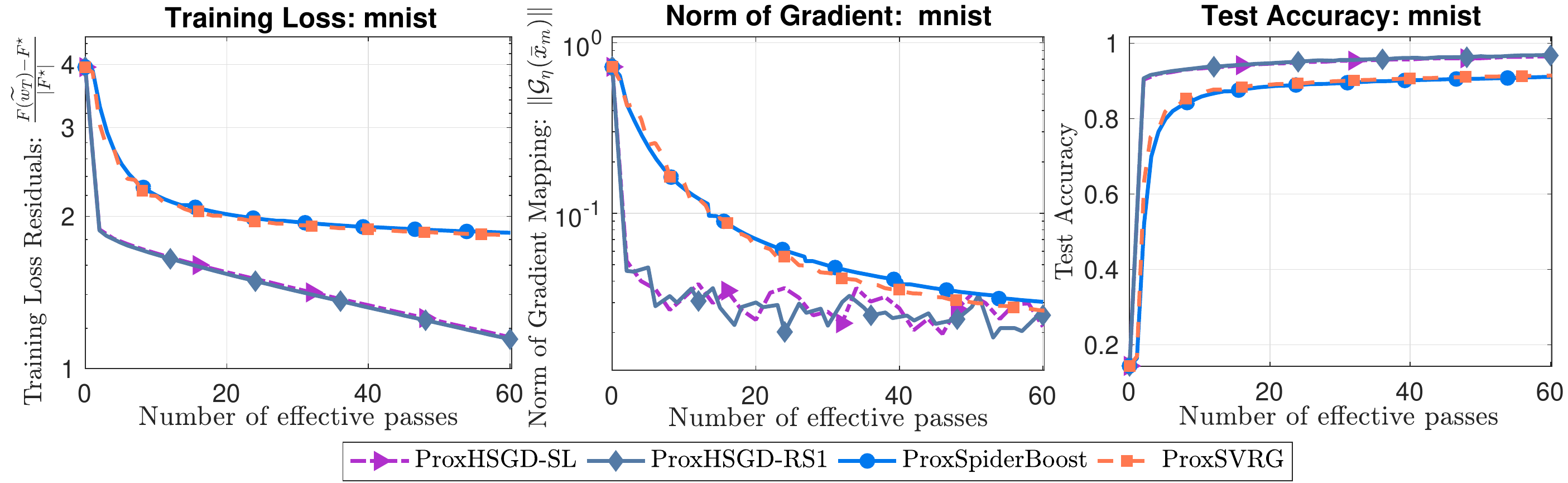}
\vspace{-2ex}
\caption{The performance (training loss, norm of gradient mapping, and test accuracy)  of $4$ algorithms on the \texttt{mnist} dataset for solving \eqref{eq:fwnn_exam}: A fully-connected $784\times800\times 10$ neural network.}\label{fig:neural_net2}
\end{center}
\vspace{-5ex}
\end{figure}

Besides, the results when running these algorithms on the second neural network are given in Figure~\ref{fig:neural_net2}.
We can observe the same behavior of these four algorithms as in Figure~\ref{fig:neural_net1}, but  \texttt{ProxHSGD-RS1} \nhan{does not exhibit clear advantage over} \texttt{ProxHSGD-SL}.

\appendix
\beforesec
\section{Appendix: Properties of Hybrid Stochastic Estimators}\label{sec:appendix1}
\aftersec
This appendix provides the full proof of our theoretical results in Section~\ref{sec:stochastic_estimators}.
However, we also need the following lemma in the sequel.
Hence, we prove it here.

\begin{lemma}\label{le:adaptive_step_size}
Given $L > 0$, $\delta > 0$, $\epsilon > 0$, and $\omega \in (0, 1)$, let $\sets{\gamma_t}_{t=0}^m$ be the sequence updated by
\myeq{eq:update_of_eta_t_proof}{
\gamma_m := \frac{\delta}{L}~~~~\text{and}~~~~\gamma_t := \frac{\delta}{L + \epsilon L^2\big[\omega\gamma_{t+1} + \omega^2\gamma_{t+2} + \cdots + \omega^{(m-t)}\gamma_m\big]},
}
for $t=0,\cdots, m-1$.
Then  
\myeq{eq:stepsize_pros}{
0 < \gamma_0 < \gamma_1 < \cdots < \gamma_m = \frac{\delta}{L}~~~\text{and}~~~\Sigma_m := \sum_{t=0}^m\gamma_t \geq \frac{\delta(m+1)\sqrt{1-\omega}}{L\left[\sqrt{1-\omega} + \sqrt{1 - \omega + 4\delta\omega\epsilon }\right]}.
}
\end{lemma}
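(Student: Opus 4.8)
The plan is to eliminate the nonlocal sum in \eqref{eq:update_of_eta_t_proof} and reduce the recursion to a scalar one-step map. Setting $d_t := \delta/\gamma_t = L + \epsilon L^2 B_t$ with $B_t := \sum_{j=1}^{m-t}\omega^j\gamma_{t+j}$, the geometric weights satisfy $B_t = \omega(\gamma_{t+1} + B_{t+1})$ and $B_m = 0$. Substituting $\gamma_{t+1} = \delta/d_{t+1}$ and $B_{t+1} = (d_{t+1}-L)/(\epsilon L^2)$ turns this into the autonomous iteration
\[
d_t = \phi(d_{t+1}), \qquad \phi(x) := (1-\omega)L + \omega x + \frac{\omega\epsilon L^2\delta}{x}, \qquad d_m = L .
\]
Solving $\phi(x)=x$, i.e. $(1-\omega)x^2 - (1-\omega)Lx - \omega\epsilon L^2\delta = 0$, produces the unique positive fixed point
\[
x^{\star} = \frac{L}{2}\cdot\frac{\sqrt{1-\omega}+\sqrt{1-\omega+4\omega\delta\epsilon}}{\sqrt{1-\omega}},
\]
and one checks that the stationary step-size $\delta/x^{\star}$ is exactly twice the per-iteration quantity appearing on the right-hand side of \eqref{eq:stepsize_pros}. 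This already explains the radical in the target bound: it is precisely the value $\delta/x^{\star}$.

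Positivity is immediate, since $B_t \ge 0$ forces $d_t \ge L > 0$, hence $\gamma_t = \delta/d_t \in (0,\delta/L]$. For the strict ordering $\gamma_0 < \cdots < \gamma_m$ I would argue by downward induction that the orbit $d_m, d_{m-1},\dots,d_0$ is strictly increasing and confined to the interval $(0,x^{\star})$. The two ingredients are: (i) $\phi(x) > x$ for every $x \in (0,x^{\star})$, which follows from the factorization $x(\phi(x)-x) = -(1-\omega)(x-x^{\star})(x - x^{\star}_{-})$ with $x^{\star}_{-} < 0$; and (ii) forward-invariance of $(0,x^{\star})$, so that $d_{t+1}\in[L,x^{\star})$ implies $d_t = \phi(d_{t+1}) \in (d_{t+1},x^{\star})$. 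Combining (i) and (ii) with $d_m = L$ gives $L = d_m < d_{m-1} < \cdots < d_0 < x^{\star}$, whence $\gamma_t = \delta/d_t$ is strictly increasing in $t$ with $\gamma_m = \delta/L$, proving the first claim. As a bonus, $d_t < x^{\star}$ yields the sharp per-term lower bound $\gamma_t > \delta/x^{\star}$ for all $t$.

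The lower bound on $\Sigma_m$ then follows with room to spare: summing $\gamma_t > \delta/x^{\star}$ over $t = 0,\dots,m$ gives
\[
\Sigma_m > (m+1)\frac{\delta}{x^{\star}} = \frac{2\delta(m+1)\sqrt{1-\omega}}{L\left[\sqrt{1-\omega}+\sqrt{1-\omega+4\delta\omega\epsilon}\right]},
\]
which is twice the asserted estimate in \eqref{eq:stepsize_pros}, so the factor $\tfrac12$ in the statement is comfortable slack. A self-contained alternative that sidesteps the fixed-point analysis is to sum the identity $\gamma_t d_t = \delta$, obtaining $L\Sigma_m + \epsilon L^2\sum_{0\le t<t'\le m}\omega^{t'-t}\gamma_t\gamma_{t'} = \delta(m+1)$, and then to bound the double sum by $\tfrac{\omega}{1-\omega}\sum_t\gamma_t^2 \le \tfrac{\omega}{1-\omega}\cdot\tfrac{\delta}{L}\Sigma_m$ via AM--GM, $\sum_{k\ge1}\omega^k = \omega/(1-\omega)$, and $\gamma_t \le \delta/L$; solving the resulting linear inequality for $\Sigma_m$ gives a clean, though slightly different, constant.

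The step I expect to be the crux is the forward-invariance (ii), because $\phi$ is convex with its minimum at $x = L\sqrt{\epsilon\delta}$ and hence is not monotone on $(0,x^{\star})$ in general, so one cannot merely invoke $\phi(x) < \phi(x^{\star}) = x^{\star}$. The decisive observation is the very first step: $d_{m-1} = \phi(L) = L(1+\omega\delta\epsilon)$ stays below $x^{\star}$ exactly when $\epsilon\delta < (1-\omega)^{-1}$, and once $d_{m-1} < x^{\star}$ holds, convexity of $\phi$ on $[L,x^{\star}]$ (whose maximum is then attained at $x^{\star}$) keeps the entire orbit inside $(0,x^{\star})$, closing the induction. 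This inequality is precisely the regime in which the lemma is applied, where $\omega = \beta^2$ with $\beta = 1 - [\tilde b(m+1)]^{-1/2}$ close to $1$, so that $(1-\omega)^{-1}$ grows with $m$ while $\epsilon\delta$ stays bounded; I would therefore carry this condition explicitly through the induction.
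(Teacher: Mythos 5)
Your fixed-point reduction is correct, and it is a genuinely different argument from the paper's. The paper never examines the one-step map: it asserts the ordering $0<\gamma_0<\cdots<\gamma_m$ as ``obvious'', then sums the identities $\epsilon L^2\gamma_t\big(\omega\gamma_{t+1}+\cdots+\omega^{m-t}\gamma_m\big)=\delta-L\gamma_t$, controls the geometric inner products with Chebyshev's sum inequality (applied twice, both times requiring the $\gamma_t$ to be sorted), invokes Cauchy--Schwarz, and solves the resulting quadratic inequality $\tfrac{m\omega\epsilon L^2}{1-\omega}\Sigma_m^2+L(m^2+m+2)\Sigma_m-\delta(m+1)(m^2+m+2)\ge 0$ for $\Sigma_m$. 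Both routes end at the same constant --- the paper's final display even carries the factor $2$ you noticed, which the statement of \eqref{eq:stepsize_pros} then discards --- but yours additionally yields the sharper per-iterate bound $\gamma_t>\delta/x^{\star}$, not merely a bound on the sum.

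The condition you isolated as the crux is not a removable artifact of your method; it is exactly where the lemma, and the paper's own proof of it, break. Without $(1-\omega)\epsilon\delta<1$, the monotonicity claim in \eqref{eq:stepsize_pros} is false: take $L=\epsilon=1$, $\delta=20$, $\omega=0.9$, $m=2$. Then $\gamma_2=20$, $\gamma_1=\frac{20}{1+0.9\cdot 20}=\frac{20}{19}\approx 1.053$, but $\gamma_0=\frac{20}{1+0.9\gamma_1+0.81\gamma_2}\approx 1.102>\gamma_1$. In your picture, $d_1=\phi(L)=L(1+\omega\epsilon\delta)=19$ overshoots $x^{\star}=\tfrac{1}{2}\big(1+\sqrt{721}\big)\approx 13.93$, after which $\phi$ pulls the orbit back down. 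Since both of the paper's Chebyshev steps need the $\gamma_t$ sorted, this example also invalidates the paper's derivation of the $\Sigma_m$ bound as written; the lemma is only correct under a restriction such as yours, which does hold where it is invoked in Theorem~\ref{th:singe_loop_adapt_step} (there $\omega=\beta^2$ with $1-\omega$ of order $[\tilde{b}(m+1)]^{-1/2}$, while $\epsilon\delta$ is a constant determined by $L\eta$), so carrying the condition explicitly, as you propose, is the right repair for both arguments. One caveat: your ``self-contained alternative'' via AM--GM gives $\Sigma_m\ge \frac{\delta(m+1)(1-\omega)}{L(1-\omega+\omega\epsilon\delta)}$, which is not the stated bound up to a constant --- as $\omega\to 1$ it scales like $1-\omega$ rather than $\sqrt{1-\omega}$, which in the application would degrade the $(m+1)^{3/4}$ factor in Theorem~\ref{th:singe_loop_adapt_step} to $(m+1)^{1/2}$; so that shortcut cannot substitute for your main route.
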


\begin{proof}
First, from \eqref{eq:update_of_eta_t_proof} it is obvious to show that $0 < \gamma_0 < \cdots < \gamma_{m-1} = \frac{\delta}{L(1+\epsilon\omega)} < \gamma_m = \frac{\delta}{L}$. 
At the same time, since $\omega \in (0, 1)$, we have $1 \geq \omega \geq \omega^2 \geq \cdots \geq \omega^{m}$.
By Chebyshev's sum inequality, we have
\myeq{eq:estimate1}{
\begin{array}{ll}
(m-t)\big(\omega\gamma_{t+1} + \omega^2\gamma_{t+2} + \cdots + \omega^{m-t}\gamma_m\big) &\leq \big(\sum_{j=t+1}^m\gamma_i\big)\left(\omega + \omega^2 + \cdots + \omega^{m-t}\right) \vspace{1ex}\\
&\leq \frac{\omega}{1-\omega}\big(\sum_{j=t+1}^m\gamma_i\big).
\end{array}
}
From the update \eqref{eq:update_of_eta_t_proof}, we also have
\myeq{eq:estimate2}{
\left\{\begin{array}{ll}
\epsilon L^2\gamma_0(\omega\gamma_1 + \omega^2\gamma_2 + \cdots + \omega^{m}\gamma_m) &= \delta - L\gamma_0 \vspace{1ex}\\
\epsilon  L^2\gamma_1(\omega\gamma_2 + \omega^2\gamma_3 + \cdots + \omega^{m-1}\gamma_{m}) &= \delta - L\gamma_1 \vspace{1ex}\\
\cdots & \cdots \vspace{1ex}\\
\epsilon  L^2\gamma_{m-1}\omega\gamma_m &= \delta - L\gamma_{m-1} \vspace{1ex}\\
0  &= \delta - L\gamma_{m}.
\end{array}\right.
}
Substituting \eqref{eq:estimate1} into \eqref{eq:estimate2}, we get
\myeqn{
\left\{\begin{array}{ll}
\frac{\omega \epsilon L^2}{1-\omega}\gamma_0(\gamma_0 + \gamma_1   + \cdots + \gamma_m) &\geq \delta m  - mL\gamma_0 + \frac{\omega \epsilon L^2}{1-\omega}\gamma_0^2 \vspace{1ex}\\
\frac{\omega \epsilon L^2}{1-\omega}\gamma_1(\gamma_0 + \gamma_1  + \cdots + \gamma_{m}) &\geq \delta(m-1) - (m-1)L\gamma_1 + \frac{\omega \epsilon L^2}{1-\omega}(\gamma_1\gamma_0 + \gamma_1^2) \vspace{1ex}\\
\cdots & \cdots \vspace{1ex}\\
\frac{\omega \epsilon L^2}{1-\omega}\gamma_{m-1}(\gamma_0 + \gamma_1 + \cdots + \gamma_m) &\geq \delta - L\gamma_{m-1} + \frac{\omega \epsilon L^2}{1-\omega}(\gamma_{m-1}\gamma_0 + \cdots + \gamma_{m-1}^2) \vspace{1ex}\\
\frac{\omega \epsilon L^2}{1-\omega}\gamma_{m}(\gamma_0 + \gamma_1 + \cdots + \gamma_m) &\geq \delta - L\gamma_{m} + \frac{\omega \epsilon L^2}{1-\omega}(\gamma_{m}\gamma_0 + \cdots + \gamma_{m}^2).
\end{array}\right.
}
Let us define $\Sigma_m := \sum_{t=0}^m\gamma_t$ and $S_m := \sum_{t=0}^m\gamma_t^2$.
Summing up both sides of the above inequalities, we get 
\myeqn{
\frac{\omega \epsilon L^2}{1-\omega}\Sigma_m^2 \geq  \frac{\delta(m^2 + m + 2)}{2} - L(m\gamma_0 + (m-1)\gamma_1 + \cdots + \gamma_{m-1} + \gamma_m) + \frac{\omega \epsilon L^2}{2(1-\omega)}\big(S_m + \Sigma_m^2\big).
}
Using again Chebyshev's sum inequality, we have
\myeqn{
m\gamma_0 + (m-1)\gamma_1 + \cdots + \gamma_{m-1} + \gamma_m \leq \frac{m^2 + m + 2}{2(m+1)}\left(\sum_{t=0}^m\gamma_t\right) = \frac{(m^2 + m + 2)\Sigma_m}{2(m+1)}.
}
Note that $(m+1)S_m \geq \Sigma_m^2$ by Cauchy-Schwarz's inequality, which shows that $S_m + \Sigma_m^2 \geq \big(\frac{m+2}{m+1}\big)\Sigma_m^2$.
Combining three last inequalities, we obtain the following quadratic inequation in $\Sigma_m > 0$:
\myeqn{
\frac{m\omega \epsilon L^2}{(1-\omega)}\Sigma_m^2 + L(m^2 + m + 2)\Sigma_m - \delta(m+1)(m^2 + m + 2) \geq 0.
}
Solving this inequation with respect to $\Sigma_m > 0$, we obtain
\myeqn{
\begin{array}{ll}
\Sigma_m &\geq \frac{(1-\omega)\big[\sqrt{(m^2 + m + 2)^2 + \frac{4m(m+1)(m^2 + m + 2)\omega\epsilon \delta}{1-\omega}} - (m^2 + m + 2)\big]}{2\epsilon \omega m L} \vspace{1ex}\\
&= \frac{2\delta(m+1)}{L\left[1 + \sqrt{1 + \frac{4m(m+1)\omega\delta\epsilon }{(1-\omega)(m^2 + m+2)}}\right]} \vspace{1ex}\\
&\geq \frac{2\delta(m+1)\sqrt{1-\omega}}{L\left[\sqrt{1-\omega} + \sqrt{1 - \omega + 4\delta\omega\epsilon }\right]} ~~~~~\text{since}~~\frac{m(m+1)}{m^2+m+2} < 1.
\end{array}
}
This proves \eqref{eq:stepsize_pros}.
\end{proof}

\beforesubsec
\subsection{\bf The proof of Lemma~\ref{le:key_pro_of_vhat_t}: Variance estimate with mini-batch}\label{apdx:le:key_pro_of_vhat_t}
\aftersubsec
The proof of the first expression of \eqref{eq:key_pro_of_vhat_t} is the same as in Lemma~\ref{le:key_estimate10}.
We only prove the second one. 
Let $\Delta_{\Bc_t} := \frac{1}{b_t}\sum_{i\in\Bc_t}\left[ G_{\xi_i}(x_t) - G_{\xi_i}(x_{t-1})\right]$, $\Delta_t := G(x_t) - G(x_{t-1})$, $\hat{\delta}_t := \hat{v}_t - G(x_t)$, and $\delta{u}_t := u_t -  G(x_t)$.
Clearly, we have 
\myeqn{
\Exps{\Bc_t}{\Delta_{\Bc_t}} = \Delta_t~~~\text{and}~~~\Exps{\hat{\Bc}_t}{\delta{u}_t} = 0.
}
Moreover, we can rewrite $\hat{v}_t$ as
\myeqn{
\hat{\delta}_t  = \beta_{t-1}\hat{\delta}_{t-1} + \beta_{t-1}\Delta_{\Bc_t} + (1-\beta_{t-1})\delta{u}_t - \beta_{t-1}\Delta_t.
}
Therefore, using these two expressions, we can derive
\myeqn{
\begin{array}{lcl}
\Exps{(\Bc_t,\hat{\Bc}_t)}{\norms{\hat{\delta}_t}^2}  &=& \beta_{t-1}^2\norms{\hat{\delta}_{t-1}}^2 +  \beta_{t-1}^2\Exps{\Bc_t}{\norms{\Delta_{\Bc_t}}^2} + (1-\beta_{t-1})^2\Exps{\hat{\Bc}_t}{\norms{\delta{u}_t}^2}  + \beta_{t-1}^2\norms{\Delta_t}^2\vspace{1ex}\\
&& + {~} 2\beta_{t-1}^2\iprods{\hat{\delta}_{t-1},\Exps{\Bc_t}{\Delta_{\Bc_t}}} + 2\beta_{t-1}(1-\beta_{t-1})\iprods{\hat{\delta}_{t-1}, \Exps{\hat{\Bc}_t}{\delta{u}_t}} - 2\beta_{t-1}^2\iprods{\hat{\delta}_{t-1},\Delta_t} \vspace{1ex}\\
&& + {~} 2\beta_{t-1}(1-\beta_{t-1})\Exps{(\Bc_t,\hat{\Bc}_t)}{\iprods{\Delta_{\Bc_t}, \delta{u}_t}} - 2\beta_{t-1}^2\iprods{\Exps{\Bc_t}{\Delta_{\Bc_t}}, \Delta_t} \vspace{1ex}\\
&& - {~} 2\beta_{t-1}(1-\beta_{t-1})\iprods{\Exps{\hat{\Bc}_t}{\delta{u}_t}, \Delta_t} \vspace{1ex}\\
&=&  \beta_{t-1}^2\norms{\hat{\delta}_{t-1}}^2 +  \beta_{t-1}^2\Exps{\Bc_t}{\norms{\Delta_{\Bc_t}}^2} + (1-\beta_{t-1})^2\Exps{\hat{\Bc}_t}{\norms{\delta{u}_t}^2}  - \beta_{t-1}^2\norms{\Delta_t}^2.
\end{array}
}
Similar to the proof of \cite[Lemma 2]{Pham2019}, for the finite-sum case (i.e., $\vert\Omega\vert = n$), we can show that
\myeqn{
\Exps{\Bc_t}{\norms{\Delta_{\Bc_t}}^2} = \frac{n(b_t-1)}{(n-1)b_t}\norms{\Delta_t}^2 + \frac{(n-b_t)}{(n-1)b_t}\Exps{\xi}{\norms{G_{\xi}(x_t) - G_{\xi}(x_{t-1})}^2}. 
}
For the expectation case, we have
\myeqn{
\Exps{\Bc_t}{\norms{\Delta_{\Bc_t}}^2} =  \left(1- \frac{1}{b_t}\right)\norms{\Delta_t}^2 + \frac{1}{b_t}\Exps{\xi}{\norms{G_{\xi}(x_t) - G_{\xi}(x_{t-1})}^2}. 
}
Using the definition of $\rho$ in Lemma~\ref{le:upper_bound_new_batch}, we can unify these two expressions as
\myeqn{
\Exps{\Bc_t}{\norms{\Delta_{\Bc_t}}^2} =  \left(1- \rho\right)\norms{\Delta_t}^2 + \rho\Exps{\xi}{\norms{G_{\xi}(x_t) - G_{\xi}(x_{t-1})}^2}. 
}
 Substituting the last expression into the previous one, we obtain the second expression of \eqref{eq:key_pro_of_vhat_t}.
\Eproof

\beforesubsec
\subsection{\bf The proof of Lemma~\ref{le:upper_bound_new_batch}: Upper bound of  mini-batch variance}\label{apdx:le:upper_bound_new_batch}
\aftersubsec
From Lemma~\ref{le:key_pro_of_vhat_t}, taking the expectation with respect to $\Fc_{t+1} := \sigma(x_0,\Bc_0, \hat{\Bc}_0, \cdots, \Bc_t, \hat{\Bc}_t)$, we have
\myeqn{
\begin{array}{lcl}
\Exp{\norms{\hat{v}_t - G(x_t)}^2} &\leq&  \beta_{t-1}^2\Exp{\norms{\hat{v}_{t-1}  - G(x_{t-1})}^2} \vspace{1ex}\\
&& + {~} \rho L^2\beta_{t-1}^2 \Exp{\norms{x_t - x_{t-1}}^2} + (1-\beta_{t-1})^2\Exps{\hat{\Bc}_t}{\norms{u_t - G(x_t)}^2}.
\end{array}
}
In addition, from \cite[Lemma 2]{Pham2019}, we have $\Exps{\hat{\Bc}_t}{\norms{u_t - G(x_t)}^2} \leq \hat{\rho}\Exps{\xi}{\norms{G_{\xi}(x_t) - G(x_t)}^2} = \hat{\rho}\sigma_t^2$, where $\sigma_t^2 := \Exps{\xi}{\norms{G_{\xi}(x_t) - G(x_t)}^2}$.

Let $A_t^2 := \Exp{\norms{\hat{v}_t - G(x_t)}^2} $ and $B_t^2 := \Exp{\norms{x_{t +1}- x_{t}}^2}$.
Then, the above estimate can be upper bounded as follows:
\myeqn{
A_t^2 \leq \beta_{t-1}^2A_{t-1}^2 + \rho L^2\beta_{t-1}^2B_{t-1}^2 + \hat{\rho}(1-\beta_{t-1})^2\sigma_t^2.
}
By following inductive step as in the proof of Lemma \ref{le:upper_bound_new}, we obtain from the last inequality that
\myeqn{
\begin{array}{lcl}
A_t^2 &\leq& \left(\beta_{t-1}^2\cdots \beta_0^2 \right)A_0^2 + \rho L^2 \left(\beta_{t-1}^2\cdots \beta_0^2\right)B_0^2 + \cdots + \rho L^2 \beta_{t-1}^2B_{t-1}^2 \vspace{1ex}\\
&&+ {~} \hat{\rho}\left[\left(\beta_{t-1}^2\cdots\beta_1^2\right)(1-\beta_0)^2\sigma_1^2   + \cdots + (1-\beta_{t-1})^2\sigma_{t}^2  \right].
\end{array}
}
Using the definition of $\omega_{t}$, $\omega_{i, t}$, and $S_{t}$ from \eqref{eq:other_quantities}, the previous inequality becomes
\myeqn{
A_t^2 \leq \omega_{t}A_0^2 + \rho L^2 \sum_{i=0}^{t-1}\omega_{i,t}B_i^2 + \hat{\rho} S_t,
}
which is the same as \eqref{eq:vt_variance_bound_new_batch} by substituting the definition of $A_t$ and $B_t$ above into it.
\Eproof

\beforesec
\section{The Proof of Technical Results in Section~\ref{sec:sgd_algs}: The Single Sample Case}\label{sec:apdx:sgd_algs}
\aftersec
We provide the full proof of technical results in Section~\ref{sec:sgd_algs}.

\beforesubsec
\subsection{\bf The proof of Lemma~\ref{le:upper_bound_new}: Key estimate}\label{apdx:le:upper_bound_new}
\aftersubsec
From the update $x_{t+1} := (1-\gamma_t)x_t + \gamma_t\widehat{x}_{t+1}$ at Step~\ref{step:i3} of Algorithm~\ref{alg:A1}, we have $x_{t+1} - x_t = \gamma_t(\widehat{x}_{t+1} - x_t)$. From the $L$-average smoothness condition in Assumption~\ref{as:A1}, one can write
\myeq{eq:lm3_est1}{
\begin{array}{ll}
f(x_{t+1}) &\leq f(x_t) + \iprods{\nabla f(x_t), x_{t+1} - x_t} + \frac{L}{2}\norms{x_{t+1} - x_t}^2\\
&= f(x_{t}) + \gamma_t \iprods{\nabla f(x_t),\widehat{x}_{t+1} - x_t} + \frac{L\gamma_t^2}{2}\norms{\widehat{x}_{t+1} - x_t}^2.
\end{array}
}
Using convexity of $\psi$, we can show that
\myeq{eq:lm3_est2}{
\psi(x_{t+1}) \leq (1-\gamma_t)\psi(x_t) + \gamma_t \psi(\widehat{x}_{t+1}) \leq \psi (x_t) + \gamma_t \iprods{\nabla \psi(\widehat{x}_{t+1}), \widehat{x}_{t+1} - x_t},
}
where $\nabla \psi(\widehat{x}_{t+1}) \in \partial \psi(\widehat{x}_{t+1})$ is any subgradient of $\psi$ at $\widehat{x}_{t+1}$.

Utilizing the optimality condition of $\widehat{x}_{t+1} = \prox_{\eta_t \psi}(x_{t} - \eta_t v_t)$, we can show that $\nabla \psi(\widehat{x}_{t+1}) = -v_t - \frac{1}{\eta_t}(\widehat{x}_{t+1} - x_t)$ for some $\nabla \psi(\widehat{x}_{t+1}) \in \partial \psi(\widehat{x}_{t+1})$. 
Substituting this relation into \eqref{eq:lm3_est2}, we get
\myeq{eq:lm3_est3}{
\psi(x_{t+1}) \leq \psi (x_t) - \gamma_t \left\langle v_t, \widehat{x}_{t+1} - x_t\right\rangle - \frac{\gamma_t}{\eta_t}\|\widehat{x}_{t+1} - x_t\|^2.
}
Combining \eqref{eq:lm3_est1} and \eqref{eq:lm3_est3}, and using $F(x) := f(x) + \psi(x)$ from \eqref{eq:ncvx_prob}, we obtain
\myeq{eq:lm3_est4}{
F(x_{t+1}) \leq F (x_t) + \gamma_t \left \langle\nabla f(x_t) -v_t, \widehat{x}_{t+1} - x_t\right\rangle -\left( \frac{\gamma_t}{\eta_t} - \frac{L\gamma_t^2}{2}\right)\|\widehat{x}_{t+1} - x_t\|^2.
}
For any $c_t > 0$, we can always write
\myeqn{
\begin{array}{ll}
\iprods{\nabla f(x_t) -v_t, \widehat{x}_{t+1} - x_t} &= \frac{1}{2c_t}\norms{ \nabla f(x_t) -v_t }^2 + \frac{c_t}{2} \norms{ \widehat{x}_{t+1} - x_t }^2 \vspace{1ex}\\
& - \frac{1}{2c_t}\norms{ \nabla f(x_t) -v_t - c_t( \widehat{x}_{t+1} - x_t) }^2.
\end{array}
}
Utilizing this expression, we can rewrite \label{eq:lm3_est4} as
\myeqn{ 
F(x_{t+1}) \leq F (x_t) + \frac{\gamma_t}{2c_t}\| \nabla{f}(x_t -v_t \|^2 -\left( \frac{\gamma_t}{\eta_t} - \frac{L\gamma_t^2}{2} - \frac{\gamma_tc_t}{2}\right)\norms{\widehat{x}_{t+1} - x_t}^2 - \frac{\tilde{\sigma}_t^2}{2}.
}
where $\tilde{\sigma}_t^2 := \frac{\gamma_t}{c_t}\norms{ \nabla{f}(x_t -v_t - c_t( \widehat{x}_{t+1} - x_t) }^2 \geq 0$.

\noindent
Taking expectation both sides of this inequality over the entire history $\Fc_{t+1}$, we obtain
\myeq{eq:lm3_est5}{
\begin{array}{ll}
\Exp{F(x_{t+1})} &\leq \Exp{F(x_t)} + \frac{\gamma_t}{2c_t}\Exp{\Vert\nabla{f}(x_t) - v_t\Vert^2} \vspace{1ex}\\
& - {~} \Big(\frac{\gamma_t}{\eta_t} - \frac{L\gamma_t^2}{2}- \frac{\gamma_tc_t}{2}\Big)\Exp{\Vert\widehat{x}_{t+1} - x_t\Vert^2} - \frac{1}{2}\Exp{\tilde{\sigma}_t^2}.
\end{array}
}
Next, from the definition of gradient mapping $\Grad_{\eta}(x):= \frac{1}{\eta}\left(x - \prox_{\eta \psi}(x - \eta\nabla f(x))\right)$ in \eqref{eq:grad_map}, we can see that
\myeqn{
\eta_{t}\norms{\Grad_{\eta_t}(x_t)} = \norms{x_t - \prox_{\eta_t \psi}\left(x_t - \eta_t \nabla f(x_t)\right)}.
}
Using this expression, the triangle inequality, and the nonexpansive property $\norms{\prox_{\eta\psi}(z) - \prox_{\eta\psi}(w)} \leq \norms{ z - w}$ of $\prox_{\eta\psi}$, we can derive that
\myeqn{
\begin{array}{ll}
\eta_t\norms{\Grad_{\eta_t}(x_t)} &\leq \norms{\widehat{x}_{t+1} - x_t} + \norms{ \mathrm{prox}_{\eta_t\psi}(x_t - \eta_t\nabla{f}(x_t)) - \widehat{x}_{t+1}} \vspace{1ex}\\
& = \norms{\widehat{x}_{t+1} - x_t} + \norms{\prox_{\eta_t\psi}(x_t - \eta_t\nabla{f}(x_t)) - \prox_{\eta_t\psi}(x_t - \eta_tv_t)} \vspace{1ex}\\
& \leq \norms{\widehat{x}_{t+1} - x_t} + \eta_t\norms{ \nabla{f}(x_t) - v_t}.
\end{array}
}
Now, for any $r_t > 0$, the last estimate leads to
\myeqn{ 
\eta_t^2\Exp{\norms{\Grad_{\eta_t}(x_t)}^2} \leq \left(1+\tfrac{1}{r_t}\right)\Exp{\norms{\widehat{x}_{t+1} - x_t}^2} +  (1+r_t)\eta_t^2\Exp{\norms{ \nabla{f}(x_t) - v_t}^2}.
}
Multiplying this inequality by $\frac{q_t}{2} > 0$ and adding the result to \eqref{eq:lm3_est5}, we finally get
\myeqn{ 
\begin{array}{ll}
\Exp{F(x_{t+1})} &\leq \Exp{F(x_t)} - \frac{q_t\eta_t^2}{2}\Exp{\norms{ \Grad_{\eta_t}(x_t)}^2} \vspace{1ex}\\
& +  {~} \frac{1}{2}\Big[\frac{\gamma_t}{c_t} +  (1+r_t)q_t\eta_t^2\Big] \Exp{\norms{\nabla{f}(x_t) - v_t}^2} \vspace{1ex}\\
&- {~} \frac{1}{2}\Big[\frac{2\gamma_t}{\eta_t} - L\gamma_t^2 -  \gamma_tc_t - q_t\left(1+\frac{1}{r_t}\right)\Big] \Exp{\norms{ \widehat{x}_{t+1} - x_t }^2}  - \frac{1}{2}\Exp{\tilde{\sigma}_t^2}.
\end{array}
}
Using the definition of $\theta_t$ and $\kappa_t$ from \eqref{eq:theta_kappa}, i.e.,:
\myeqn{
\theta_t:= \frac{\gamma_t}{c_t} + (1+r_t)q_t\eta_t^2~~~~\text{and}~~~~\kappa_t := \frac{2\gamma_t}{\eta_t} - L\gamma_t^2 - \gamma_tc_t - q_t\left(1 + \frac{1}{r_t}\right),
}
we can simplify this estimate as follows:
\myeq{eq:lm52_est10}{
\begin{array}{ll}
\Exp{F(x_{t+1})} &\leq \Exp{F(x_t)} - \frac{q_t\eta_t^2}{2}\Exp{\norms{ \Grad_{\eta_t}(x_t)}^2}  +  \frac{\theta_t}{2} \Exp{\norms{\nabla{f}(x_t) - v_t}^2}  \vspace{1ex}\\
& - {~}  \frac{\kappa_t}{2}\Exp{\norms{ \widehat{x}_{t+1} - x_t }^2}  - \frac{1}{2}\Exp{\tilde{\sigma}_t^2}.
\end{array}
}
This is exactly \eqref{eq:upper_bound_new0}.
\Eproof

\beforesubsec
\subsection{\bf The proof of Lemma~\ref{le:descent_pro}: Key estimate of Lyapunov function}\label{apdx:le:descent_pro}
\aftersubsec
From \eqref{eq:key_estimate10}, by taking the full expectation on the history $\Fc_{t+1}$ and using the $L$-average smoothness of $f$, we can show that
\myeqn{
{\!\!\!\!\!\!\!}\begin{array}{ll}
\Exp{\norms{v_{t+1} - \nabla{f}(x_{t+1})}^2} {\!\!\!}&\leq \beta_t^2\Exp{\norms{v_t - \nabla{f}(x_t)}^2} + \beta_t^2L^2\Exp{\norms{x_{t+1} - x_t}^2} + (1-\beta_t)^2\sigma_{t+1}^2 \vspace{1ex}\\
&= \beta_t^2\Exp{\norms{v_t - \nabla{f}(x_t)}^2} + \beta_t^2\gamma_t^2L^2\Exp{\norms{\widehat{x}_{t+1} - x_t}^2} + (1-\beta_t)^2\sigma_{t+1}^2,
\end{array}{\!\!\!\!\!\!\!}
}
where $\sigma_t^2 := \Exp{\norms{\nabla{f}_{\zeta_t}(x_t) - \nabla{f}(x_t)}^2}$.

Let $V$ be the Lyapunov  function defined by \eqref{eq:Lyapunov_func}.
Then, by multiplying the last inequality by $\frac{\alpha_{t+1}}{2} > 0$, adding the result to \eqref{eq:lm52_est10}, and then using this Lyapunov function we can show that
\myeq{eq:lm52_est12}{
\begin{array}{ll}
V(x_{t+1}) &\leq V(x_t) -  \frac{q_t\eta_t^2}{2}\Exp{\norms{ \Grad_{\eta_t}(x_t)}^2}  - \frac{1}{2}(\alpha_t - \beta_t^2\alpha_{t+1} - \theta_t)\Exp{\norms{v_t - \nabla{f}(x_t)}^2} \vspace{1.25ex}\\
&- {~} \frac{1}{2}(\kappa_t - \alpha_{t+1}\beta_t^2\gamma_t^2L^2)\Exp{\norms{\widehat{x}_{t+1} - x_t}^2} +  \frac{1}{2}(1-\beta_t)^2\alpha_{t+1}\sigma_{t+1}^2 - \frac{1}{2}\Exp{\tilde{\sigma}_t^2}.
\end{array}
}
Let us choose $\gamma_t$, $\eta_t$, and other parameters such that the conditions \eqref{eq:key_cond1001} hold, i.e.:
\myeqn{
\alpha_t - \beta_t^2\alpha_{t+1} - \theta_t \geq 0 ~~~\text{and}~~\kappa_t - \alpha_{t+1}\beta_t^2\gamma_t^2L^2 \geq 0.
}
In this case, \eqref{eq:lm52_est12} can be simplified as follows:
\myeqn{
V(x_{t+1}) \leq V(x_t) -  \frac{q_t\eta_t^2}{2}\Exp{\norms{ \Grad_{\eta_t}(x_t)}^2}  +  \frac{1}{2}\alpha_{t+1}(1-\beta_t)^2\sigma_{t+1}^2.
}
This proves \eqref{eq:key_bound1001a}.

Finally, summing up this inequality from $t := 0$ to $t := m$, we obtain
\myeqn{
\sum_{t=0}^m\frac{q_t\eta_t^2}{2}\Exp{\norms{ \Grad_{\eta_t}(x_t)}^2}  \leq V(x_0) - V(x_{m+1}) + \frac{1}{2}\sum_{t=0}^m\alpha_{t+1}(1-\beta_t)^2\sigma_{t+1}^2.
}
Note that $V(x_{m+1}) := \Exp{F(x_{m+1})} + \frac{\alpha_{m+1}}{2}\Exp{\norms{v_{m+1} - \nabla{f}(x_{m+1})}^2} \geq \Exp{F(x_{m+1})} \geq F^{\star}$ by Assumption~\ref{as:A0} and $V(x_0) = F(x_0) + \frac{\alpha_0}{2}\Exp{\norms{v_0 - \nabla{f}(x_0)}^2}$.
Using these estimates into the last inequality, we obtain the key estimate \eqref{eq:key_bound1001}.
\Eproof

\beforesubsec
\subsection{\bf The proof of Theorem~\ref{th:singe_loop_adapt_step}: The adaptive step-size case}\label{apdx:th:singe_loop_adapt_step}
\aftersubsec
Let $\sets{(x_t, \hat{x}_{t})}$ be generated by Algorithm \ref{alg:A1}.
Let us again choose  $c_t := L$, $r_t := 1$ and $q_t := \frac{L\gamma_t}{2}$ and fix $\eta_t := \eta  \in (0, \frac{1}{L})$ in Lemma~\ref{le:upper_bound_new} as done in Theorem~\ref{th:constant_stepsize_convergence}.
Then, from \eqref{eq:theta_kappa}, we have
\myeqn{ 
\theta_t := \frac{(1 + L^2\eta^2)\gamma_t}{L}~~~~~\text{and}~~~~~\kappa_t := \left(\frac{2}{\eta} - L\gamma_t - 2L\right)\gamma_t.
}
Using these parameters into  \eqref{eq:upper_bound_new0} and summing up the result from $t := 0$ to $t := m$, and then using \eqref{eq:vt_variance_bound_new} from Lemma~\ref{le:upper_bound_new}, we obtain 
\myeqn{ 
\begin{array}{lcl}
\Exp{F(x_{m+1})} &\leq & \Exp{F(x_0)} +\dfrac{L^2}{2}\displaystyle\sum_{t=0}^m \theta_t\sum_{i=0}^{t-1}\gamma_i^2\omega_{i,t}\Exp{\norms{\widehat{x}_{i+1} - x_{i}}^2}  \vspace{1ex}\\
&& - {~} \dfrac{1}{2}\displaystyle\sum_{t=0}^m\kappa_t \Exp{\Vert\widehat{x}_{t+1} - x_t\Vert^2}  - \dfrac{\eta^2L}{4}\displaystyle\sum_{t=0}^m \gamma_t\Exp{\Vert \Grad_{\eta}(x_t)\Vert^2} \vspace{1ex}\\
&& - {~} \displaystyle\sum_{t=0}^m\Exp{ \tilde{\sigma}_t} + \dfrac{1}{2}\displaystyle\sum_{t=0}^m\theta_t \omega_t \bar{\sigma}^2  +  \dfrac{1}{2}\displaystyle\sum_{t=0}^m \theta_t S_t,
\end{array}
}
where $\bar{\sigma}^2 := \Exp{\norms{v_0 - \nabla f(x_0)}^2} \ge 0$, $\tilde{\sigma}_t^2 := \dfrac{\gamma_t}{2}\norms{\nabla f(x_t) - v_t - L(\hat{x}_{t+1} - x_t)}^2 \ge 0$, and $\omega_{i,t}$, $\omega_t$, and $S_t$ are defined in Lemma \ref{le:upper_bound_new}. 

\noindent 
By ignoring the nonnegative term $\Exp{ \tilde{\sigma}_t^2}$, and using the expression of $\theta_t$ and $\kappa_t$ above, we can further estimate the last inequality as follows:
\myeq{eq:upper_bound_new}{
\begin{array}{lcl}
\Exp{F(x_{m+1})} &\leq & \Exp{F(x_0)} -  \frac{\eta^2L}{4}\displaystyle\sum_{t=0}^m\gamma_t\Exp{\Vert \Grad_{\eta}(x_t)\Vert^2} \vspace{1ex}\\
&& + ~\frac{(1+L^2\eta^2)\bar{\sigma}^2}{2L}{\displaystyle\sum_{t=0}^m\omega_t\gamma_t} + \frac{(1+L^2\eta^2)}{2L}{\displaystyle\sum_{t=0}^m\gamma_tS_t} + \frac{\Tc_m}{2},
\end{array}
}
where $\Tc_m$ is defined as follows:
\myeq{eq:T_m}{
{\!\!\!}\Tc_m := L(1+L^2\eta^2)\sum_{t=0}^m\gamma_t\sum_{i=0}^{t-1}\omega_{i,t}\gamma_i^2\Exp{\norms{\widehat{x}_{i+1} - x_i}^2} - \sum_{t=0}^m\gamma_t\left(\frac{2}{\eta} - 2L - L\gamma_t\right)\Exp{\norms{\widehat{x}_{i+1} - x_i}^2}.{\!\!\!}
}
Now, with the choice of $\beta_t = \beta := 1- \frac{1}{\sqrt{\tilde{b}(m+1)}} \in (0, 1)$, we can easily show that $\omega_t = \beta^{2t}$, $\omega_{i,t} = \beta^{2(t-i)}$, and  $s_t := \big(\prod_{j=i+2}^{t}\beta_{j-1}^2\big)(1-\beta_i)^2 =  (1-\beta)^2\Big[\frac{1-\beta^{2t}}{1-\beta^2}\Big] < \frac{1-\beta}{1+\beta}$ due to Lemma \ref{le:upper_bound_new}. 

Let $w_i^2 := \Exp{\norms{\widehat{x}_{i+1} - x_i}^2}$.
To bound the quantity $\Tc_m$ defined by \eqref{eq:T_m}, we note that
\myeqn{
\begin{array}{lcl}
\displaystyle\sum_{t=1}^m\gamma_t\sum_{i=0}^{t-1}\beta^{2(t-i)}\gamma_i^2w_i^2 &=& \beta^2\gamma_0^2\big[\gamma_1 + \beta^2\gamma_2 + \cdots + \beta^{2(m-1)}\gamma_m\big]w_0^2 \vspace{1ex}\\
&& + {~} \beta^2\gamma_1^2\big[\gamma_2 + \beta^2\gamma_3 + \cdots + \beta^{2(m-2)}\gamma_{m}\big]w_1^2 + \cdots \vspace{1.5ex}\\ 
&& + {~} \beta^2\gamma_{m-2}^2\big[\gamma_{m-1} + \beta^2\gamma_{m}\big]w_{m-2}^2 + \beta^2\gamma_{m-1}^2\gamma_m w_{m-1}^2. 
\end{array}
}
Using $\delta := \frac{2}{\eta} - 2L$, we can write $\Tc_m$ from  \eqref{eq:T_m} as
\myeqn{ 
\begin{array}{lcl}
\Tc_m &=&  \gamma_0\Big[ L(1+L^2\eta^2) \beta^2\gamma_0\big[\gamma_1 + \beta^2\gamma_2 + \cdots + \beta^{2(m-1)}\gamma_m\big]  - (\delta - L\gamma_0)\Big]w_0^2 \vspace{1ex}\\
&& + {~}  \gamma_1\Big[ L(1+L^2\eta^2) \beta^2\gamma_1\big[\gamma_2 + \beta^2\gamma_3 + \cdots + \beta^{2(m-2)}\gamma_{m}\big] - (\delta - L\gamma_1)\Big]w_1^2 + \cdots \vspace{1ex}\\
&& + {~}  \gamma_{m-1}\Big[ L(1+L^2\eta^2) \beta^2\gamma_{m-1}\gamma_m - (1 - L\gamma_{m-1})\Big]w_{m-1}^2 - \gamma_m(\delta - L\gamma_m)w_{m}^2.
\end{array}
}
To guarantee $\Tc_m \leq 0$, from the last expression of $\Tc_m$, we can impose the following condition:
\myeq{eq:cond_of_eta3}{
\left\{\begin{array}{lcl}
L(1+L^2\eta^2) \beta^2\gamma_0\big[\gamma_1 + \beta^2\gamma_2 + \cdots + \beta^{2(m-1)}\gamma_m\big]  - (\delta  - L\eta_0) &=& 0\vspace{1ex}\\
L(1+L^2\eta^2) \beta^2\gamma_1\big[\gamma_2 + \beta^2\gamma_3 + \cdots + \beta^{2(m-2)}\gamma_{m}\big] - (\delta  - L\eta_1)&=& 0\vspace{1ex}\\
\cdots  \cdots &\cdots &\vspace{1ex}\\
L(1+L^2\eta^2) \beta^2\gamma_{m-1}\gamma_m - (\delta  - L\gamma_{m-1}) &=&  0\vspace{1ex}\\
- (1 - L\eta_{m}) &= & 0.
\end{array}\right.
}
It is obvious to show that the condition \eqref{eq:cond_of_eta3} leads to the following update of $\gamma_t$:
\myeqn{ 
\gamma_m := \frac{\delta}{L}~~~\text{and}~~~\gamma_t := \frac{\delta}{L + L(1+L^2\eta^2)\big[\beta^2\gamma_{t+1} + \beta^4\gamma_{t+2} + \cdots + \beta^{2(m-t)}\gamma_m\big]},~~~t=0,\cdots, m-1,
}
which is exactly \eqref{eq:update_of_eta_t}.

\vspace{1ex}
\noindent{(a)}~
Since $\beta = 1 - \frac{1}{[\tilde{b}(m+1)]^{1/2}}$, we have 
\myeqn{
\frac{1}{[\tilde{b}(m+1)]^{1/2}} = 1 - \beta \leq 1 -  \beta^2 \leq \frac{2}{[\tilde{b}(m+1)]^{1/2}}.
}
Moreover, since $\eta \in (0, \frac{1}{L})$, with $\epsilon := \frac{1+L^2\eta^2}{L}$, $\delta := \frac{2}{\eta}-2L$, and $\omega := \beta^2 \in (0,1)$, using the last inequalities, we can easily show that 
\myeq{eq:deno_est1}{
\sqrt{1-\omega} + \sqrt{1 - \omega + 4\delta\omega\epsilon } = \sqrt{1-\beta^2} + \sqrt{1 - \beta^2 + \frac{4\delta \beta^2(1+L^2\eta^2)}{L} } \leq 2\sqrt{2}\left( \frac{1}{[\tilde{b}(m+1)]^{1/4}} + \sqrt{\frac{\delta}{L}}\right).
}
Using \eqref{eq:deno_est1}, $\sqrt{1-\omega} = \sqrt{1-\beta^2} \geq \frac{1}{(\tilde{b}(m+1))^{1/4}}$, and $\epsilon = \frac{1+L^2\eta^2}{L}$ into \eqref{eq:stepsize_pros} of Lemma~\ref{le:adaptive_step_size}, we can derive 
\myeq{eq:lower_bound_of_Sigma}{
\Sigma_m := \sum_{t=0}^m\gamma_t \geq  \frac{\delta(m+1)}{2\sqrt{2}\left(L + \sqrt{L\delta}[\tilde{b}(m+1)]^{1/4}\right)}.
}
Next, since $\omega_t = \beta^{2t}$, by Chebyshev's sum inequality, we have
\myeqn{
\sum_{t=0}^m\omega_t\gamma_t = \sum_{t=0}^m\beta^{2t}\gamma_t \leq \frac{\Sigma_m}{(m+1)}(1 + \beta^2 + \cdots + \beta^{2m}) \leq  \frac{\Sigma_m}{(m+1)(1-\beta^2)}.
}
Utilizing this estimate, $\bar{\sigma}^2 := \Exp{\norms{v_0 - \nabla{f}(x_0)}^2}  \leq \frac{\sigma^2}{\tilde{b}}$, and $S_t \leq \sigma^2 s_t \leq \frac{(1-\beta)\sigma^2}{1+\beta}$  into \eqref{eq:upper_bound_new}, and noting that $\Tc_m \leq 0$, we can further upper bound it as
\myeqn{ 
\displaystyle\frac{\eta^2L}{4}\displaystyle\sum_{t=0}^m\gamma_t\Exp{\norms{\Grad_{\eta}(x_t)}^2} \leq F(x_0) -  \Exp{F(x_{m+1})} + \frac{(1+L^2\eta^2)\sigma^2\Sigma_m}{2L(1-\beta^2)(m+1)\tilde{b}} +  \frac{(1+L^2\eta^2)(1-\beta)\sigma^2\Sigma_m}{2L(1+\beta)}.
}
By  Assumption~\ref{as:A0}, we have $\Exp{F(x_{m+1})} \geq F^{\star}$.
Substituting  this bound into the last estimate and then multiplying the result by $\frac{4}{L\eta^2\Sigma_m}$ we obtain
\myeqn{
\displaystyle\frac{1}{\Sigma_m}\displaystyle\sum_{t=0}^m\gamma_t\Exp{\norms{\Grad_{\eta}(x_t)}^2} \leq \frac{4}{L\eta^2\Sigma_m}[F(x_0) -  F^{\star}]  + \frac{2\sigma^2(1+L^2\eta^2)}{L^2\eta^2(1+\beta)}\left[\frac{1}{\tilde{b}(m+1)(1-\beta)} +  (1-\beta)\right].
}
Since  $\beta = 1- \frac{1}{\tilde{b}^{1/2}(m+1)^{1/2}}$, we have $\frac{1}{\tilde{b}(m+1)(1-\beta)} +  (1-\beta) = \frac{2}{\tilde{b}^{1/2}(m+1)^{1/2}}$.
Utilizing this expression,  \eqref{eq:lower_bound_of_Sigma}, $1+\eta^2L^2 \leq 2$, and $\beta\in [0, 1]$, we can further upper bound the last estimate as
\myeq{eq:key_estimate_301}{
\displaystyle\frac{1}{\Sigma_m}\displaystyle\sum_{t=0}^m\gamma_t\Exp{\norms{\Grad_{\eta}(x_t)}^2} \leq \frac{8\sqrt{2}\big(L + \sqrt{\delta L}[\tilde{b}(m+1)]^{1/4}\big)}{L\eta^2\delta(m+1)}\left[F(x_0) -  F^{\star}\right]  +  \frac{8\sigma^2}{L^2\eta^2[\tilde{b}(m+1)]^{1/2}}.
}
In addition, due to the choice of $\overline{x}_m \sim\Unip{\pb}{\sets{x_t}_{t=0}^m}$, we have $\Exp{\norms{\Grad_{\eta}(\overline{x}_m)}^2} = \displaystyle\frac{1}{\Sigma_m}\displaystyle\sum_{t=0}^m\gamma_t\Exp{\norms{\Grad_{\eta}(x_t)}^2}$.
Combining this expression and \eqref{eq:key_estimate_301}, we obtain \eqref{eq:adaptive_key_est}. 

\vspace{1ex}
\noindent{(b)}~
Let us choose $\tilde{b} := c_1^2(m+1)^{1/3}$ for some constant $c_1 > 0$.
Since $\beta = 1 - \frac{1}{[\tilde{b}(m+1)]^{1/2}}$, to guarantee $\beta \geq 0$, we need to impose $c_1 \geq \frac{1}{(m+1)^{2/3}}$.
With this choice of $\tilde{b}$, \eqref{eq:adaptive_key_est} reduces to
\myeqn{
\Exp{\norms{\Grad_{\eta}(\overline{x}_m)}^2} \leq \frac{8}{L^2\eta^2(m+1)^{2/3}}\left[\frac{\sqrt{2}L\big( L + \sqrt{c_1L\delta}\big)}{\delta}\big[F(x_0) - F^{\star}\big] + \frac{\sigma^2}{c_1}\right].
}
Let us denote by $\Delta_0 := \frac{8}{L^2\eta^2}\left[\frac{\sqrt{2}L( L + \sqrt{c_1L\delta})}{\delta}\big[F(x_0) - F^{\star}\big] + \frac{\sigma^2}{c_1}\right]$.
Then, similar to the proof of Theorem~\ref{th:constant_stepsize_convergence}, we can show that  the number of iterations $m$ is at most $m := \left\lfloor \frac{\Delta_0^{3/2}}{\varepsilon^3} \right\rfloor$, and the total number $\Tc_m$ of  stochastic gradient evaluations $\nabla{f}_{\xi}(x_t)$ is at most $\Tc_m := \left\lfloor \frac{c_1^2\Delta_0^{1/2}}{\varepsilon} + \frac{3\Delta_0^{3/2}}{\varepsilon^3}\right\rfloor$.
\Eproof

\beforesubsec
\subsection{\bf The proof of Theorem~\ref{th:double_loop_convergence}: The restarting variant}\label{apdx:th:double_loop_convergence}
\aftersubsec
\noindent{(a)}~
Since we use the adaptive variant of Algorithm~\ref{alg:A1} as stated in Theorem~\ref{th:singe_loop_adapt_step} for the inner loop of Algorithm~\ref{alg:A2}, from  \eqref{eq:key_estimate_301}, we can see that at each stage $s$, the following estimate holds
\myeq{eq:single_loop2}{
\displaystyle\frac{1}{\Sigma_m}\displaystyle\sum_{t=0}^m\gamma_t\Exp{\norms{\Grad_{\eta}(x_t^{(s)})}^2} \leq  \frac{8\sqrt{2}\tilde{b}^{1/4}\left(L + \sqrt{L\delta}\right)}{L\eta^2\delta(m+1)^{3/4}}\Exp{F(x_0^{(s)}) -  F(x_{m+1}^{(s)})}  +  \frac{8\sigma^2}{L^2\eta^2[\tilde{b}(m+1)]^{1/2}}.
}
where we use the superscript ``$^{(s)}$'' to indicate the stage $s$ in Algorithm~\ref{alg:A2}.
Summing up this inequality from $s := 1$ to $s := S$, and then multiplying the result by $\frac{1}{S}$ and using $\Exp{F(x_{m+1}^{(S)})} \geq F^{\star} > -\infty$, and $\Exp{\norms{\Grad_{\eta}(\overline{x}_T)}^2} = \dfrac{1}{S\Sigma_m}\displaystyle\sum_{s=1}^S\sum_{t=0}^m\Exp{\norms{\Grad_{\eta}(x_t^{(s)})}^2}$, we get \eqref{eq:double_loop_est}, i.e.:
\myeq{eq:est5d_1}{
\begin{array}{lcl}
\Exp{\norms{\Grad_{\eta}(\overline{x}_T)}^2} &= & \dfrac{1}{S\Sigma_m}\displaystyle\sum_{s=1}^S\sum_{t=0}^m\Exp{\norms{\Grad_{\eta}(x_t^{(s)})}^2} \vspace{1ex}\\
& \leq & \dfrac{8\sqrt{2}\tilde{b}^{1/4}\big(L + \sqrt{L\delta}\big)}{L\delta\eta^2S (m+1)^{3/4}} \big[F(\overline{x}^{(0)}) - F^{\star}\big] + \dfrac{8\sigma^2}{L^2\eta^2[\tilde{b}(m+1)]^{1/2}}.
\end{array}
}
\noindent{(b)}~Let $\Delta_F := F(\overline{x}^{(0)}) - F^{\star} > 0$ and choose $\tilde{b} := c_1^2(m + 1)$ for some constant $c_1 > 0$.
Since $\beta = 1 - \frac{1}{[\tilde{b}(m+1)]^{1/2}} \in (0, 1)$, we need to choose $c_1$ such that $c_1 \geq \frac{1}{m+1}$.

Now, for any tolerance $\varepsilon > 0$, to guarantee $\Exp{\norms{\Grad_{\eta}(\overline{x}_T)}^2} \leq \varepsilon^2$, from \eqref{eq:est5d_1}, we require
\myeqn{
\frac{8\sqrt{2}\tilde{b}^{1/4}\big(L + \sqrt{L\delta}\big)\Delta_F}{L\delta\eta^2S (m+1)^{3/4}} + \frac{8\sigma^2}{L^2\eta^2[\tilde{b}(m+1)]^{1/2}} 
= \frac{8\sqrt{2c_1} \big(L + \sqrt{L\delta}\big)\Delta_F}{L\delta\eta^2S (m+1)^{1/2}}  + \frac{8\sigma^2}{L^2\eta^2c_1(m+1)} \leq \varepsilon^2.
}
Let us break this inequality into two parts as
\myeqn{
\frac{8\sqrt{2c_1}\big(L + \sqrt{L\delta}\big)\Delta_F}{L\delta\eta^2S (m+1)^{1/2}} = \frac{\varepsilon^2}{2} ~~~\text{and}~~~ \frac{8\sigma^2}{L^2\eta^2c_1(m+1)} \leq \frac{\varepsilon^2}{2}.
}
Then, we have 
\myeqn{
S = \frac{16\sqrt{2c_1}\big(L + \sqrt{L\delta}\big)\Delta_F}{L\delta\eta^2(m+1)^{1/2}\varepsilon^2} ~~~\text{and}~~~ m+1 \geq \frac{16\sigma^2}{L^2\eta^2c_1\varepsilon^2}.
}
Let us choose $m+1 = \frac{16}{L^2\eta^2c_1}\cdot\frac{\max\set{1,\sigma^2}}{\varepsilon^2}$.
Then, $m + 1 \geq \frac{16}{L^2\eta^2c_1\varepsilon^2}$, and we can set
\myeqn{
S := \frac{16\sqrt{2c_1}\big(L + \sqrt{L\delta}\big)\Delta_F}{L\delta\eta^2\varepsilon^2} \cdot \frac{L\eta \sqrt{c_1}\varepsilon}{4}  = \frac{4\sqrt{2}c_1\big(L + \sqrt{L\delta}\big)\Delta_F}{\delta\eta  \varepsilon}.
}
This leads to \eqref{eq:S_iterations}.
Moreover, we can also show that
\myeqn{
(m+1)S = \frac{16\sqrt{2c_1}(L + \sqrt{L\delta})\Delta_F}{L\delta\eta^2\varepsilon^2}\sqrt{m+1} 
= \frac{64\sqrt{2}(L + \sqrt{L\delta})\Delta_F}{L^2\eta^3\delta} \cdot\frac{\max\set{1,\sigma}}{\varepsilon^3}.
}
Consequently,  the total number of stochastic gradient evaluations $\nabla{f}_{\xi}(x_t)$ is at most
\myeqn{
\begin{array}{lcl}
\Tc_{\nabla{f}} &:=&  \left[ \tilde{b} + 3(m+1) \right]S = (c_1^2 + 3)(m+1)S   = 64\sqrt{2}(c_1^2 + 3)\frac{(L + \sqrt{L\delta})\Delta_F}{L^2\eta^3\delta} \cdot\frac{\max\set{1,\sigma}}{\varepsilon^3} \vspace{1ex}\\
&= & \BigO{\max\set{\sigma, 1} \cdot \frac{\Delta_F}{\varepsilon^3}}. 
\end{array}
}
Since we choose $\tilde{b} := \frac{16c_1}{L^2\eta^2}\cdot\frac{\max\set{1,\sigma^2}}{\varepsilon^2}$, the final complexity is $\BigO{  \frac{\max\set{1,\sigma^2}}{\varepsilon^2} + \frac{\max\set{1,\sigma}}{\varepsilon^3}}$, where other constants independent of $\sigma$ and $\varepsilon$ are hidden.
The total number of proximal operators $\prox_{\eta\psi}$ is at most 
\myeqn{
\Tc_{\prox} := S(m+1) = \frac{64\sqrt{2}(L + \sqrt{L\delta})\Delta_F}{L^2\eta^3\delta} \cdot\frac{\max\set{1,\sigma}}{\varepsilon^3}
=  \BigO{\max\set{\sigma, 1} \cdot \frac{\Delta_F}{\varepsilon^3}}.
}
The estimate \eqref{eq:Toc_double_loop} follows from the bound of $\Tc_{\nabla{f}}$ above and the choice of $\tilde{b}$.
\Eproof

\beforesec
\section{The Proof of Technical Results in Section~\ref{sec:sgd_mini_batch}: The Mini-batch Case}\label{apdx:sec:sgd_mini_batch}
\aftersec
This appendix presents the full proof of the results in Section~\ref{sec:sgd_mini_batch} for the mini-batch case.

\beforesubsec
\subsection{\bf The proof of Theorem~\ref{th:mini_batch_constant_stepsize_convergence}: The single-loop variant}\label{apdx:th:mini_batch_constant_stepsize_convergence}
\aftersubsec
Using \eqref{eq:key_pro_of_vhat_t} from Lemma~\ref{le:key_pro_of_vhat_t} with $G := \nabla{f}$ and taking full expectation and using a constant weight $\beta_t := \beta \in (0, 1)$ and $b_t := b \in \Nbb_{+}$,  we have
\myeqn{ 
\begin{array}{lcl}
\Exp{\norms{\hat{v}_{t+1} -  \nabla{f}(x_{t+1})}^2} &\leq&  \beta^2\Exp{\norms{\hat{v}_{t}  - \nabla{f}(x_{t})}^2}  + \rho\beta^2\Exp{\norms{\nabla{f}_{\xi}(x_{t+1}) - \nabla{f}_{\xi}(x_{t})}^2} \vspace{1ex}\\
&& + {~} (1-\beta)^2\Exp{\norms{u_{t+1} - \nabla{f}(x_{t+1})}^2},
\end{array}
}
where $\rho := \frac{1}{b}$ since we solve \eqref{eq:ncvx_prob}.

Since $\Exp{\norms{\nabla{f}_{\xi}(x_{t+1}) - \nabla{f}_{\xi}(x_t)}^2} \leq L^2\Exp{\norms{x_{t+1} - x_t}^2} \leq L^2\gamma_{t}^2\Exp{\norms{\widehat{x}_{t+1} - x_t}^2}$ by Assumption~\ref{as:A1} and $\Exp{\norms{u_{t+1} - \nabla{f}(x_{t+1})}^2} \leq \frac{\sigma^2}{\hat{b}}$ by Assumption~\ref{as:A1b} and \cite[Lemma 2]{Pham2019}, the last estimate leads to 
\myeq{eq:th61_proof1}{
\Exp{\norms{\hat{v}_{t+1} -  \nabla{f}(x_{t+1})}^2} \leq  \beta^2\Exp{\norms{\hat{v}_t  - \nabla{f}(x_{t})}^2}  + \frac{\beta^2\gamma_t^2L^2}{b}\Exp{\norms{\widehat{x}_{t+1} - x_t}^2}  +  \frac{(1-\beta)^2\sigma^2}{\hat{b}}.
}
Next,  let us choose $\eta_t := \eta > 0$, $\gamma_t := \gamma > 0$, $c_t := L$, $r_t := 1$, and $q_t := \frac{L\gamma}{2} > 0$ in Lemma~\ref{le:upper_bound_new}.
Then, we have $\theta_t = \theta = \frac{(1 + L^2\eta^2)\gamma}{L}  > 0$ and $\kappa_t = \kappa = \left( \frac{2}{\eta} - L\gamma - 2L\right)\gamma > 0$.
Using these values into  \eqref{eq:upper_bound_new0}, we obtain
\myeqn{
\Exp{F(x_{t+1})} \leq  \Exp{F(x_t)} - \dfrac{\gamma\eta^2L}{4}\Exp{\norms{ \Grad_{\eta}(x_t)}^2}  +  \dfrac{\theta}{2} \Exp{\norms{\nabla{f}(x_t) - \hat{v}_t}^2}   -  \dfrac{\kappa}{2}\Exp{\norms{ \widehat{x}_{t+1} - x_t }^2}.
}
Multiplying \eqref{eq:th61_proof1} by $\frac{\alpha}{2}$ for some $\alpha > 0$, and adding the result to the above estimate, we obtain
\myeqn{
\begin{array}{lcl}
\Exp{F(x_{t+1})} + \dfrac{\alpha}{2}\Exp{\norms{\hat{v}_{t+1} -  \nabla{f}(x_{t+1})}^2} &\leq & \Exp{F(x_t)} + \dfrac{(\alpha\beta^2 + \theta)}{2}\Exp{\norms{\hat{v}_{t}  - \nabla{f}(x_{t})}^2}  \vspace{1ex}\\
&&- {~} \dfrac{\gamma\eta^2L}{4}\Exp{\norms{ \Grad_{\eta}(x_t)}^2}  +  \dfrac{\alpha(1-\beta)^2\sigma^2}{2\hat{b}}\vspace{1ex}\\
&& - {~}  \dfrac{1}{2}\left(\kappa - \frac{\alpha\beta^2\gamma^2L^2}{b}\right)\Exp{\norms{x_t - x_{t-1}}^2}.
\end{array}
}
Using the Lyapunov function $V$ defined by \eqref{eq:Lyapunov_func}, the last estimate leads to 
\myeqn{
\begin{array}{lcl}
V(x_{t+1}) &\leq & V(x_t) - \dfrac{\gamma\eta^2L}{4}\Exp{\norms{ \Grad_{\eta}(x_t)}^2}  +  \dfrac{\alpha(1-\beta)^2\sigma^2}{2\hat{b}}\vspace{1ex}\\
&& - {~}  \dfrac{1}{2}\left(\kappa - \dfrac{\alpha\beta^2\gamma^2L^2}{b}\right)\Exp{\norms{x_t - x_{t-1}}^2} - \dfrac{1}{2}\left[\alpha(1-\beta^2) - \theta\right]\Exp{\norms{\hat{v}_{t}  - \nabla{f}(x_{t})}^2}.
\end{array}
}
If we impose the following conditions
\myeq{eq:para_cond100}{
\kappa = \left( \frac{2}{\eta} -  L\gamma - 2L\right)\gamma \geq \frac{\alpha\beta^2\gamma^2L^2}{b} ~~~~\text{and}~~~~\theta = \frac{(1 + L^2\eta^2)}{L}\gamma \leq \alpha(1-\beta^2),
}
then we get from the last inequality that
\myeq{eq:descent_prof100}{
V(x_{t+1}) \leq V(x_t) - \frac{\gamma\eta^2L}{4}\Exp{\norms{ \Grad_{\eta}(x_t)}^2}  +  \frac{\alpha(1-\beta)^2\sigma^2}{2\hat{b}}.
}
The conditions \eqref{eq:para_cond100} can be simplified as
\myeq{eq:para_cond100b}{
\frac{2}{\eta} - 2L - L\gamma \geq \frac{\alpha\gamma \beta^2L^2}{b}~~~~~~\text{and}~~~~~~\frac{(1 + L^2\eta^2)}{L}\gamma \leq \alpha(1 - \beta^2).
}
Moreover, by induction, $V(x_{m+1}) \geq F^{\star}$, and $V(x_0) := F(x_0) + \frac{\alpha}{2}\Exp{\norms{\hat{v}_0 - \nabla{f}(x_0)}^2} \leq F(x_0) + \frac{\alpha\sigma^2}{2\tilde{b}}$, we can further derive from \eqref{eq:descent_prof100} that
\myeq{eq:descent_prof101}{
\frac{1}{m+1}\sum_{t=0}^m\Exp{\norms{ \Grad_{\eta}(x_t)}^2} \leq \frac{4}{L\eta^2\gamma(m+1)}\left[F(x_0) - F^{\star}\right] + \frac{2\alpha\sigma^2}{L\eta^2\gamma}\left[\frac{1}{\tilde{b}(m+1)} + \frac{(1-\beta)^2}{\hat{b}}\right].
}
By minimizing the last term on the right-hand side of \eqref{eq:descent_prof101} w.r.t. $\beta\in [0, 1]$, we get $\beta := 1 - \frac{\hat{b}^{1/2}}{[\tilde{b}(m+1)]^{1/2}}$.
Clearly, with this choice of $\beta$ if $1 \leq \hat{b}  \leq \tilde{b}(m+1)$, then  $\beta \in [0, 1)$.

\vspace{1ex}
\noindent{(a)}~
Next, we update $\eta := \frac{2}{L(3 + \gamma)}$.
Then, since $\gamma \in [0, 1]$ we have $\frac{1}{2L} \leq \eta \leq \frac{2}{3L}$.
Moreover, we have $\frac{2}{\eta} - 2L - L\gamma = L$ and $\frac{1 + L^2\eta^2}{L} \leq \frac{13}{9L}$.
In addition, noting that since $\beta \in [0, 1)$, we have $1-\beta^2 \geq 1 - \beta =  \frac{\hat{b}^{1/2}}{[\tilde{b}(m+1)]^{1/2}}$.
Consequently, the second condition of \eqref{eq:para_cond100b} holds if we choose $\gamma$ as
\myeqn{
0 < \gamma \leq \bar{\gamma} := \frac{9L\alpha\hat{b}^{1/2}}{13\tilde{b}^{1/2}(m+1)^{1/2}}.
}
Since $\beta \in [0, 1]$, the first condition of \eqref{eq:para_cond100b} holds if we choose $0 < \gamma \leq \bar{\gamma} := \frac{b}{L\alpha}$.
Combining both conditions on $\gamma$, we get $\frac{b}{L\alpha} = \frac{9L\alpha\hat{b}^{1/2}}{13\tilde{b}^{1/2}(m+1)^{1/2}}$, leading to $\alpha := \frac{\sqrt{13}\tilde{b}^{1/4}b^{1/2}}{3L\hat{b}^{1/4}(m+1)^{1/4}}$.
Therefore, we can update $\gamma$ as
\myeqn{
\gamma := \frac{3c_0\hat{b}^{1/4}b^{1/2}}{\sqrt{13}[\tilde{b}(m+1)]^{1/4}},
}
for some $c_0 > 0$.
Since $1 \leq \hat{b}  \leq \tilde{b}(m+1)$, we have $\gamma \leq \frac{3c_0b^{1/2}}{\sqrt{13}}$.
If we choose  $0 < c_0 \leq \frac{\sqrt{13}}{3b^{1/2}}$, then $\gamma \in (0, 1]$.
Consequently, we obtain \eqref{eq:constant_step_sizes}.

\vspace{1ex}
\noindent{(b)}~
Now, we note that the choice of $\alpha$ and $\gamma$ also implies that
\myeqn{
\frac{\alpha}{\gamma} =  \frac{13\tilde{b}^{1/2}(m+1)^{1/2}}{9L\hat{b}^{1/2}}~~~~~\text{and}~~~~~\frac{1}{\gamma} = \frac{\sqrt{13}\tilde{b}^{1/4}(m+1)^{1/4}}{3c_0\hat{b}^{1/4}b^{1/2}}. 
}
In addition, since $\overline{x}_m\sim\Uni{\set{x_t}_{t=0}^m}$, we have $\Exp{\norms{ \Grad_{\eta}(\overline{x}_m)}^2}  = \frac{1}{m+1}\sum_{t=0}^m\Exp{\norms{ \Grad_{\eta}(x_t)}^2}$.
Using these expressions and $L^2\eta^2 \geq \frac{1}{4}$ into \eqref{eq:descent_prof101}, we finally get
\myeqn{
\Exp{\norms{ \Grad_{\eta}(\overline{x}_m)}^2} \leq \frac{16\sqrt{13}L \tilde{b}^{1/4}}{3c_0\hat{b}^{1/4}b^{1/2}(m+1)^{3/4}} \left[F(x_0) - F^{\star}\right] + \frac{208\sigma^2}{9\hat{b}^{1/2}\tilde{b}^{1/2}(m+1)^{1/2}},
}
which proves \eqref{eq:th61_main_estimate}.

Let us choose $b = \hat{b} \in \Nbb_{+}$ and $\tilde{b} := c_1^2[b(m+1)]^{1/3}$ for some $c_1 > 0$. 
Then \eqref{eq:th61_main_estimate} reduces to 
\myeqn{
\Exp{\norms{ \Grad_{\eta}(\overline{x}_m)}^2} \leq \frac{16}{3[b(m+1)]^{2/3}}\left[\frac{\sqrt{13 c_1}L}{c_0} \left[F(x_0) - F^{\star}\right] + \frac{13\sigma^2}{3c_1}\right].
}
Denote $\Delta_0 := \frac{16}{3}\left[\frac{\sqrt{13 c_1}L}{c_0} \left[F(x_0) - F^{\star}\right] + \frac{13\sigma^2}{3c_1}\right]$.
For any tolerance $\varepsilon > 0$, to guarantee $\Exp{\norms{ \Grad_{\eta}(\overline{x}_m)}^2} \leq\varepsilon^2$, we need to impose $\frac{\Delta_0}{[b(m+1)]^{2/3}} = \varepsilon^2$.
This implies $b(m+1) = \frac{\Delta_0^{3/2}}{\varepsilon^3}$, which also leads to $m+1 =  \frac{\Delta_0^{3/2}}{b\varepsilon^3}$.
Therefore, the maximum number of iterations is at most $m :=  \left\lfloor\frac{\Delta_0^{3/2}}{b\varepsilon^3}\right\rfloor$.
This is also the number of proximal operations $\prox_{\eta\psi}$.

The number of stochastic gradient evaluations $\nabla{f_{\xi}}(x_t)$ is at most $\Tc_m := \tilde{b} + 3(m+1)b = \frac{c_1^2\Delta_0^{1/2}}{\varepsilon} + \frac{3\Delta_0^{3/2}}{\varepsilon^3}$.
Finally,  since $1 \leq b = \hat{b}  \leq \tilde{b}(m+1) = c_1^2b^{1/3}(m+1)^{4/3}$, we have $b \leq c_1^3(m+1)^2$, which is equivalent $c_1\geq\frac{b^{1/3}}{(m+1)^{2/3}}$.
In addition, since $\tilde{b} := c_1^2[b(m+1)]^{1/3}$ and $b=\hat{b}$, we have $\gamma := \frac{3c_0b^{2/3}}{\sqrt{13c_1}(m+1)^{1/3}}$.
\Eproof

\beforesubsec
\subsection{\bf The proof of Theorem~\ref{th:double_loop_convergence_minibatch}: The restarting mini-batch variant}\label{apdx:th:double_loop_convergence_minibatch}
\aftersubsec
(a)~Similar to the proof of Theorem~\ref{th:singe_loop_adapt_step}, summing up \eqref{eq:upper_bound_new0} from $t := 0$ to $t := m$ and using \eqref{eq:vt_variance_bound_new_batch} with $\rho := \frac{1}{b}$ and $\hat{\rho} := \frac{1}{\hat{b}}$ from Lemma~\ref{le:upper_bound_new_batch}, we obtain
\myeq{eq:co51_est1}{
\begin{array}{lcl}
\Exp{F(x_{m+1}^{(s)})} &\leq& \Exp{F(x_0^{(s)})} +\dfrac{L^2}{2b}\displaystyle\sum_{t=0}^m \theta_t\sum_{i=0}^{t-1}\gamma_i^2\omega_{i,t}\Exp{\norms{\hat{x}_{i+1}^{(s)} - x_{i}^{(s)}}^2}  \vspace{1ex}\\
&& - {~} \dfrac{1}{2}\displaystyle\sum_{t=0}^m\kappa_t \Exp{\Vert\widehat{x}_{t+1}^{(s)} - x_t^{(s)}\Vert^2}  - \displaystyle\sum_{t=0}^m\dfrac{\gamma_t\eta^2}{2}\Exp{\Vert \Grad_{\eta}(x_t^{(s)})\Vert^2} \vspace{1ex}\\
&& +{~} \dfrac{1}{2}\displaystyle\sum_{t=0}^m\theta_t \omega_t\Exp{\norms{\hat{v}_0^{(s)} - \nabla f(x_0^{(s)})}^2}  +  \dfrac{1}{2\hat{b}}\displaystyle\sum_{t=0}^m \theta_t S_t,
\end{array}
}
where $\gamma_t$, $\eta$, $\kappa_t$, $\theta_t$, $\omega_{i,t}$, $\omega_t$, and $S_t$ are defined in Lemma~\ref{le:upper_bound_new}.

Let us fix $c_t := L$,  $r_t := 1$, $q_t := \frac{L\gamma_t}{2}$, and $\beta_t := \beta \in [0, 1]$.
Then $\theta_t  = \frac{(1 + L^2\eta^2)}{L}\gamma_t$ and $\kappa_t =  \gamma_t\left(\frac{2}{\eta} - 2L - L\gamma_t\right)$ as before.
Moreover, $\omega_t = \beta^{2t}$, $\omega_{i,t} = \beta^{2(t-i)}$, and  $s_t =  (1-\beta)^2\Big[\frac{1-\beta^{2t}}{1-\beta^2}\Big] < \frac{1-\beta}{1+\beta}$ due to Lemma \ref{le:upper_bound_new}, and $\Exp{\norms{\hat{v}_0^{(s)} - \nabla f(x_0^{(s)})}^2} \leq \frac{\sigma^2}{\tilde{b}}$.

Using this configuration and noting that $\overline{x}^{(s)} = x_{m+1}^{(s)}$ and $\overline{x}^{(s-1)} = x^{(s)}_0$, following the same argument as \eqref{eq:key_estimate_301}, \eqref{eq:co51_est1} reduces to 
\myeq{eq:co51_est2}{
\begin{array}{lcl}
\Exp{F(\overline{x}^{(s)})} &\leq& \Exp{F(\overline{x}^{(s-1)})} -  \frac{L\eta^2}{4}\displaystyle\sum_{t=0}^m\gamma_t\Exp{\Vert \Grad_{\eta}(x_t^{(s)})\Vert^2} \vspace{1ex}\\
&&+ ~\frac{(1 + L^2\eta^2)\sigma^2}{2L(1+\beta)} \left[\frac{1}{\tilde{b}(m+1)(1-\beta)}  + \frac{(1-\beta)}{\hat{b}}\right]\Sigma_m + \frac{\widehat{\Tc}_m}{2},
\end{array}
}
where $\widehat{\Tc}_m$ is defined as follows:
\myeq{eq:T_m_hat}{
\begin{array}{lcl}
\widehat{\Tc}_m &:=& \frac{L(1 + L^2\eta^2)}{b}\sum_{t=0}^m\gamma_t\sum_{i=0}^{t-1}\beta^{2(t-i)}\gamma_i^2\Exp{\norms{\widehat{x}_{i+1}^{(s)} - x_i^{(s)}}^2} \vspace{1ex}\\
&& - {~} \sum_{t=0}^m\gamma_t\left(\frac{2}{\eta} - 2L - L\gamma_t\right)\Exp{\norms{\widehat{x}_{i+1}^{(s)} - x_i^{(s)}}^2}.
\end{array}
}
Similar to the proof of \eqref{eq:update_of_eta_t}, if we choose $\eta \in (0, \frac{1}{L})$, set $\delta := \frac{2}{\eta} - 2L > 0$, and update $\gamma$ as in \eqref{eq:step_size_gamma_t}:
\myeqn{
\gamma_m := \frac{\delta}{L}~~~~\text{and}~~~~\gamma_t := \frac{\delta b}{Lb + L(1 + L^2\eta^2)\big[\beta^2\gamma_{t+1} + \beta^4\gamma_{t+2} + \cdots + \beta^{2(m-t)}\gamma_m\big]},
}
then $\widehat{\Tc}_m \leq 0$.
Moreover, since $\beta \in [0, 1]$ and $1+L^2\eta^2 \leq 2$, \eqref{eq:co51_est2} can be simplified as
\myeqn{ 
\Exp{F(\overline{x}^{(s)})} \leq \Exp{F(\overline{x}^{(s-1)})} -  \frac{L\eta^2}{4}\displaystyle\sum_{t=0}^m\gamma_t\Exp{\Vert \Grad_{\eta}(x_t^{(s)})\Vert^2}  +  \frac{\sigma^2}{L}\left[\frac{1}{\tilde{b}(m+1)(1-\beta)}  + \frac{(1-\beta)}{\hat{b}}\right]\Sigma_m.
}
Summing up this inequality from $s := 1$ to $s := S$ and noting that $F(\overline{x}^{(S)}) \geq F^{\star}$, we obtain
\myeq{eq:co51_est3}{
{\!\!\!\!}\frac{1}{S\Sigma_m}\displaystyle\sum_{s=1}^S\sum_{t=0}^m\gamma_t\Exp{\Vert \Grad_{\eta}(x_t^{(s)})\Vert^2} \leq \frac{4\big[F(\overline{x}^{(0)}) - F^{\star}\big] }{L\eta^2S\Sigma_m}+  \frac{4\sigma^2}{L^2\eta^2}\left[\frac{1}{\tilde{b}(m+1)(1-\beta)}  + \frac{(1-\beta)}{\hat{b}}\right].{\!\!\!}
}
Let us first choose $\beta := 1 - \frac{\hat{b}^{1/2}}{\tilde{b}^{1/2}(m+1)^{1/2}}$.
Then, $1 - \beta^2 \leq \frac{2\hat{b}^{1/2}}{\tilde{b}^{1/2}(m+1)^{1/2}}$ and $\frac{(1+L^2\eta^2)\beta^2}{L} \leq \frac{2}{L}$.
Using these inequalities, similar to the proof of \eqref{eq:deno_est1}, we can upper bound 
\myeqn{
L\left[\sqrt{1-\beta^2} + \sqrt{1 - \beta^2 + \frac{4(1+L^2\eta^2)\beta^2\delta}{Lb}}\right] \leq 2\sqrt{2}\left[\frac{L\hat{b}^{1/4}b^{1/2} {~} + {~} [\tilde{b}(m+1)]^{1/4} \sqrt{L\delta}}{b^{1/2}[\tilde{b}(m+1)]^{1/4}}\right].
}
Using this bound, the update rule \eqref{eq:step_size_gamma_t} of $\gamma_t$, and $\sqrt{1-\beta^2} \geq \frac{\hat{b}^{1/4}}{[\tilde{b}(m+1)]^{1/4}}$, we apply Lemma~\ref{le:adaptive_step_size} with $\omega := \beta^2$ and $\epsilon := \frac{(1 + L^2\eta^2)}{Lb}$ to obtain
\myeqn{
\Sigma_m := \sum_{t=0}^m\gamma_t \geq \frac{\delta(m+1)\sqrt{1-\beta^2}}{L\Big[\sqrt{1-\beta^2} + \sqrt{1 - \beta^2 + \frac{4(1+L^2\eta^2)\beta^2\delta}{Lb}}\Big]}
\geq \frac{\delta(m+1)\hat{b}^{1/4}b^{1/2}}{2\sqrt{2}\left[L\hat{b}^{1/4}b^{1/2} + [\tilde{b}(m+1)]^{1/4} \sqrt{L\delta}\right]}.
}
Utilizing this bound into \eqref{eq:co51_est3} and noting that $\overline{x}_T \sim \Unip{\pb}{\sets{x^{(s)}_t}_{t=0\to m}^{s=1\to S}}$, we can upper bound it as
\myeqn{
\Exp{\Vert \Grad_{\eta}(\overline{x}_T)\Vert^2} \leq \frac{8\sqrt{2}\left[L\hat{b}^{1/4}b^{1/2} + [\tilde{b}(m+1)]^{1/4} \sqrt{L\delta}\right]}{L\eta^2\delta S(m+1)\hat{b}^{1/4}b^{1/2}}\big[F(\overline{x}^{(0)}) - F^{\star}\big] +  \frac{8\sigma^2}{L^2\eta^2[\hat{b}\tilde{b}(m+1)]^{1/2}},
}
which is exactly \eqref{eq:double_loop_est2}.

\vspace{1ex}
\noindent{(b)}~Now, let us choose $\hat{b} = b \in\Nbb_{+}$ and assume that $\tilde{b}  :=  c_1^2b(m+1)$ for some $c_1 > 0$.
In this case, the right-hand side of \eqref{eq:double_loop_est2} can be  upper bounded as
\myeqn{
\begin{array}{lcl}
\Rc_T &:= &  \frac{8\sqrt{2}\left[Lb^{3/4} {~}+{~} [\tilde{b}(m+1)]^{1/4} \sqrt{L\delta}\right]}{L\eta^2\delta S(m+1)b^{3/4}}\big[F(\overline{x}^{(0)}) - F^{\star}\big] +  \frac{8\sigma^2}{L^2\eta^2 c_1b(m+1)} \vspace{1ex}\\
&= & \frac{8\sqrt{2}\Delta_F}{\eta^2\delta S(m+1)} + \frac{8\sqrt{2 c_1}\Delta_F}{\sqrt{L\delta}\eta^2 S(m+1)^{1/2}b^{1/2}} +  \frac{8\sigma^2}{L^2\eta^2 c_1b(m+1)},
\end{array}
}
where $\Delta_F := F(\overline{x}^{(0)}) - F^{\star} > 0$.

For any $\varepsilon > 0$, to guarantee $\Exp{\Vert \Grad_{\eta}(\overline{x}_T)\Vert^2} \leq \varepsilon^2$, we impose $\Rc_T \leq \varepsilon^2$.
From the upper bound of $\Rc_T$, we can break its relaxed condition into three parts as
\myeq{eq:three_parts}{
\frac{8\sqrt{2}\Delta_F}{\eta^2\delta S(m+1)} \leq \frac{\varepsilon^2}{3},~~~~~\frac{8\sqrt{2 c_1}\Delta_F}{\sqrt{L\delta}\eta^2S(m+1)^{1/2}b^{1/2}} = \frac{\varepsilon^2}{3},~~~~~\text{and}~~~~ \frac{8\sigma^2}{L^2\eta^2 c_1b(m+1)} \leq \frac{\varepsilon^2}{3}.
}
Let us choose $m + 1 := \frac{24}{c_1L^2\eta^2 b\varepsilon^2}\max\set{\sigma^2,1}$.
Then, $\tilde{b} = \frac{24c_1}{L^2\eta^2 \varepsilon^2}\max\set{\sigma^2,1}$.
Moreover, the last condition of \eqref{eq:three_parts} holds and $m+1 \geq  \frac{24}{c_1L^2\eta^2 b\varepsilon^2}$. 
Hence, the second condition of \eqref{eq:three_parts} leads to 
\myeqn{
S = \frac{24\sqrt{2 c_1}\Delta_F}{\sqrt{L\delta}\eta^2 \varepsilon^2}\frac{1}{\sqrt{b(m+1)}} \leq \frac{4\sqrt{3L}c_1 \Delta_F}{\sqrt{\delta}\eta  \varepsilon}.
}
From the second condition of \eqref{eq:three_parts}, we also have 
\myeqn{
(m+1)bS = \frac{24\sqrt{2c_1}\Delta_F}{\eta^2\sqrt{L\delta} \varepsilon^2}(m+1)^{1/2}b^{1/2} = \frac{96\sqrt{3}\Delta_F}{\eta^3L\sqrt{L\delta} \varepsilon^3}\max\set{1,\sigma}.
}
From this expression,  to guarantee the first condition of \eqref{eq:three_parts}, we need to impose 
\myeqn{
(m+1)S = \frac{96\sqrt{3}\Delta_F}{\eta^3L\sqrt{L\delta} b\varepsilon^3}\max\set{1,\sigma} \geq \frac{24\sqrt{2}\Delta_F}{\eta^2\delta\varepsilon^2}, 
}
which leads to $1\leq b \leq \frac{2\sqrt{6\delta}}{L\sqrt{L}\eta\varepsilon}$.

Finally, the total number of stochastic gradient evaluations $\nabla{f}_{\xi}(x_t)$ is at most 
\myeqn{
\begin{array}{lcl}
\Tc_{\nabla{f}} &:= & \left[\tilde{b} + 3b(m+1)\right]S = \left\lfloor (c_1^2 + 3)b(m+1)S \right\rfloor \vspace{1ex}\\
&= & \left\lfloor (c_1^2 + 3)\frac{96\sqrt{3}\Delta_F}{\eta^3L\sqrt{L\delta} \varepsilon^3}\max\set{1,\sigma} \right\rfloor.
\end{array}
}
The total number of proximal operations $\prox_{\eta\psi}$ is at most $\Tc_{\prox} = (m+1)S = \left\lfloor \frac{96\sqrt{3}\Delta_F}{\eta^3L\sqrt{L\delta} b\varepsilon^3}\max\set{1,\sigma} \right\rfloor$.
\Eproof

\bibliographystyle{plain}

\end{document}